\newcounter{braid}
\newcounter{strands}
\DeclareMathAlphabet{\bsf}{OT1}{cmss}{bx}{n}
\def\cross{%
  \@ifnextchar^{\message{Got sup}\cross@sup}{\cross@sub}}
\def\cross@sup^#1_#2{\render@cross{#2}{#1}}
\def\cross@sub_#1{\@ifnextchar^{\cross@@sub{#1}}{\render@cross{#1}{1}}}
\def\cross@@sub#1^#2{\render@cross{#1}{#2}}
\def\render@cross#1#2{
  \def\strand{#1}
  \def\crossing{#2}
  \pgfmathsetmacro{\cross@y}{-\value{braid}*\braid@h}
  \pgfmathtruncatemacro{\nextstrand}{#1+1}
  \foreach \thread in {1,...,\value{strands}}
  {
    \pgfmathsetmacro{\strand@x}{\thread * \braid@w}
    \ifnum\thread=\strand
    \pgfmathsetmacro{\over@x}{\strand * \braid@w + .5*(1 - \crossing) * \braid@w}
    \pgfmathsetmacro{\under@x}{\strand * \braid@w + .5*(1 + \crossing) * \braid@w}
    \draw[braid] \pgfkeysvalueof{/tikz/braid start} +(\under@x pt,\cross@y pt) to[out=-90,in=90] +(\over@x pt,\cross@y pt -\braid@h);
    \draw[braid] \pgfkeysvalueof{/tikz/braid start} +(\over@x pt,\cross@y pt) to[out=-90,in=90] +(\under@x pt,\cross@y pt -\braid@h);
    \else
    \ifnum\thread=\nextstrand
    \else
     \draw[braid] \pgfkeysvalueof{/tikz/braid start} ++(\strand@x pt,\cross@y pt) -- ++(0,-\braid@h);
    \fi
   \fi
  }
  \stepcounter{braid}
}
\tikzset{braid/.style={double=\pgfkeysvalueof{/tikz/braid colour},double distance=1pt,line width=2pt,white}}
\newcommand{\braid}[2][]{%
  \begingroup
  \pgfkeys{/tikz/strands=2}
  \tikzset{#1}
  \pgfkeysgetvalue{/tikz/braid width}{\braid@w}
  \pgfkeysgetvalue{/tikz/braid height}{\braid@h}
  \setcounter{braid}{0}
  \let\sigma=\cross
  #2
  \endgroup
}
\newtheorem{theorem}{Theorem}
\newtheorem{proposition}[theorem]{Proposition}
\newtheorem{lemma}[theorem]{Lemma}
\newtheorem{corollary}[theorem]{Corollary}
\newtheorem{definition}[theorem]{Definition}
\def\dash{\text{-}}
\def\Z{\mathbb{Z}}
\def\C{\mathbb{C}}
\def\Q{\mathbb{Q}}
\def\R{\mathbb{R}}
\def\C{\mathbb{C}}
\def\N{\mathbb{N}}
\def\Pi{\mathbb{P}^{\infty}}
\def\qed{\hfill$\square$\medskip}
\def\Zpk{\mathbb{Z}/p^{k}}
\def\Zpk1{\mathbb{Z}/p^{k-1}}
\newcommand{\rref}[1]{(\ref{#1})}
\newcommand{\beg}[2]{\begin{equation}\label{#1}#2\end{equation}}
\def\r{\rightarrow}
\def\mc{\mathscr{C}}
\def\sl2{\widetilde{SL_{2}(\Z)}}
\title[Derived representation theory]{Derived representation theory of Lie algebras and stable homotopy categorification 
of $sl_k$}
\author{Po Hu, Igor Kriz and Petr Somberg}
\thanks{The authors acknowledge support by grant GA\,CR P201/12/G028
and NSF grants DMS 1102614, DMS 110434. The second author was also supported by 
Simons Collaboration Grant 403297}
\begin{document}

\begin{abstract}
We set up foundations of representation theory over $S$, the sphere spectrum, which is 
the ``initial ring'' of stable homotopy theory. 
In particular, we treat $S$-Lie algebras and their representations, characters, $gl_n(S)$-Verma modules
and their duals, Harish-Chandra pairs and Zuckermann functors. 
As an application, we construct a Khovanov $sl_k$-stable homotopy
type with a large prime hypothesis, which is a new link invariant, using a stable
homotopy analogue of the method of J.Sussan. 
\end{abstract}

\maketitle

\tableofcontents

\section{Introduction}

The primary result of this paper is to set up the foundations, and do some basic computations in representation
theory over the sphere spectrum $S$. The motivation which guided the present investigation originated in 
knot theory. An important new direction of homotopy theory called {\em categorification} was started around
the year 2000 by
Khovanov \cite{khovanov} with a categorification of the Jones polynomial, which became known as
Khovanov homology. This invariant is easily defined directly (for a particularly neat definition, see D.Bar-Natan
\cite{barnatan}), but actually comes from the categorification of representations of $sl_2$ (see Bernstein,
Frenkel, Khovanov \cite{bfk}). Here ``categorification'' means the introduction of a certain categories 
of chain complexes whose
Grothendieck groups, tensored with $\C$, are the representations in question. Morphisms of representations
correspond to functors, and as a result, instead of a ``knot polynomial'', we obtain a chain complex whose 
homology is Khovanov homology of the given type. Many more categorifications appeared since. From the point
of view of the present paper, the most important one is the paper \cite{sussan} by Joshua Sussan, which
gave, by analogy with \cite{bfk}, a categorification of a certain part of the representation theory of $sl_k$,
thus defining what became known as {\em $sl_k$-Khovanov homology}.

\vspace{3mm}
A major development in knot theory, and a major connection between that field and homotopy theory, 
was started in 2012 in the paper \cite{lsarkar} by Robert Lipshitz and 
Sucharit Sarkar, in which the authors defined what they termed an {\em $sl_2$-Khovanov stable homotopy type}, i. e. a 
spectrum $\mathcal{X}(L)$ for a link diagram $L$, whose homology is Khovanov homology of $L$. A different construction,
using an abstract approach to topological quantum field theory, was obtained shortly afterward
by Hu, I.Kriz and D.Kriz
in \cite{hkk}. 
Lipshitz and Sarkar \cite{lsteenrod} also showed that the $sl_2$-Khovanov stable homotopy type produces non-trivial
spectra in the sense of stable homotopy theory.
The equivalence of the constructions of \cite{lsarkar} and \cite{hkk} was recently proved by T. Lawson, R.Lipshitz and S.Sarkar in \cite{lls}, see also \cite{llsc}, with further applications to knot theory. 

\vspace{3mm}
The constructions \cite{lsarkar, hkk, lls, llsc} of the $sl_2$-Khovanov stable homotopy type all substantially
use the fact that the construction of $sl_2$-Khovanov homology is completely elementary, and uses no 
non-trivial representation theory. This is due to the fact that the representation theory of $sl_2$ is very simple. 
The authors of the present paper set out to define an $sl_k$-Khovanov stable
homotopy type by finding a stable homotopy analogue of J.Sussan's method \cite{sussan}. 
This is the main result of the present paper. There are several limitations of this result, which we need to mention. 
First of all, in this paper, we do not make any knot computations: we felt that the very deep connection
between concepts of stable homotopy theory and representation theory we needed to probe were enough for
the subject of one paper, and we postponed any knot computations to future work. The other limitation
which should be mentioned concerns any knot stable homotopy type, including the $sl_2$-Khovanov homotopy type. 
Khovanov homology actually has another grading, making it more precisely a categorification of the
representation theory of a quantum group. This structure, which is also established in \cite{sussan}, appears to be lost
by lifting to stable homotopy theory. Accordingly, also the present paper only concerns the stable 
homotopy categorification 
of $sl_k$-representations, and not the corresponding quantum group. Finally, and this is perhaps the
most material restriction is that we work at a (linearly) large prime with respect to 
$k$. This is needed to 
remove some difficulties specific to modular representation theory, which at present we do not know how to address 
in stable homotopy theory, although we feel that, of course, the modular story will ultimately be the most interesting.
Still, the large prime story is non-trivial to set up, and the case $k=2$ shows that it is also non-trivial, as there
are knots with odd torsion in their Khovanov homology (the smallest one being the
$(5,6)$-torus knot). 

\vspace{3mm}
Finally, it should be mentioned that there are other possible approaches to an $sl_k$-homotopy type. For example,
$sl_k$-Khovanov homology was also constructed by Khovanov and Rozansky \cite{khovr,khovr2} using matrix 
factorization. Efforts to construct an $sl_k$-stable homotopy type, i. e. a spectrum whose homology is 
$sl_k$-Khovanov homology, using this method were 
made in \cite{schutz}. Our present program of a construction using representation theory over $S$ took off in 2014
after conversations with Jack Morava. The concept of representation theory over $S$ is of independent interest
(see for example Lurie \cite{lurie}; there is also more recent unpublished joint work of Gaitsgory and Lurie).

\vspace{3mm}
To describe what type of mathematics we get into when constructing a Khovanov $sl_k$-homotopy type, 
we first describe the approach of Sussan \cite{sussan}, which follows the method of Bernstein, Frenkel and
Khovanov \cite{bfk} for the case of
$k=2$. Their approach is to consider the derived category of the Bernstein-Gelfand-Gelfand (BGG) category 
$\mathcal{O}$ of representations of $gl_n$,
cf. \cite{humphreys}. Here $n$ is another, typically larger natural number without direct relationship to $k$. In
the context of Khovanov homology, it represents, roughly, the number of strands of a horizontal slice of a 
link projection into the plane. The main feature of the BGG category $\mathcal{O}$
is that it is really a category of complex
representations of {\em Harish-Chandra pairs} $(H,gl_n)$ where $H$ is the exponentiation of a Cartan subalgebra,
say, consisting of the diagonal matrices. In the context of (derived) BGG, we further restrict to certain finite 
complexes of special objects called {\em Verma and co-Verma modules}, which is the right approach from the point
of view of categorification. This restriction, however, is actually less important, since it does not change $Ext$-groups (while passing
from representations of Lie algebras to Harish-Chandra pairs does change $Ext$-groups by ``rigidifying'' the action
of $H$). 

To get link invariants, we must actually go somewhat further, considering {\em parabolic BGG categories}
$\mathcal{O}_{\mathbf{p}}$
with respect to a parabolic Lie algebra $\mathbf{p}\subseteq gl_n$.
These are certain full subcategories of the categories of representations of Harish-Chandra pairs $(L,gl_n)$
where $L$ is the exponentiation of the Levi factor of the parabolic $\mathbf{p}$. The parabolic BGG categories 
decompose into {\em blocks}, and taking blocks with certain prescribed weights gives a categorification
of tensor products of exterior powers of the standard $k$-dimensional representation of $gl_k$, with 
morphisms of representations categorified by {\em left and right Zuckermann functors}, which are adjoints
to forgetful functors between Harish-Chandra pairs. To get a link complex of a tangle, one takes a {\em Khovanov cube}
of functors associated with a link projection (where, like in the original Khovanov homology, each crossing
is a cone, thus responsible for one additional dimension of the cube). If we have a link instead of a tangle,
we just get a chain complex, and its cohomology is $sl_k$-Khovanov homology. (Again, to be completely precise,
Sussan \cite{sussan} works with a quantum group instead of $gl_n$, thereby producing an additional grading,
but this structure is lost in stable homotopy.)

\vspace{3mm}
To obtain a stable homotopy version of the construction of \cite{sussan} we just described, one needs
to develop an analogue of the above described $\C$-valued representation theory over $S$, the sphere spectrum. 
Of course, even over $\Z$, the representation theory does not work nearly as nicely as over $\C$ (see \cite{Zforms}).
We circumvent this difficulty by working over {\em a large prime}. This means a prime linearly larger than 
the number $k$. It is important to note that we cannot assume to be at a prime larger than $n$, since $n$ depends
on the size of the knot or link. While working at a large prime is of course a major and undesirable restriction,
it allows non-trivial results over $\Z$, as seen in the examples of odd torsion in Khovanov homology. 

\vspace{3mm}
Over $S$, on the other hand, even at a large prime, we face a formidable array of technical and conceptual 
challenges, and in some sense, resolving them is the main contribution of the present paper. First, we must
recall what $S$ is - the stable sphere, which can also be thought of as the ``absolute generalized homology theory'',
or, as a homotopy theorist would say, {\em the sphere spectrum}.
It became clear in the 1990's that one can do algebra over $S$ by developing an analogue of the tensor product
of chain complexes which is commutative, associative and unital. The resulting field that began to open up
was called by Peter May {\em spectral algebra}, and later by Jacob Lurie {\em derived algebraic geometry}. 

\vspace{3mm}
The difficulty with spectra (i.e., a rigid category of generalized (co) homology theories) is that the geometric
model of the shift, which is compatible with the tensor product, is the {\em suspension},
which involves a choice of an intrinsically non-canonical coordinate. Because of this, the naive approach 
of the analogue of
the tensor product in spectra (which realizes products
in (co)homology theories, and was dubbed, by Frank Adams, the {\em smash product}),  lacks strict commutativity
and associativity, thereby restricting severely the type of algebraic constructions one could do. This is why
a highly technical procedure, developing a symmetric monoidal smash product, is needed before one can
do spectral algebra, derived algebraic geometry,
or representation theory over $S$. One such construction was given in \cite{ekmm},
but as is often the case in similar situations, other approaches emerged as well, and their equivalence was later
proved in \cite{scompare}.

\vspace{3mm}
In this paper, we use the foundations of \cite{ekmm}, which have certain technical advantages from
our point of view (for cognoscenti, the advantage is that in the Quillen model structure,
{\em every object is fibrant}). The first step toward representation theory is to define Lie algebras over $S$. That
was actually done by M.Ching \cite{ching} who constructed a stable homotopy {\em Lie operad}, using 
the Goodwillie derivative of the identity. It turns out that the concept of a Lie algebra representation that we need
is provided by the concept of {\em operad module}. This requires some thought because an analogous statement
is true for some operads and not others: the right concept of a module over an $E_\infty$-algebra
is an operad module,  while the operad module for associative algebra is a {\em bimodule}. A certain 
modification is needed to get left and right modules, and this is something we must come to terms with in
the present paper as well. 

\vspace{3mm}
Now that we know what a Lie algebra and a representation is, we need examples. Lie algebras, at least
from the point of view of what we need to model \cite{sussan}, are not a problem: $gl_n$, as well
as all its parabolic subalgebras, turn out to be easy, since they arise from a structure of {\em associative algebras}. 
The forgetful functor from associative algebras to Lie algebras has an analogue in \cite{ching}. (Therefore,
we also have its left adjoint, which is the $S$-analogue of the {\em universal enveloping algebra}.)
Examples of representations are less easy. While we have the ``standard'' representations, 
and they are important, to define Verma and co-Verma modules, we also need to model the concept of a {\em character}
of an ``abelian'' Lie algebra (namely the Cartan subalgebra of $gl_n$). An abelian Lie algebra is one which
comes not only by forgetting structure from an associative algebra, but from a {\em commutative} algebra.
Classically, it is obvious that the Lie bracket is then $0$, which is needed in defining
characters. 

\vspace{3mm}
Over $S$, the ``vanishing of the Lie bracket'', in the appropriate operadic sense, for an abelian Lie algebra,
is a non-trivial theorem. To prove it, we in fact make another construction of the Lie operad, modeling, 
over $S$, the {\em Koszul dual} of the infinite little cube operad. This brings out another issue: classically,
Koszul duality involves {\em shifts}. The Koszul dual of the little cube operad is its shift. (Unless we shift back,
we obtain a graded Lie bracket which itself has a non-zero degree.) In chain complexes, we may, of course,
shift at will, but can we do that with operads over $S$, given that the shift is replaced by the much less
well behaved suspension?
Luckily, for this, too, a nascent technique was developed by Arone and Kankaanrinta \cite{ak}. In this
paper, we develop it fully, showing, in fact, that at least on the level of derived categories, operads over $S$
are {\em stable} in the sense that shifts can be modeled by equivalences in categories.

\vspace{3mm}
Having constructed characters of abelian Lie algebras over $S$, we now have Verma and co-Verma modules. Unrestricted
groups of derived morphisms of Verma and co-Verma modules over $S$, however, are quite ``wild'', involving
Spanier-Whitehead duals of very infinite spectra, and we do not have 
computations. The situation is somewhat better in a variant of our construction, which is {\em graded by weights}.
There, some computations are possible using Carlsson's solution \cite{carlsson} of the Segal conjecture.
Still, those are not the groups we need for knot theory.

\vspace{3mm}
Next, Harish-Chandra pair enter. This concept of pairs of a Lie algebra, and compatible group in some sense (Lie
or algebraic), occurs in several areas of representation theory. Although 
we do not have Lie groups over $S$, but it turns out that we do have algebraic groups, at least in
a suitably weak sense: We can define a meaningful concept of a commutative Hopf algebra, and we can
invert a homotopy class. From this point of view, we have a model of $\mathscr{O}_{GL_nS}$.
To have Harish-Chandra pairs, however, we need to model a co-action of such ``$S$-algebraic groups''
on a Lie algebra. This is, at present, a problem, since we do not know of a way of rigidifying sufficiently the 
Hopf algebra {\em conjugation} (also called {\em antipode}), 
which, for $GL_n$, models the inverse matrix. We have a workaround
with a large prime hypothesis (which is good enough, since the $n$ here is actually not the $n$ mentioned earlier -
we only need Harish-Chandra pairs where the group is a product of $GL_\ell$'s for $\ell\leq k$).

\vspace{3mm}
Once the large prime hypothesis is adopted, the theory of Harish-Chandra pairs, at least
on blocks of limited weights, behaves well, as one may basically refer to characteristic $0$ for 
calculations. We managed to define analogues of Zuckermann functors, model a stable homotopy analogue of
the Khovanov cube, and also formalize Sussan's {\em diagram relations} to a point where they can be reduced
to existence of dualizing objects, which, again, follows from the characteristic $0$ case. Thus, we have 
constructed an $sl_k$ Khovanov homology stable homotopy type, at a prime (linearly) large with respect to $k$,
and prove its link invariance.

\vspace{3mm}
The present paper is organized as follows: In Section \ref{sbs}, we introduce the stable homotopy foundations
we work with. In Section \ref{slierep}, we introduce $S$-Lie algebras and their representations. In Section 
\ref{sgln}, we focus on the example of $gl_nS$, and define Verma modules and some other examples
of interest. In Section \ref{salgghc}, we define Harish-Chandra pairs and Zuckermann functors. 
In Section \ref{skhovanov}, we apply this to the construction of the $sl_k$ Khovanov homotopy type, and proof of
its link invariance.

\vspace{3mm}

\section{Operads in based spaces and spectra}
\label{sbs}

\subsection{Basic setup and model structure}

Let $\mathscr{B}$ be the category of based spaces with the usual smash product $\wedge$ with
the usual model structure where equivalences are weak equivalences and cofibrations are retracts of 
relative cell complexes. We need a category $\mathscr{S}$ of
{\em spectra} with the following properties:
\begin{enumerate}
\item There is a model structure for which every object is fibrant, and a suspension spectrum functor 
$\Sigma^{\infty}: \mathscr{B}\r \mathscr{S}$ which
is a left Quillen adjoint. (We will often tend to omit this functor from the notation.) We want this model 
structure to be defined via the ``small object" method that appears commonly in stable homotopy theory, by 
which we mean that cofibrantions are obtained as retracts of relative cell complexes. 
Further, there is a notion
of geometric homotopy of morphisms such that cofibrant objects are co-local with respect to the geometric 
homotopy category (given by the same objects and geometric homotopy classes of morphisms).
\item There is a commutative, associative and unital smash product $\wedge$ in $\mathscr{S}$ 
such that the suspension
spectrum functor preserves the smash product.
\end{enumerate}
One example of such a category $\mathscr{S}$ is constructed in \cite{ekmm}, which we summarize here. 
Let $\mathcal{S}$ denote the category of May spectra over a given universe.  
Denoting by $\mathscr{L}\!\mathscr{I}$ the linear isometries 
operad on a given universe, one first constructs the category $\mathcal{S}[\mathscr{L}\!\mathscr{I}]$ of 
$\mathscr{L}\!\mathscr{I}(1)$-modules, 
whose objects are spectra $X$ together with an action
$$\mathscr{L}\!\mathscr{I}(1)\rtimes X\r X,$$
and morphisms are morphisms of spectra which preserve this structure.
Denoting by $LI(1)$ the monad defining $\mathscr{L}\!\mathscr{I}(1)$-modules, we can consider the monad 
$(LI(1),LI(1))$ on 
pairs of spectra. Then there is a right $(LI(1),LI(1))$-module $LI(2)$ with 
$$LI(2)(X,Y)= \mathscr{L}\!\mathscr{I}(2)\rtimes (X\wedge Y)$$
(where on the right hand side, $\wedge$ denotes the external smash product). We then define the smash product
in $\mathcal{S}[\mathscr{L}\!\mathscr{I}]$ as
$$X\wedge_{\mathscr{L}\!\mathscr{I}} Y= LI(2)\otimes_{(LI(1),LI(1))} (X,Y)$$
where $\otimes_{(LI(1),LI(1))}$ denotes the coend. This operation is commutative and associative but not
strictly unital. It is important, however, that $S$, the suspension spectrum of $S^0$, is a commutative semigroup with respect
to the operation $\wedge_{\mathscr{L}\!\mathscr{I}}$ in $\mathcal{S}[\mathscr{L\!\mathscr{I}}]$. 

The model structure on $\mathcal{S}[\mathscr{L}\!\mathscr{I}]$ has the property that fibrations resp. equivalences are those morphisms which forget
to fibrations resp. equivalences in spectra. The forgetful functor from $\mathcal{S}[\mathscr{L}\!\mathscr{I}]$ to 
$\mathcal{S}$ is then right Quillen adjoint to $LI(1)$ (thought of as a functor from spectra to 
$\mathcal{S}[\mathscr{L}\!\mathscr{I}]$). The desired suspension
spectrum functor is the composition of the May suspension spectrum functor with $LI(1)$. 

\vspace{3mm}
As remarked, the smash product $\wedge_{\mathscr{L}\!\mathscr{I}}$ is not strictly unital, although 
$$S\wedge_{\mathscr{L}\!\mathscr{I}}S\cong S.$$
This can be remedied by the following further trick: Denote by $\mathscr{S}$ the full subcategory
of $\mathcal{S}[\mathscr{L}\!\mathscr{I}]$ on objects $X$ for which the canonical morphism
$$S\wedge_{\mathscr{L}\!\mathscr{I}} X\r X$$
is an isomorphism. Then we have a functor
\beg{esmodl}{S\wedge_{\mathscr{L}\!\mathscr{I}} (?):\mathcal{S}[\mathscr{L}\!\mathscr{I}]\r\mathscr{S},
}
left adjoint to the functor
\beg{esmodr}{F_{\mathscr{L}\!\mathscr{I}} (S,?):\mathscr{S}\r \mathcal{S}[\mathscr{L}\!\mathscr{I}].
}
(Both functors are restrictions of self-functors on $\mathcal{S}[\mathscr{L}\!\mathscr{I}]$. It is proved in \cite{ekmm},
Chapter VII, that
there is a model structure on $\mathscr{S}$ such that the pair of adjoint functors \rref{esmodl}, \rref{esmodr}
is a Quillen equivalence.

\vspace{3mm}
We will work in the category $\mathscr{S}$. From now on, we will abuse terminology 
by referring to $\mathscr{S}$ as the category of {\em spectra}, and denoting the smash product 
$\wedge_{\mathscr{L}\!\mathscr{I}}$ restricted to $\mathscr{S}$ simply by $\wedge$.

\vspace{3mm}

We briefly recall the definition of operads and their algebras, as well as modules over 
an operad algebra, in the categories of based spaces and of spectra. For the full precise statements of 
the axioms that must be satisfied by the structure morphisms, we ask the reader to look to the references given
below. 
\begin{definition}
\label{d1}
An {\em operad} in $\mathscr{B}$ is a sequence of objects $\mathscr{C}(n)$, $n=0,1,2,\dots$,
and morphisms
$$\gamma:\mc(k)\wedge \mc(n_1)\wedge\dots\wedge \mc(n_k)\r\mc(n_1+\dots+n_k),$$
$$\iota:S^0\r \mathscr{C}(1)$$
and actions of the symmetric group $\Sigma_k$ on $\mc(k)$. These structure morphisms and the 
$\Sigma_k$ action satisfy the usual associativity, unit and equivariance axioms
(\cite{mayg}, Definition 1.1). An operad in $\mathscr{S}$ is defined the same way except that $S^0$ is replaced by $S$.
A {\em $1$-operad} is defined the same way, except that we have $n=1,2,\dots$.

An {\em algebra} over an operad $\mathscr{C}$ in $\mathscr{B}$ or $\mathscr{S}$ is an object $X$ together
with structure maps
$$\mathscr{C}(n)\wedge\underbrace{X\wedge\dots\wedge X}_{n}\r X$$
which satisfy the usual associativity, unit and equivariance axioms (\cite{mayg}, Definition 1.2). For an algebra $R$
over an operad (or $1$-operad) $\mathscr{C}$), a {\em $(\mathscr{C},R)$-module $M$} has structure maps
$$\mathscr{C}(n)\wedge\underbrace{X\wedge\dots\wedge X}_{n-1}\wedge M\r M$$
which satisfy the usual associativity, equivariance and unitality axioms (see e.g. \cite{ekmm}). In the case
of an operad, there is no structure map for $n=0$.

Morphisms of operads, $1$-operads, algebras and modules are (sequences of) morphisms in the underlying category
which preserve the operations.
\end{definition}

\vspace{3mm}
The category of $1$-operads is (canonically equivalent to)
a coreflexive subcategory of the category of operads. The inclusion functor takes
a $1$-operad $\mathscr{C}$ to the operad where we additionally define $\mathscr{C}(0)=*$
(the zero object). The coreflection functor (right adjoint to the inclusion) is given on an operad $\mathscr{C}$
by forgetting $\mathscr{C}(0)$. The category of algebras over a $1$-operad $\mathscr{C}$ is canonically equivalent
to the category of algebras over $\mathscr{C}$ considered as an operad, and similarly for modules.

The category of associative unital algebras (in $\mathscr{B}$ or $\mathscr{S}$) is additionally a
reflexive subcategory of the category of $1$-operads. The inclusion functor is defined, on an associative algebra
$\mathscr{A}$, by putting $\mathscr{A}(1)=\mathscr{A}$, $\mathscr{A}(n)=*$ for $n\geq 2$. The reflection
(left adjoint to inclusion) is defined by sending an operad $\mathscr{C}$ to the associative algebra $\mathscr{C}(1)$.

The category of operads (resp. $1$-operads) has a terminal object $*$ where $*(n)=*$ for all $n$, and
an initial object $\mathscr{E}$ where $\mathscr{E}(1)=S^0$ and $\mathscr{E}(n)=*$ for $n\neq 1$. The
category of $\mathscr{E}$-algebras is canonically equivalent to the underlying category ($\mathscr{B}$
or $\mathscr{S}$).

Additionally, $\Sigma^\infty$ extends to
a functor 
$$\Sigma^\infty:\mathscr{B}\text{-operads}\r\mathscr{S}\text{-operads}.$$
On $\mathscr{B}$ and $\mathscr{S}$, there are model structures where cofibrations are retracts of relative
cell complexes where cells are of the form
\beg{esbs1}{\begin{array}{ll}
S^{n-1}_{+}\r D^{n}_{+},\; n\geq 0 & \text{in $\mathscr{B}$}\\
S^{n-1}_{+}\r D^{n}_{+},\; n\in \Z & \text{in $\mathscr{S}$},
\end{array}
}
equivalences are maps inducing an isomorphism in $\pi_*$. Then $\Sigma^\infty$ is a left Quillen adjoint. Note also that
the smash product of two cofibrant objects in $\mathscr{B}$ or $\mathscr{S}$ is cofibrant. 

Similar comments also apply to $1$-operads, and the pairs of adjunct functors between $1$-operads and operads,
and $1$-operads and associative algebras, are Quillen adjunctions, as usual.

The reason why we consider both operads and $1$-operads is subtle. Concepts such as stability and Koszul duality
work better for $1$-operads. On the other hand, the tensor product of modules over an operad algebra 
works better for operads.

Now we have forgetful functors 
\beg{esbs2}{\begin{array}{l}
?(n):\mathscr{B}\text{-operads}\r\mathscr{B}\\
?(n):\mathscr{S}\text{-operads}\r\mathscr{S}
\end{array}
}
for $n\in \mathbb{N}_0$, with left adjoint $\mathscr{O}(n)$. Then we have model structures on 
$\mathscr{B}$-operads and $\mathscr{S}$-operads where cofibrations are relative cell complexes where cells
are $\mathscr{O}(n)$ of cells of the form \rref{esbs1}, and equivalences are morphisms of operads which 
become equivalences upon applying \rref{esbs2} for every $n$. Then $\Sigma^\infty$ extends to a left 
Quillen adjoint
$$\mathscr{B}\text{-operads}\r\mathscr{S}\text{-operads}.$$
For the purposes of homotopy theory, we will only consider algebras over cofibrant operads. For a cofibrant operad $\mathscr{C}$
(in $\mathscr{B}$ or $\mathscr{S}$),
there is a model structure on $\mathscr{C}$-algebras where equivalences and fibrations are the morphisms which
have the same property in the underlying category. The functor $\Sigma^\infty$ then takes $\mathscr{C}$-algebras
to $\Sigma^\infty\mathscr{C}$-algebras, and is a left Quillen adjoint.

Suppose $f:\mathscr{C}\r \mathscr{D}$ is a morphism of operads in $\mathscr{B}$ or $\mathscr{S}$,
where $\mathscr{C}$ and $\mathscr{D}$ are cofibrant. Then every $\mathscr{D}$-algebra $X$ is automatically 
a $\mathscr{C}$-algebra. We will denote this functor from $\mathscr{D}$-algebras to $\mathscr{C}$-algebras by
$f^*$. It is a right Quillen adjoint to a functor which we will denote by $f_\sharp$.

\vspace{3mm}
\subsection{Stability}

We will need to use suspensions and desuspensions of operads in the category of spectra. Recall that 
in the category of chain complexes, there is a suspension functor $?[1]$ on $1$-operads given by
$$\mathscr{C}[1](n)=\mathscr{C}(n)[n-1]$$
where the brackets on the right hand side mean ordinary suspension in the category of chain complexes:
$$C[k]_n = C_{n-k}.$$
For an $1$-operad $\mathscr{C}$, there is then an equivalence of categories
$$\mathscr{C}\text{-algebras}\r\mathscr{C}[1]\text{-algebras},$$
$$X\mapsto X[-1].$$
We shall prove an analogous result for spectra, at least on the level of derived (i.e. Quillen homotopy) categories
of algebras over cofibrant $1$-operads.

In order to define suspension of operads in $\mathscr{S}$,
we use the Arone-Kankaanrantha (AK) $1$-operad $\mathscr{J}$ in $\mathscr{B}$ (see \cite{ak}) as a 
model of the sphere. This operad has
$$\mathscr{J}(n)=\Delta^{n-1}/\partial\Delta^{n-1}$$
where $\Delta^{n-1} $ is the standard $(n-1)$-simplex. The barycentric coordinates are written as
$[t_1,\dots,t_n]$, the symmetric group $\Sigma_n$ acts by permutation of coordinates, and composition is given by
$$\begin{array}{l}
[s_1,\dots,s_k]\times [t_{1,1},\dots,t_{1,n_1}]\times\dots
\times [t_{k,1},\dots,t_{k,n_k}]\\
\mapsto
[s_1t_{1,1},\dots,s_{1}t_{1,n_1},\dots,s_k t_{k,1},\dots, s_k t_{k,n_k}].
\end{array}
$$
It is also a co-$1$-operad, with structure map
$$\mathscr{J}(n_1+\dots+n_k)\r\mathscr{J}(k)\wedge\mathscr{J}(n_1)\wedge\dots\wedge \mathscr{J}(n_k)$$
given by
$$
\begin{array}{l}
[s_{1,1},\dots,s_{1,n_1},\dots,s_{k,1},\dots,s_{k,n_k}]\mapsto\\ \protect
[s_{1,1}+\dots+s_{1,n_1},\dots,s_{k,1}+\dots+s_{k,n_k}]\times\\[2ex]\protect
\displaystyle
\left[
\frac{s_{1,1}}{s_{1,1}+\dots+s_{1,n_1}},\dots,\frac{s_{1,n_1}}{s_{1,1}+\dots+s_{1,n_1}}\right]\times\\[2ex]
\displaystyle
\displaystyle\dots\times
\left[
\frac{s_{k,1}}{s_{k,1}+\dots+s_{k,n_k}},\dots,\frac{s_{k,n_k}}{s_{k,1}+\dots+s_{k,n_k}}\right].
\end{array}
$$
This means that
$$\mathscr{J}=\Sigma^\infty \mathscr{J},$$
$$\mathscr{T}=F(\Sigma^\infty\mathscr{J},S^0)$$
are operads in $\mathscr{S}$. Let $\widetilde{\mathscr{J}}$, $\widetilde{\mathscr{T}}$ be their
cofibrant replacements.

\begin{definition}\label{d2}
An {\em $E_\infty$-operad} (resp. {\em $E_\infty$-$1$-operad})
is a cofibrant operad (resp. $1$-operad) in $\mathscr{S}$ equivalent to $\Sigma^\infty S^0$. (Here $S^0$ stands
for the unique operad (resp.  $E_\infty$-$1$-operad)
in $\mathscr{B}$ whose $n$'th term is $S^0$.) An algebra over an $E_\infty$-operad will be 
also called an {\em $E_\infty$}-algebra.
\end{definition}

\vspace{3mm}

The next lemma shows that $\widetilde{\mathscr{T}}$ is an ``inverse" to $\widetilde{\mathscr{J}}$ as operads 
in $\mathscr{S}$. 

\begin{lemma}
\label{l1}
$\widetilde{\mathscr{J}}\wedge\widetilde{\mathscr{T}} $ is an $E_\infty$-$1$-operad. (The smash product is performed
one term at a time.)
\end{lemma}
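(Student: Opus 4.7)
The plan is to construct a morphism of $1$-operads $\mathscr{J}\wedge\mathscr{T}\r\Sigma^\infty S^0$, built termwise from the Spanier--Whitehead duality pairings $\mathscr{J}(n)\wedge\mathscr{T}(n)\r S$, and show that after cofibrant replacement this is a termwise equivalence of $1$-operads. The termwise smash product $\mathscr{J}\wedge\mathscr{T}$ acquires a $1$-operad structure by combining the operad composition $\gamma$ on $\mathscr{J}$ with the operad composition on $\mathscr{T}=F(\Sigma^\infty\mathscr{J},S)$, the latter being adjoint via function-spectrum duality to the co-$1$-operad structure $\Delta$ on $\mathscr{J}$ written out explicitly in the excerpt.

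The central check is that the termwise evaluations $ev_n\colon \mathscr{J}(n)\wedge\mathscr{T}(n)\r S$ are compatible with the operad compositions on both sides. Unwinding definitions, the relevant square commutes if and only if $\Delta\circ\gamma=\mathrm{id}$ as a self-map of $\mathscr{J}(k)\wedge\mathscr{J}(n_1)\wedge\cdots\wedge\mathscr{J}(n_k)$; this is immediate from the formulas in the excerpt, since applying $\gamma$ to $[s_1,\dots,s_k]\times[t_{i,1},\dots,t_{i,n_i}]_{i=1}^k$ gives $[s_it_{i,j}]_{i,j}$, and applying $\Delta$ recovers the original factors because $\sum_j t_{i,j}=1$ for each $i$. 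This identity, which is the essential algebraic input, immediately implies the operad-map property for the collection $\{ev_n\}$. The symmetric-group equivariance is automatic: the diagonal $\Sigma_n$-action on the source and the trivial action on $S$ make each $ev_n$ invariant, hence equivariant.

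Next I would verify that each $ev_n$ is a weak equivalence. Since $\mathscr{J}(n)=\Delta^{n-1}/\partial\Delta^{n-1}\cong S^{n-1}$ is a finite cell complex, $\Sigma^\infty\mathscr{J}(n)$ is a finite cell spectrum with Spanier--Whitehead dual $\mathscr{T}(n)=F(\Sigma^\infty\mathscr{J}(n),S)\simeq S^{-(n-1)}$, and $ev_n$ is the duality pairing, hence an equivalence. Passing to cofibrant replacements, each $\widetilde{\mathscr{J}}(n)\wedge\widetilde{\mathscr{T}}(n)$ is a smash product of cofibrant spectra, hence cofibrant (as recorded in the excerpt), and is equivalent to $S$ via the induced evaluation; so $\widetilde{\mathscr{J}}\wedge\widetilde{\mathscr{T}}$ is termwise equivalent to $\Sigma^\infty S^0$.

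I expect the main obstacle to lie not in any of these individual checks but in verifying that $\widetilde{\mathscr{J}}\wedge\widetilde{\mathscr{T}}$ is cofibrant as a $1$-operad in the model structure described in the excerpt: termwise cofibrancy in $\mathscr{S}$ does not obviously upgrade to cofibrancy with respect to the operadic cell structure built from $\mathscr{O}(n)$ of cells. One may need either to replace $\widetilde{\mathscr{J}}\wedge\widetilde{\mathscr{T}}$ by a further cofibrant operadic resolution (which remains termwise equivalent to $\Sigma^\infty S^0$ and therefore still qualifies as an $E_\infty$-$1$-operad), or to argue directly from the specific cell structures on $\widetilde{\mathscr{J}}$ and $\widetilde{\mathscr{T}}$ that the termwise smash inherits a compatible operadic cell structure.
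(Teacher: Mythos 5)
Your proof follows essentially the same route as the paper: the key input in both is the commutativity of the square expressing $\Delta\circ\gamma=\mathrm{id}$ on $\mathscr{J}(k)\wedge\mathscr{J}(n_1)\wedge\dots\wedge\mathscr{J}(n_k)$, which produces the evaluation map of $1$-operads $\widetilde{\mathscr{J}}\wedge\widetilde{\mathscr{T}}\r\Sigma^\infty S^0$, and the termwise equivalence then follows from Spanier--Whitehead duality since $\mathscr{J}(n)\simeq S^{n-1}$ is finite. Your closing remark about operadic (as opposed to termwise) cofibrancy is a legitimate technical point that the paper also leaves implicit, and your suggested fix of a further cofibrant operadic replacement is the standard resolution.
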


\begin{proof}
We have the commutative diagram
\beg{esbs+}{\diagram
\mathscr{J}(k)\wedge\mathscr{J}(n_1)\wedge\dots\wedge \mathscr{J}(n_k)\rto\dto &
\mathscr{J}(k)\wedge\mathscr{J}(n_1)\wedge\dots\wedge\mathscr{J}(n_k)\\
\mathscr{J}(n)\rto & \mathscr{J}(n).\uto
\enddiagram
}
(Here, $n = n_1 + \cdots n_k$.)
Tthe horizontal maps
are identities. The left hand side map is the operad structure map: 
$$[s_1,\dots,s_k]\times
[t_{1,1},\dots,t_{1,n_1}]\times\dots\times\protect
[t_{k,1},\dots,t_{k,n_k}]$$
is mapped to
$$\protect[s_1t_{1,1},\dots,s_1 t_{1,n_1},\dots,s_k t_{k,1},\dots, s_k t_{k,n_k}].$$
The right vertical arrow is the co-operad structure map. It is easy to check that this diagram commutes. 

The diagram \rref{esbs+} gives an evaluation map 
\[ \widetilde{\mathscr{J}} \wedge \widetilde{F(\mathscr{\mathscr{J}}, S^0)} \rightarrow \Sigma^{\infty} S^0 \]
which is an equivalence. 
\end{proof}

We remark that actually, for any co-$1$-operad $\mathscr{R}$ in $\mathscr{B}$, 
a commutative diagram of the form
\beg{esbsgen+}{\diagram
\mathscr{J}(k)\wedge\mathscr{J}(n_1)\wedge\dots\wedge \mathscr{J}(n_k)\rto\dto &
\mathscr{R}(k)\wedge\mathscr{R}(n_1)\wedge\dots\wedge\mathscr{R}(n_k)\\
\mathscr{J}(n)\rto & \mathscr{R}(n).\uto
\enddiagram
}
always gives rise to an evaluation map 
\[ \widetilde{\mathscr{J}} \wedge \widetilde{F(\mathscr{R}, S^0)} \rightarrow \Sigma^{\infty} S^0 . \]
\vspace{3mm}

\begin{lemma}
\label{l2}
$\Sigma^\infty S^1$ and the cofibrant replacement $\widetilde{F(\Sigma^\infty S^1, S^0)}$ of

\noindent
$F(\Sigma^\infty S^1, S^0)$ are algebras over $\widetilde{\mathscr{J}}$, $\widetilde{\mathscr{T}}$,
respectively, and
\beg{esbs*}{\Sigma^\infty S^1\wedge \widetilde{F(\Sigma^\infty S^1, S^0)}}
is equivalent to the $E_\infty$-algebra $S^0$.
\end{lemma}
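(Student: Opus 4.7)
The plan is a three-step argument. The first step is to endow $\Sigma^{\infty}S^{1}$ with a $\widetilde{\mathscr{J}}$-algebra structure and $\widetilde{F(\Sigma^{\infty}S^{1},S^{0})}$ with a $\widetilde{\mathscr{T}}$-algebra structure. For the former, we exploit the identity $S^{1}=\mathscr{J}(2)=\Delta^{1}/\partial\Delta^{1}$ together with the operad structure of $\mathscr{J}$: a natural commutative diagram of the form (\ref{esbsgen+}), in which the role of $\mathscr{R}$ is played by an appropriate sphere-like (co-)operad, yields structure maps
\[
\widetilde{\mathscr{J}}(n)\wedge (\Sigma^{\infty}S^{1})^{\wedge n}\r \Sigma^{\infty}S^{1},
\]
whose coherence reduces to the operad and co-operad axioms for $\mathscr{J}$ itself. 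For the $\widetilde{\mathscr{T}}$-algebra structure, we apply the contravariant functor $F(-,S^{0})$ and use that the operad structure on $\mathscr{T}$ is Spanier--Whitehead dual to the co-operad structure on $\mathscr{J}$, then transfer to the cofibrant replacement.

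The second step is formal. Given a $\mathscr{A}$-algebra $X$ and a $\mathscr{B}$-algebra $Y$, the smash product $X\wedge Y$ acquires a canonical algebra structure over the Hadamard product operad $\mathscr{A}\wedge \mathscr{B}$ (defined term-wise), via
\[
(\mathscr{A}(n)\wedge \mathscr{B}(n))\wedge (X\wedge Y)^{\wedge n}\cong \mathscr{A}(n)\wedge X^{\wedge n}\wedge \mathscr{B}(n)\wedge Y^{\wedge n}\r X\wedge Y
\]
after $\Sigma_n$-equivariant rearrangement. Applying this with $X=\Sigma^{\infty}S^{1}$ and $Y=\widetilde{F(\Sigma^{\infty}S^{1},S^{0})}$, combined with Lemma \ref{l1}, produces an $E_{\infty}$-algebra structure on the smash product (\ref{esbs*}).

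Finally, we identify this $E_{\infty}$-algebra with $S^{0}$. The underlying spectrum is equivalent to $S^{0}$ by Spanier--Whitehead duality. To match the $E_{\infty}$-structures, observe that the combined action factors through the evaluation map $\widetilde{\mathscr{J}}\wedge \widetilde{\mathscr{T}}\r \Sigma^{\infty}S^{0}$ from Lemma \ref{l1}; inserted into the action on $\Sigma^{\infty}S^{1}\wedge \widetilde{F(\Sigma^{\infty}S^{1},S^{0})}$, it reduces on the underlying spectrum to the canonical duality pairing $\Sigma^{\infty}S^{1}\wedge F(\Sigma^{\infty}S^{1},S^{0})\r S^{0}$, which is the unit action for $S^{0}$.

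The main obstacle is the first step: explicitly constructing the algebra structures coherently, i.e., verifying associativity, equivariance, and unit axioms under cofibrant replacement. The remaining steps are essentially formal consequences of Lemma \ref{l1} together with the standard compatibility of smash products with operad algebra structures.
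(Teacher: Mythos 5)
Your overall architecture (put a $\widetilde{\mathscr{J}}$-structure on $\Sigma^\infty S^1$ and a dual $\widetilde{\mathscr{T}}$-structure on the function spectrum, smash term-wise, and feed the result through the evaluation map of Lemma \ref{l1}) is the same as the paper's, and your steps 2 and 3 are the formal part that the paper also treats as formal. The genuine gap is in your first step, and it is exactly the point where the paper has to work. You assert that the structure maps exist and that ``coherence reduces to the operad and co-operad axioms for $\mathscr{J}$ itself,'' but the natural candidate maps are not even well-defined based maps. Concretely, the coalgebra-type structure one wants, $S^1\wedge\mathscr{J}(n)\r S^1\wedge\dots\wedge S^1$, $t\times[s_1,\dots,s_n]\mapsto (s_1t,\dots,s_nt)$, fails at the basepoint: when $t=1$ the image $(s_1,\dots,s_n)$ need not be the basepoint of $(S^1)^{\wedge n}$, so this is only a ``partial'' structure (it would be correct if the source circle were modeled as $[0,N]/0\sim N$ with $N\geq n$). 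The dual problem infects the algebra-side formula as well. Identifying $S^1$ with $\mathscr{J}(2)$ does not rescue this, since the operad and co-operad composition maps of $\mathscr{J}$ do not directly produce a map of the required shape $\mathscr{J}(2)\wedge\mathscr{J}(n)\r\mathscr{J}(2)^{\wedge n}$ compatible with the diagram \rref{esbs***}.

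The paper's resolution is to replace $S^1$ by a fattened model $\underline{S}^1=[0,1]\times(0,1]/\!\sim$, introduce an auxiliary $E_\infty$-$1$-operad $\mathscr{C}$ recording a ``shrinking'' parameter, replace $\mathscr{J}$ by $\mathscr{J}\wedge\mathscr{C}$, and arrange the algebra action to be ``non-increasing''; even then the key diagram \rref{esbs***} commutes only up to all higher homotopies, which suffices because everything is taken cofibrant. Since you yourself flag step 1 as ``the main obstacle'' without supplying this (or any) mechanism for producing genuine based, coherent structure maps, the proposal as written does not close the lemma: the construction it rests on does not exist in the naive form you describe.
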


\begin{proof}
Consider the $1$-operad $\mathscr{J}$ and any co-$1$-operad $\mathscr{R}$ satisfying \rref{esbsgen+}.
Purely on the level of spaces, for a space $X$, for $\Sigma^\infty X$ and $F(\widetilde{\Sigma^\infty X},S)$ to have
the structure described, along with a map of algebras
\beg{esbs**}{ev:\Sigma^\infty X\wedge F(\widetilde{\Sigma^\infty X}, S)\r S,
}
it suffices to have an $\mathscr{R}$-algebra structure and an $\mathscr{J}$-coalgebra structure on $X$ together
with a commutative diagram
\beg{esbs***}{\diagram
X\wedge\dots\wedge X\ddrto_{Id}\rto & X\wedge \mathscr{J}(n)\dto\\
&X\wedge \mathscr{R}(n)\dto\\
&X\wedge\dots \wedge X.
\enddiagram
}
For $\mathscr{R}= \mathscr{J}$, we want to construct maps that would satisfy \rref{esbs***}. A first 
attempt would be to use the formulas 
\beg{esbss1}{\begin{array}{c}
\displaystyle
S^1\wedge\dots\wedge S^1 \r S^1\wedge \mathscr{R}(m)\\[2ex]
\protect
\displaystyle
(t_1,\dots,t_m)\mapsto
(t_1,\dots, t_m)\times \left[
\frac{t_1}{t_1+\dots+t_m},\dots,\frac{t_m}{t_1+\dots+t_m}\right]
\end{array}
}
and
\beg{esbss2}{\begin{array}{c}
\displaystyle\protect
S^1\wedge\mathscr{J}(n)\r S^1\wedge\dots\wedge S^1\\[2ex]
\protect
\displaystyle
t\times [s_1,\dots,s_m]\mapsto (s_1t,\dots,s_mt) .
\end{array}
}
Unfortunately, 
$t=1$ in \rref{esbss2} does not necessarily imply that the element goes to the base point. Note that
\rref{esbss2} is a partial structure in the sense that it is correct if we let the model of $S^1$ on the left be
$[0,N]/0\sim N$ where $N\geq m$. 

A better approach, however, is replacing $S^1$ with 
$$\underline{S}^1=[0,1]\times (0,1]/(0,\epsilon)\sim *, (s,\epsilon)\sim *$$ where $s\geq \epsilon$. 
One can define a suitable  $E_\infty$-$1$-operad $\mathscr{C}$ by ``decreasing the coordinate $\epsilon$'', and replace 
$\mathscr{J}$ by $\mathscr{J} \wedge \mathscr{C}$.
Then the
coalgebra structure on $S^1$ can be replaced by a coalgebra structure on $\underline{S}^1$:
$$\underline{S}^1\wedge \mathscr{J}(n)\wedge \mathscr{C}(n)\r \underline{S}^1\wedge\dots\wedge \underline{S}^1 .$$
The action \rref{esbss1}
can then be made to ``not increase". The diagram \rref{esbs***} will be commutative up to all higher homotopies.
Since we use cofibrant replacement everywhere, this is sufficient. 
\end{proof}

Hence, we are able to use smashing with $\widetilde{\mathscr{J}}$ 
as a model for the suspension of operads in $\mathscr{S}$, and use smashing with $\widetilde{\mathscr{T}}$ 
as a model for the desuspension of operads. 

\vspace{3mm}

\subsection{Equivalence of Arone-Kankaanrantha $1$-operads}
\label{seq}
In this subsection, we construct another model $Q$ for the Arone-Kankaanrantha operad $\mathscr{J}$, which will 
use cubes instead of simplices. Therefore, it 
will be better suited toward working with the little cubes operads. 

Consider an $1$-operad $Q$ in $\mathscr{B}$ given by
$$Q(n)=\overline{A}_n/\overline{B}_n,$$
where
$$\overline{A}_n=\{(J_1,\dots,J_n)\mid J_i=[s_i,t_i],\;0\leq s_i<t_i\leq 1\},$$
$$\overline{B}_n=\{(J_1,\dots,J_n)\in\overline{A}_n\mid\text{Interior}(J_1\cap\dots\cap J_n)
=\emptyset\}.$$
Here $\overline{A}_n$, $\overline{B}_n$ are given the induced topology from $\R^{2n}$. Note that
$\overline{B}_n$ is a closed subset of $\overline{A}_n$.
Then $Q$ is a $1$-operad where composition is given by increasing linear homeomorphisms
$$\diagram I\rto^\cong & J_i.\enddiagram$$
It is easily checked that $Q(n)$ has the same homotopy type as $\mathscr{J}(n)$ where $\mathscr{J}$ is the
$1$-operad defined in the last section. However, we will need a stronger statement asserting 
the existence of equivalences compatible with structure maps, whose proof is more difficult:

\begin{proposition}
\label{pseq1}
There is an equivalence of $\mathscr{B}$-$1$-operads
$$Q\sim\mathscr{J}.$$
\end{proposition}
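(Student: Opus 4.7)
The plan is to exhibit an operadic morphism $f\colon\mathscr{J}\to Q$ and verify that it is a levelwise weak equivalence; by the model structure on $\mathscr{B}$-$1$-operads this suffices for operadic equivalence. Define $f_n\colon\mathscr{J}(n)\to Q(n)$ on barycentric coordinates by
\[ f_n([t_1,\dots,t_n]) = ([0,t_1],\dots,[0,t_n]) \pmod{\overline{B}_n}, \]
with boundary points $[t]\in\partial\Delta^{n-1}$ sent to the basepoint. The map is manifestly $\Sigma_n$-equivariant, and operadic compatibility follows because the increasing linear homeomorphism $[0,1]\to[0,s_i]$ is $x\mapsto s_ix$, sending $[0,t_{i,j}]$ to $[0,s_it_{i,j}]$; this matches the $\mathscr{J}$-composition formula $[s_1t_{1,1},\dots,s_kt_{k,n_k}]$.

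To show each $f_n$ is a weak equivalence, I would construct an explicit (not necessarily operadic) homotopy inverse $\phi_n\colon Q(n)\to\mathscr{J}(n)$. Letting $\mu=\max_j s_j$, set
\[ \phi_n((J_i)) = [t_1-\mu,\dots,t_n-\mu] \]
normalized so that the positive coordinates sum to one, with all of $\overline{B}_n$ sent to the basepoint. Substituting $f_n([t])$ yields $\mu=0$, so $\phi_n\circ f_n=\mathrm{id}_{\mathscr{J}(n)}$.

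For the opposite composite, use the straight-line homotopy
\[ h_r((J_i)) = ([(1-r)s_i,\, (1-r)t_i+rv_i]), \]
where $(v_i)=\phi_n((J_i))$. The inequality $(1-r)s_i<(1-r)t_i+rv_i$ rewrites as $(1-r)(t_i-s_i)+rv_i>0$, which holds because both summands are nonnegative and at least one is strict, so $h_r((J_i))\in\overline{A}_n$. Moreover $h_r$ maps $\overline{A}_n\setminus\overline{B}_n$ into itself, since $t_i>\mu$ and $v_i>0$ give $\min_i((1-r)t_i+rv_i)>(1-r)\mu=\max_j(1-r)s_j$, so the homotopy never enters the basepoint from the open stratum. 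At $r=0$ we recover the identity, and at $r=1$ we obtain $f_n\circ\phi_n$, producing the homotopy $f_n\circ\phi_n\simeq\mathrm{id}_{Q(n)}$.

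The main obstacle is the bundle of continuity verifications along $\overline{B}_n$, each of which reduces to controlling the single scalar $\delta((J_i))=\min_it_i-\max_js_j\geq 0$: it vanishes precisely on $\overline{B}_n$ and is comparable to the $\R^{2n}$-distance from $(J_i)$ to $\overline{B}_n$, so $\delta\to 0$ forces convergence to the basepoint of $Q(n)$. For $f_n$ one has $\delta(f_n([t]))=\min_it_i\to 0$ as $[t]\to\partial\Delta^{n-1}$; for $\phi_n$, as $(J_i)\to\overline{B}_n$ the index $i^\ast$ achieving $\min_it_i$ satisfies $t_{i^\ast}-\mu\to 0$, driving the normalized vector into $\partial\Delta^{n-1}$; for $h_r$, the same $i^\ast$ has $v_{i^\ast}\to 0$ and $t_{i^\ast}\to\mu$, so $(1-r)t_{i^\ast}+rv_{i^\ast}\to(1-r)\mu=\max_j(1-r)s_j$, pushing $h_r((J_i))$ into $\overline{B}_n$ in the limit. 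Once continuity is secured via this uniform estimate, the levelwise equivalence $\mathscr{J}(n)\simeq Q(n)$ via $f_n$ is immediate.
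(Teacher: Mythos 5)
There is a genuine gap at the very first step: the map $f_n\colon\mathscr{J}(n)\to Q(n)$, $[t_1,\dots,t_n]\mapsto([0,t_1],\dots,[0,t_n])$, is not continuous at the basepoint. The source of the trouble is that $\overline{B}_n$ is closed in $\overline{A}_n$ but \emph{not compact} (its closure in $\R^{2n}$ contains degenerate configurations with $s_i=t_i$, which are excluded from $\overline{A}_n$), so having the metric distance to $\overline{B}_n$ tend to $0$ does \emph{not} imply convergence to the basepoint in the quotient topology: saturated open neighborhoods of $\overline{B}_n$ in $\overline{A}_n$ may pinch arbitrarily tightly near the degenerate locus. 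Concretely, take $n=2$ and
$$U=\left\{([s_1,t_1],[s_2,t_2])\in\overline{A}_2\;\middle|\;\min(t_1,t_2)-\max(s_1,s_2)<\tfrac{1}{2}\min(t_1-s_1,\,t_2-s_2)\right\}.$$
This is open in $\overline{A}_2$ and contains $\overline{B}_2$ (there the left-hand side is $\leq 0$ while the right-hand side is $>0$), so its image is an open neighborhood of the basepoint of $Q(2)$. But $f_2([1-\epsilon,\epsilon])=([0,1-\epsilon],[0,\epsilon])$ has left-hand side $\epsilon$ and right-hand side $\epsilon/2$, hence lies outside $U$ for every small $\epsilon>0$, even though $[1-\epsilon,\epsilon]\to[1,0]\in\partial\Delta^{1}$. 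Thus $f_2^{-1}(U/\overline{B}_2)$ contains the basepoint of $\mathscr{J}(2)$ but no neighborhood of it. Your ``uniform estimate'' via $\delta=\min_it_i-\max_js_j$ tracks exactly the wrong thing: $\delta\to0$ controls distance to $\overline{B}_n$, not eventual membership in every saturated open neighborhood. The same defect kills the homotopy $h_r$ at $r=1$ (where it equals $f_n\circ\phi_n$) and hence its descent to a based homotopy on $Q(n)$.

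This continuity problem is precisely what the paper's proof is built to avoid. Instead of mapping $\mathscr{J}$ directly into $Q$, the paper interposes a $1$-operad $Q'$ with $Q'(n)=A'_n/B'_n$, where the degenerate configurations $C_n$ (all intervals anchored at $0$, at least one of length zero) are adjoined to both the total space and the collapsed subspace; the canonical map $Q\to Q'$ is then a continuous bijection but \emph{not} a homeomorphism, and proving it is a weak equivalence is the real content of Lemma \ref{lseq1}, via the quasifibration criterion. The comparison with $\mathscr{J}$ is made through $Q'$ and $\mathscr{J}'$, where the analogue of your $f_n$ \emph{is} continuous because the target's topology already identifies the degenerate limits with the basepoint. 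To salvage a direct argument you would have to either change the topology on $Q(n)$ (which changes the operad being compared) or route through such an enlargement, which is what the paper does.
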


The proof of the Proposition will occupy the remainder of this subsection. It is purely topological 
in nature, and readers not interested in the 
proof may skip it without affecting their understanding of the rest of this paper. 

Put
$$A_n=\{(J_1,\dots,J_n)\in \overline{A}_n\mid J_1\cap\dots\cap J_n\neq\emptyset\},$$
$$B_n=A_n\cap \overline{B}_n.$$
Then the canonical continuous map
$$A_n/B_n\r \overline{A}_n/\overline{B}_n$$
is a bijection and is in fact a homeomorphism, as $A_n\subset \overline{A}_n$ is a closed subset.

To prove the Proposition, we shall introduce an ``intermediate" $1$-operad $Q^\prime$. We let
$$Q^{\prime}_{n}= A^{\prime}_{n}/B^{\prime}_{n}$$
where
$$
A^{\prime}_{n}=
A_n\cup C_n,
$$
$$C_n=\{([0,t_1],\dots,[0,t_n]\mid
0\leq t_i\leq 1,\; (t_1,\dots,t_n)\neq (0,\dots,0),\;(\exists i) t_i=0\},$$
$$B^{\prime}_{n}=B_n\cup C_n.$$
Again, $A^{\prime}_{n},B^{\prime}_{n}\subset \R^{2n}$ are given the induced topology.

It is easy to see that $C_n\subseteq B^{\prime}_{n}\subseteq A^{\prime}_{n}$ is a closed
subset (of course, $C_n\cap A_n=\emptyset$,) and $Q^\prime$ is a $1$-operad with structure defined
by the same formula as for $Q$. Furthermore, we have a canonical continuous map of $1$-operads
\beg{eseq1}{\iota:Q\r Q^\prime}
which is a bijection but not necessarily a homeomorphism.

\vspace{3mm}

\begin{lemma}
\label{lseq1}
$\iota$ is a weak equivalence.
\end{lemma}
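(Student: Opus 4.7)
The plan is to prove $\iota$ is a weak equivalence by constructing, at the level of spaces, a continuous map of pairs
\[ \rho : (A'_n, B'_n) \to (A_n, B_n) \]
together with a homotopy $H : A'_n \times I \to A'_n$ from $\mathrm{id}_{A'_n}$ to $\rho$ (viewed through the inclusion $A_n \hookrightarrow A'_n$), satisfying $H_t(B'_n) \subseteq B'_n$, $H_t(A_n) \subseteq A_n$, and $H_t(B_n) \subseteq B_n$ for all $t \in I$. Such data descend to a map $\bar\rho : Q'(n) \to Q(n)$ and homotopies $\iota \circ \bar\rho \simeq \mathrm{id}_{Q'(n)}$ and $\bar\rho \circ \iota \simeq \mathrm{id}_{Q(n)}$, exhibiting $\iota$ as a homotopy equivalence (hence a weak equivalence).

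The main ingredient in constructing $\rho$ is the continuous ``gap function''
\[ g(p) = \min_i t_i - \max_j s_j, \qquad p = ([s_1,t_1],\dots,[s_n,t_n]) \in A'_n, \]
which is non-negative and vanishes precisely on $B'_n = B_n \cup C_n$; indeed, for $p \in C_n$ all $s_i = 0$ and some $t_j = 0$, so $g(p)=0$. Using $g$, I would define $\rho$ by a continuous affine modification of the endpoints,
\[ \rho([s_i,t_i]) = [\alpha(p) s_i + \beta(p),\; \alpha(p) t_i + \beta(p) + \gamma(p)], \]
with continuous $\alpha(p),\beta(p),\gamma(p) \geq 0$ tuned so that the image always lies in $A_n$ (with non-degenerate intervals and nonempty intersection) and $\rho(C_n) \subseteq B_n$, which amounts to the exact balance $\max_i(\alpha s_i + \beta) = \min_i(\alpha t_i + \beta + \gamma)$ after the modification. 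The homotopy $H_t$ is obtained by interpolating $\alpha, \beta, \gamma$ from their identity values ($\alpha=1$, $\beta=\gamma=0$) to the chosen target values, scaled by $t$, and using $g$ to keep the perturbation small away from $B'_n$ so that $H_t$ preserves both $A_n$ and $B_n$.

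The main obstacle lies precisely in the construction of $\rho$: since $A_n$ is open and dense in $A'_n$, $\rho$ cannot be the strict identity on $A_n$, so the perturbation must be uniform but sufficiently small and controlled that $H_t(A_n) \subseteq A_n$ and $H_t(B_n) \subseteq B_n$ throughout the homotopy. Making the coefficients $\alpha, \beta, \gamma$ continuous functions of $g(p)$ that achieve the rigid collapse $\rho(C_n) \subseteq B_n$ (requiring exact equality of extremes, not merely non-strict inequality) while remaining identity-like on the rest of $A_n$ is the delicate technical step. Once this explicit construction is in place, the descent to quotients and verification of the homotopies on $Q(n)$ and $Q'(n)$ are routine.
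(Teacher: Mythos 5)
Your overall strategy is genuinely different from the paper's: you aim to produce a homotopy equivalence of pairs $(A'_n,B'_n)\simeq (A_n,B_n)$ by an explicit retraction $\rho$ with a controlling homotopy, whereas the paper first reduces (via contractibility of $A_n$, $A'_n$ and cofibrancy of the subspace inclusions) to showing that $B_n\hookrightarrow B'_n$ is an equivalence, and then proves that by exhibiting $p:B'_n\to[0,1)$ as a quasifibration (May's criterion) with an explicit identification of the fiber comparison map $\widetilde{0}:p^{-1}(u)\to C_n$ as an equivalence. Your route, if completed, would prove slightly more (an actual homotopy equivalence of pairs), but the crucial construction is exactly the step you defer, and the ansatz you offer for it cannot work. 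For $p=([0,t_1],\dots,[0,t_n])\in C_n$ one has $s_i=0$ for all $i$ and $t_{i_0}=0$ for some $i_0$, so your formula gives image intervals $[\beta,\ \alpha t_i+\beta+\gamma]$ whose intersection is $[\beta,\beta+\gamma]$. Landing in $A_n$ forces the image of the degenerate interval $[0,0]$ to be non-degenerate, i.e.\ $\gamma(p)>0$; landing in $B_n$ forces the intersection to have empty interior, i.e.\ $\gamma(p)=0$. Your own ``exact balance'' condition reads $\beta=\beta+\gamma$ on $C_n$ and hence reproduces this contradiction. The same obstruction kills any map that applies one monotone reparametrization to all left endpoints and another to all right endpoints: on $C_n$ the image intersection is always $[f(0),g(0)]$, which must simultaneously be a single point and contain the non-degenerate image of $[0,0]$.

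The fix necessarily treats the intervals asymmetrically, e.g.\ sending $([0,t_i])_i$ to $([u-\epsilon(\max_j t_j-t_i),\ u+\epsilon t_i])_i$, which does land in $p^{-1}(u)\subset B_n$; but then you still owe (i) a continuous extension of this over all of $A'_n$ that is homotopic to the identity, and (ii) the verification that the homotopy preserves $B_n$, which is the exact codimension-one condition $\max_i s_i=\min_i t_i$ and is therefore destroyed by generic perturbations. Neither point is addressed, and (ii) in particular is where the real work lies — it is precisely the delicacy of the local structure of $B'_n$ near $C_n$ (the fiber comparison map collapses intervals rather than being a homeomorphism) that leads the paper to argue with a quasifibration instead of a product or retraction structure. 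As it stands, the proposal identifies the right difficulty but does not resolve it, so the proof is incomplete.
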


\begin{proof}
We first remark that one easily checks that $A_n$, $A^{\prime}_{n}$ are contractible, and the inclusions
$B_n\subset A_n$, $B^{\prime}_{n}\subset A^{\prime}_{n}$ are cofibrations. Therefore, it suffices to prove that
\beg{eseq2}{B_n\subset B^{\prime}_{n} \;\text{is an equivalence.}
}
To this end, we define a map
$$p:B^{\prime}_{n}\r [0,1)$$
by putting
$$(J_1,\dots,J_n)\in B_n\mapsto J_1\cap\dots\cap J_n,$$
$$([0,t_1],\dots,[0,t_n])\in C_n\mapsto 0.$$
Clearly, $p$ is continuous. Note that \rref{eseq2} would follow from
\beg{eseq3}{\text{$p$ is a quasifibration.}
}
Thus, it suffices to prove \rref{eseq3}. We will use the criterion of May \cite{mayg}, Theorem 2.6. We remark that 
$\{0\}$, $(0,1)$ are clearly distinguished sets, as $p:B_n\r (0,1)$ is clearly a fiber bundle (hence, in fact,
a product).

We are therefore done if we can exhibit a continuous homotopy
$$\widetilde{\lambda}:B^{\prime}_{n}\r B_n$$
which covers multiplication by $\lambda\in [0,1]$ on $[0,1)$ such that $\widetilde{0}$ is an equivalence on fibers. 
We let
$$\widetilde{\lambda}|_{C_n}=Id,$$
and for 
$$J_1\cap\dots\cap J_n=\{n\}, \; J_i=[s_i,t_i],$$
we have
$$\widetilde{\lambda}(J_1,\dots,J_n)=
([\phi_{\lambda,u}s_1,\phi_{\lambda,u}t_1],
\dots,[\phi_{\lambda,u}s_n,\phi_{\lambda,u}t_n])
$$
where 
$$\phi_{\lambda,u}:I\r I$$
is a continuous map preserving $0$, $1$ such that
$$\phi_{\lambda,u}(u)=\lambda u$$
and $\phi_{\lambda,u}$ is linear on $[0,u]$ and $[u,1]$.
Clearly, 
$$\widetilde{?}:B^{\prime}_{n}\times [0,1]\r B^{\prime}_{n}$$
is continuous, and $\widetilde{\lambda}$ covers
$\lambda:[0,1]\r [0,1]$.
Thus, it remains to show that
\beg{eseq4}{\widetilde{0}:p^{-1}(u)\r C_n
}
is an equivalence. To this end, define a linear increasing homeomorphism
$$q_u:[u,1]\r [0,1]$$
and a linear {\em decreasing} homeomorphism
$$r_u:[0,u]\r[0,1].$$
Define
$$\Phi_u:p^{-1}(u)\r I^{2n}$$
by
$$\begin{array}{l}
\Phi_u([s_1,t_1],\dots,[s_n,t_n])=\\
(r_u(s_1),q_u(t_1),\dots,r_u(s_n),q_u(t_n)).
\end{array}$$
We then see that $\Phi_u$ is a homeomorphism onto the set $S$ of all points
$$(x_1,y_1,\dots,x_n,y_n)\in I^{2n}$$
where 
$$(x_i,y_i)\neq (0,0)$$
and
$$(\exists i,j) \; x_i=0, y_j=0.$$
Clearly, $S_0\subset S$ is a homotopy equivalence where
$$S_0=\{(x_1,y_1,\dots, x_n,y_n)\in S\mid x_i+y_i=1\}.$$
Then 
$$(x_1,y_1,\dots,x_n,y_n)\mapsto (x_1,\dots,x_n)$$
is a homeomorphism 
$$\diagram \Psi:S_0\rto^(.55){\cong}& R\enddiagram$$
where
$$R=\{(x_1,\dots,x_n)\in I^n\mid(\exists i,j) \; x_i=0,\; x_j=1\}.$$
Note that in the canonical decomposition of $I^n$ into cubes, $R$ is the union of open cubes in
$\partial I^n$ whose closures do not intersect $(0,\dots,0)$ and $(1,\dots, 1)$. Hence,
we have $R\cong S^{n-2}$. 

On the other hand, we have a homotopy equivalence
$$\diagram \Theta:C\rto^\simeq & R,\enddiagram$$
$$([0,t_1],\dots,[0,t_n])\mapsto (\frac{t_1}{\text{max}(t_i)},\dots, \frac{t_n}{\text{max}(t_i)}),$$
and we see that
$$\Theta\circ \widetilde{0}\circ\Phi^{-1}_{u}\circ\Psi^{-1}\simeq Id_R$$
(via a linear homotopy).
\end{proof}

\vspace{3mm}
\begin{corollary}
\label{cseq1}
(of the proof of Lemma \ref{lseq1}) The inclusion $C_n\r B^{\prime}_{n}$ is an equivalence.
\end{corollary}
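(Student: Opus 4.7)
The plan is to extract this immediately from the quasifibration structure built in the proof of Lemma \ref{lseq1}. There, using the criterion of May \cite{mayg}, Theorem 2.6, we verified that the map $p: B'_n \to [0,1)$ (sending a configuration in $B_n$ to its common intersection point, and collapsing $C_n$ to $0$) is a quasifibration, and by construction $p^{-1}(0) = C_n$.

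Since the base $[0,1)$ is contractible, the long exact sequence of homotopy groups of the quasifibration
\[ C_n \hookrightarrow B'_n \xrightarrow{p} [0,1) \]
collapses: in each degree the boundary and the map to $\pi_*([0,1))=0$ force the inclusion $\pi_*(C_n) \to \pi_*(B'_n)$ to be an isomorphism. Hence $C_n \hookrightarrow B'_n$ is a weak equivalence, which is exactly the corollary.

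One may also see this more concretely from the pieces already assembled: the homotopy $\widetilde{?}: B'_n \times [0,1] \to B'_n$ constructed in Lemma \ref{lseq1} begins at the identity $\widetilde{1}$, ends at $\widetilde{0}$, restricts to the identity on $C_n$ throughout (by the definition $\widetilde{\lambda}|_{C_n}=Id$), and has $\widetilde{0}$ landing in $C_n$. This essentially exhibits $C_n$ as a strong deformation retract of $B'_n$, with the only subtlety being continuity at $\lambda=0$, which is precisely what the quasifibration verification already handles.

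I do not expect a real obstacle here, since all of the genuine topological work --- identifying distinguished subsets of $[0,1)$, producing the covering homotopy $\widetilde{\lambda}$, and checking that $\widetilde{0}: p^{-1}(u) \to C_n$ is an equivalence --- was already carried out in the proof of Lemma \ref{lseq1}. The corollary is simply the observation that a quasifibration over a contractible base is fiber-equivalent to its total space, applied to the fiber over $0$.
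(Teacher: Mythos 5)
Your argument is correct and is exactly what the paper intends: the corollary is stated without proof as a consequence of the quasifibration $p:B'_n\to[0,1)$ established in Lemma \ref{lseq1}, whose fiber over $0$ is $C_n$, so the long exact sequence over the contractible base gives that the inclusion induces isomorphisms on all homotopy groups. Nothing further is needed.
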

\qed

\vspace{3mm}
\noindent
{\bf Proof of Proposition \ref{pseq1}:}
We just proved that the inclusion
\beg{eseqi}{\diagram Q\rto^\subset & Q^\prime\enddiagram}
is an equivalence of $1$-operads. Now note that we have another $1$-operad $\mathscr{J}^\prime$
with 
$$\mathscr{J}^\prime (n)= C_n$$
and the same composition formula. Thus, by Corollary \ref{cseq1}, the inclusion
\beg{eseqii}{\diagram\mathscr{J}^\prime \rto^\subset &Q^\prime\enddiagram
}
is an equivalence of $1$-operads. Finally, we have an obvious inclusion of $1$-operads
\beg{eseqiii}{\diagram\mathscr{J}\rto^\subset &\mathscr{J}^\prime \enddiagram
}
which on $\mathscr{J}(n)$ is
$$[t_1,\dots,t_n]\mapsto ([0,t_1],\dots, [0,t_n]).$$
Clearly, this is an equivalence, so by the equivalences \rref{eseqi}, \rref{eseqii}, \rref{eseqiii}, we are done.
\qed

\vspace{3mm}
\section{$S$-Lie algebras and their representations}
\label{slierep}

In~\cite{ching}, Ching constructed the Lie operad over $S$, using the Goodwillie derivatives of the identity
functor. In this section, 
we begin by constructing another version $\mathscr{L}$ 
of the Lie operad, which is equivalent in the derived category with Ching's operad 
$Ch$. The principle behind our construction is that of Koszul duality. In classical algebra, it is a well-known fact that 
the Lie operad and the commutative operads are Koszul dual to each other. Hence, we define the derived Lie operad 
$\mathscr{L}$ to be the Koszul dual of the (desuspension of the) commutative operad. 

More specifically, recall that
in algebra, $E_n$-operads are Koszul dual to 
themselves, up to desuspension by $n-1$ (see e. g. \cite{fresse}), and an analogous result holds over $S$ 
(see Proposition~\ref{psslie3} below). 

\subsection{The derived Lie operad}
\label{sslie}
Now consider the $k$-dimensional little cube operad $\mathscr{C}_{k+}$ in $\mathscr{B}$ (where the 
subscript $?_+$ indicates a disjoint base point), with 
$n$-th space
$$\mathscr{C}_k(n)_+.$$
We know by \cite{mayg} that
$$\mathscr{C}_{\infty+}=\lim_\r \mathscr{C}_{k+}$$
is an $E_\infty$-operad. Now note that there is a natural map of $1$-operads
\beg{essli+}{\mathscr{C}_{(k+1)+}\r Q\wedge(\mathscr{C}_{k+})
}
given by smashing the projection of little cubes to the first coordinate with the projection to the last $k$
coordinates (term-wise from the point of view of the $1$-operad). By Lemma \ref{l1}, we then have maps of
$1$-operads
\beg{esslie1}{\widetilde{\mathscr{T}}\wedge \mathscr{C}_{(k+1)+}\r \mathscr{C}_{k+}.
}
To simplify notation, we shall from now on omit the tilde from $\widetilde{\mathscr{T}}$, and write
simply $\mathscr{T}$. We shall, however, always mean the cofibrant replacement. Let
\beg{esslie2}{\mathscr{L}=\operatornamewithlimits{holim}_\leftarrow 
\mathscr{T}^k\wedge \mathscr{C}_{(k+1)+},
}
using the maps \rref{esslie1}. We shall call $\mathscr{L}$ the {\em derived Lie $1$-operad}.
We shall also consider $\mathscr{L}$ as an operad by applying the inclusion functor mentioned after
Definition \ref{d1}.
\vspace{3mm}

The following proposition shows that $\mathscr{L}$ is indeed a derived version of the Lie operad over $S$. 

\begin{proposition}
\label{psslie3}
There is an equivalence between the $1$-operad $\mathscr{L}$ and $\mathscr{J}\wedge Ch$, where
$Ch$ is the Ching operad \cite{ching}.
\end{proposition}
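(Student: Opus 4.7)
The plan is to identify both sides of the proposed equivalence with the same Koszul-dual construction of the commutative operad. Ching's operad $Ch$, built from the Goodwillie derivatives of the identity on based spaces, is known (by Ching's work, with antecedents in Arone--Mahowald) to compute, up to a single suspension, the Koszul dual of commutative; equivalently, it is a bar construction on the commutative cooperad. Meanwhile, $\mathscr{L}$ as defined by \rref{esslie2} realizes this same Koszul dual from the perspective of the Koszul self-duality of $E_n$-operads: the little $n$-cubes operad is Koszul self-dual up to an $(n-1)$-fold shift, and stabilizing the shifted $(k{+}1)$-cubes operads along the standard embeddings recovers, up to a leftover factor of $\mathscr{J}$, the desired Lie operad.

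Concretely, I would first unpack the tower. Combining \rref{essli+} with Proposition \ref{pseq1} and Lemma \ref{l1} (and working with fixed cofibrant replacements throughout), the structure map \rref{esslie1} fits into a natural equivalence
$$\mathscr{T}^{k}\wedge \mathscr{J}\wedge \mathscr{C}_{k+}\simeq \mathscr{T}^{k-1}\wedge \mathscr{C}_{k+},$$
so the tower defining $\mathscr{L}$ becomes
$$\mathscr{T}^{k}\wedge \mathscr{C}_{(k+1)+}\longrightarrow \mathscr{T}^{k-1}\wedge \mathscr{C}_{k+},$$
yielding $\mathscr{L}\simeq \mathscr{J}\wedge \holim\bigl(\mathscr{T}^{k-1}\wedge \mathscr{C}_{k+}\bigr)$. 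The extracted factor of $\mathscr{J}$ accounts for the offset between the $k$-fold desuspension appearing in \rref{esslie2} and the $(k{-}1)$-fold desuspension demanded by $E_{k}$ Koszul self-duality.

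The main step is then to construct, at each level $k$, a natural zigzag equivalence of $1$-operads between $\mathscr{T}^{k-1}\wedge \mathscr{C}_{k+}$ and a bar-cobar model of the Koszul dual of $\mathscr{C}_{k+}$, and to verify that these zigzags are compatible with the connecting maps in the tower (which arise from the standard stabilization $\mathscr{C}_{k+}\hookrightarrow \mathscr{C}_{(k+1)+}$). Taking the homotopy limit, using that $\mathscr{C}_{\infty+}$ is equivalent to the commutative $E_\infty$-operad, one identifies $\holim(\mathscr{T}^{k-1}\wedge \mathscr{C}_{k+})$ with the Koszul dual of commutative, and hence with $Ch$ by Ching's theorem.

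The main obstacles I anticipate are (i) pinning down the shifts and cofibrant replacements so that the $E_n$ Koszul self-duality zigzag at each level is strictly compatible with the tower maps, given that $\mathscr{T}$ serves as an inverse to $\mathscr{J}$ only up to the chosen equivalence of Lemma \ref{l1}; and (ii) commuting $\holim$ with the smash products and with passage to $Ch$, which I would handle by performing the comparison functorially through the geometric (two-sided) bar construction on cofibrant operads, where the relevant smashes and homotopy limits behave compatibly with the derived structure.
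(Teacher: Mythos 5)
Your proof takes essentially the same route as the paper's: both rest on the Koszul self-duality of $\mathscr{C}_{k+}$ up to the shift $\mathscr{T}^{k-1}$ (imitating \cite{hu}), on Ching's identification of $Ch$ with the (shifted) Koszul dual of the commutative operad, and on commuting Koszul duality with the stabilization tower; the paper merely packages this as computing the Koszul duals of both sides (each equivalent to $\mathscr{T}$) and then dualizing once more, where you identify the homotopy limit with $Ch$ directly. One indexing slip to correct: after reindexing $m=k+1$, the tower \rref{esslie2} is already $\operatornamewithlimits{holim}_{\leftarrow}\mathscr{T}^{m-1}\wedge\mathscr{C}_{m+}$, so your displayed factorization asserts $\mathscr{L}\simeq\mathscr{J}\wedge\mathscr{L}$, which fails already on second terms since $\mathscr{L}(2)\simeq S^0$ while $\mathscr{J}(2)\wedge\mathscr{L}(2)\simeq S^1$ (compare Proposition \ref{psslie2}). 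The intended extraction is $\mathscr{L}\simeq\mathscr{J}\wedge\operatornamewithlimits{holim}_{\leftarrow}(\mathscr{T}^{k}\wedge\mathscr{C}_{k+})$, whose second factor has $n$-th term $\bigvee_{(n-1)!}S^{1-n}$, matching $Ch$; with that correction, your level-wise comparison and passage to the limit are exactly the ``direct calculation'' the paper's proof invokes.
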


\begin{proof}
One knows by imitating the proof of \cite{hu} that the Koszul dual,
in the sense of Ching \cite{ching}, of $\mathscr{C}_{k+}$ is $\mathscr{T}^{k-1}\wedge \mathscr{C}_k$.
While Koszul duality, in general, does not commute with homotopy inverse limits, it follows by direct
calculation that it does so in this case of $\mathscr{L}$. Thus, the Koszul dual of $\mathscr{L}$ is
equivalent to $\mathscr{T}$, as is the Koszul dual of $\mathscr{J}\wedge Ch$. Applying Koszul duality
again gives the statement.
\end{proof}

\vspace{3mm}
\begin{proposition}
\label{psslie1}
We have a diagram of operads in $\mathscr{S}$:
\beg{epsslie*}{\diagram
\mathscr{L}\rto^u\dto_p & \mathscr{C}_{1+}\dto^q\\
\mathscr{E}\rto_i & \mathscr{C}_{\infty +}
\enddiagram}
which is commutative in the derived (i. e. Quillen homotopy) category.
\end{proposition}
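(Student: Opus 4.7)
The plan is to construct the four morphisms and then verify the diagram commutes in the Quillen homotopy category of operads. The morphisms $q$ and $i$ are standard: $q:\mathscr{C}_{1+}\to\mathscr{C}_{\infty+}$ is induced by the inclusion $\R^{1}\subset\R^{\infty}$, and $i:\mathscr{E}\to\mathscr{C}_{\infty+}$ is the unique operad map out of the initial operad. The map $u:\mathscr{L}\to\mathscr{C}_{1+}$ is the $k=0$ projection from the defining homotopy limit \rref{esslie2}, since $\mathscr{T}^{0}\wedge\mathscr{C}_{1+}=\mathscr{C}_{1+}$; it is operadic because the Koszul structure maps \rref{esslie1} assembling the inverse system are themselves morphisms of operads.

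The morphism $p:\mathscr{L}\to\mathscr{E}$ is built arity by arity. In arity $1$, both $\mathscr{T}(1)$ and $\mathscr{C}_{k+1,+}(1)$ are canonically equivalent to $S$, so $\mathscr{L}(1)\simeq S=\mathscr{E}(1)$, and we take $p(1)$ to be (the homotopy inverse of) the unit, giving an equivalence in the derived category. In arities $n\geq 2$ the target $\mathscr{E}(n)=*$ is the zero object of spectra, so $p(n)$ is the unique map. Operadic compatibility is automatic: any operad composition with output arity $\geq 2$ lands in $*$, while the purely arity-$1$ compositions are identities on $S$.

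Commutativity is immediate in arities $0$ and $1$. The substantive case is arity $n\geq 2$, where $i\circ p$ collapses to the basepoint of $\mathscr{C}_{\infty+}(n)$, so we must show $q\circ u$ is null-homotopic. Since $\mathscr{C}_{\infty}(n)$ is contractible, $\mathscr{C}_{\infty+}(n)\simeq S$, and by Proposition~\ref{psslie3}, $\mathscr{L}\simeq\mathscr{J}\wedge Ch$. The composite $q\circ u$ at arity $n$ is a spectrum-level realization of the classical composition $\text{Lie}\to\text{Ass}\to\text{Com}$, which vanishes in arities $\geq 2$ because the Lie bracket $[a,b]=ab-ba$ is sent by the augmentation $\text{Ass}\to\text{Com}$ to $1-1=0$. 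Concretely, $\mathscr{C}_{1+}(n)\simeq\bigvee_{\sigma\in\Sigma_{n}}S$ and $q(n)$ is the fold map; the antisymmetry and Jacobi-type relations carried by elements of $\mathscr{L}(n)$ force the component sums to cancel.

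The main obstacle will be to promote these arity-wise null-homotopies to a coherent null-homotopy of operad morphisms, that is, to an operad map from $\mathscr{L}$ to a path object of $\mathscr{C}_{\infty+}$ interpolating between $q\circ u$ and $i\circ p$. Here we exploit that $\mathscr{C}_{\infty+}$ is an $E_{\infty}$-operad: once the arity-$1$ unit is fixed, operad maps from a cofibrant source into it are determined up to contractible choice by their effect on the augmentation ideal, so the arity-wise null-homotopies in arities $\geq 2$ assemble automatically into an operadic one. This yields the required commutativity in the derived category.
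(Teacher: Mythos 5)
Your construction of the four maps matches the paper's ($u$ as the projection of the homotopy limit \rref{esslie2} onto its $k=0$ term, $p$ via the adjunction with associative algebras using $\mathscr{L}(1)\simeq S^0$, and the standard $i$, $q$), and your arity-wise observation that $q\circ u$ should be null in arities $\geq 2$ because the classical composite $\mathrm{Lie}\to\mathrm{Ass}\to\mathrm{Com}$ vanishes is a reasonable heuristic (though even there you are implicitly identifying the map $H_0(\mathscr{L}(n))\to H_0(\mathscr{C}_{1+}(n))$ with the classical Lie-to-associative map, which is asserted rather than proved). The genuine gap is your final step. You acknowledge that the substantive issue is promoting arity-wise null-homotopies to a homotopy of \emph{operad} maps, and then you dispose of it by claiming this is ``automatic'' because $\mathscr{C}_{\infty+}$ is an $E_\infty$-operad. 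That claim is not justified and is false as stated: the terms of $\mathscr{C}_{\infty+}$ are $\Sigma^\infty E\Sigma_{n+}\simeq S^0$, which is \emph{not} the zero object of $\mathscr{S}$, so the derived mapping space of operad maps $\mathscr{L}\to\mathscr{C}_{\infty+}$ is not contractible, and levelwise-homotopic operad maps into it need not be operadically homotopic. Indeed, the very content of the proposition is that the two particular operad maps $q\circ u$ and $i\circ p$ agree in the homotopy category --- if this were formal from the target being $E_\infty$, the proposition would be vacuous. There is a real coherence problem here (compatible choices of null-homotopies across all arities and all composition maps), and you have not addressed it.

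The paper resolves exactly this point by an explicit operad-level factorization rather than an assembly of arity-wise homotopies. Using the defining inverse system $\mathscr{T}^k\wedge\mathscr{C}_{(k+1)+}$ and the map of operads $\epsilon:\mathscr{D}_+\to Q$ (projection of the ``all intervals'' operad onto the Arone--Kankaanrinta model), one shows that the composite $\mathscr{L}\to\mathscr{C}_{(k+1)+}$ factors, as a map of operads up to homotopy, through $\epsilon^k:\mathscr{L}\to\mathscr{J}^k\wedge\mathscr{L}$, compatibly as $k$ grows; passing to the colimit, $q\circ u$ factors through $(\lim_\r\mathscr{J}^k)\wedge\mathscr{L}$, and this colimit operad is equivalent to $\mathscr{E}$ because $\mathscr{J}^k(n)\simeq S^{k(n-1)}$ becomes trivial in the colimit for $n\geq 2$. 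This single operadic factorization through $\mathscr{E}$ is what delivers all the coherence at once; some argument of this kind (or an actual obstruction-theoretic analysis of the space of operad maps) is needed to close your proof.
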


\begin{proof}
It is easy to see explicitly that $\mathscr{L}(1)\simeq S^0$ (since the stabilization is an isomorphism on that
term), so $p$ is obtained as the counit of the adjunction between associative algebras and $1$-operads 
(see the comments after Definition \ref{d1}).

The top map follows directly from the definition \rref{esslie2}. 
Now let us investigate the composition map
\beg{esslie3}{\mathscr{T}^k\wedge\mathscr{C}_{(k+1)+}\r\mathscr{T}^{k-1}\wedge\mathscr{C}_{k+}\r
\dots\r \mathscr{C}_{1+}\r\dots\r\mathscr{C}_{(k+1)+}.
}
Smashing with $\mathscr{J}^k$ (again, we omit the tilde, but mean a cofibrant model), we have, in the
homotopy category of operads, a map
$$\mathscr{C}_{(k+1)+}\r Q^k\wedge\mathscr{C}_{(k+1)+}$$
which is, more or less by the definition of \rref{essli+}, $\epsilon\wedge Id$ where
$$\epsilon:S^0\r Q^k$$
is given as follows:
Choose a model $\widetilde{S}^0$ where
$$\widetilde{S}^0(n)=\Delta^{n}_+$$
with operad structure
$$
\begin{array}{l}
[t_1,\dots,t_k]\times [s_{1,1},\dots, s_{1,n_1}]\times\dots
\times [s_{k,1},\dots,s_{k,n_k}]\\
\mapsto [t_1s_{1,1},\dots,t_1s_{1,n_1},\dots, t_ks_{k,1},\dots, t_ks_{k,n_k}].
\end{array}
$$
A map of operads $\widetilde{S}^0\r \mathscr{J}$ is given by the projections
$$\Delta^{n}_+\r \Delta^n/\partial \Delta^n.$$
As before, this can be modeled by a map
$$\epsilon:\mathscr{D}_+\r Q$$
where
$$\mathscr{D}(n)_+=
\{(J_1,\dots,J_n)\mid
J_1,\dots,J_n\;\text{are closed subintervals of $I$}\}
$$
and $\epsilon$ is the projection. We conclude that \rref{esslie3} factors, up to homotopy, through
a map of operads
$$\diagram
\mathscr{T}^k\wedge \mathscr{C}_{(k+1)+}\dto_{\epsilon^k}\rto & \mathscr{C}_{(k+1)+}\\
\mathscr{J}^k\wedge \mathscr{T}^k\wedge \mathscr{C}_{(k+1)+}.\urto_\simeq &
\enddiagram
$$
Additionally, we claim there is a homotopy commutative diagram 
$$\diagram
\mathscr{J}^{k-1}\wedge \mathscr{T}^k\wedge \mathscr{C}_{(k+1)+}
\dto_{\epsilon\wedge Id}\rto 
& \mathscr{J}^{k-1}\wedge \mathscr{T}^{k-1}\wedge \mathscr{C}_{k+}
\rto^\simeq &\mathscr{C}_{k+} \dto^{\subset}\\
\mathscr{J}^k\wedge\mathscr{T}^k\wedge \mathscr{C}_{(k+1)+}\rrto_\simeq && \mathscr{C}_{(k+1)+}.
\enddiagram
$$
Thus, we have a homotopy commutative diagram of $\mathscr{S}$-operads
$$
\diagram
\mathscr{L}\dto_{\epsilon^k}&&&\\
\mathscr{J}^k\wedge\mathscr{L}\dto_\epsilon\rto &
\mathscr{J}^k\wedge\mathscr{T}^k\wedge\mathscr{C}_{(k+1)+}\rto^\simeq &\mathscr{C}_{(k+1)+}\dto &\\
\mathscr{J}^{k+1}\wedge \mathscr{L}\rto &
\mathscr{J}^{k+1}\wedge\mathscr{T}^{k+1}\wedge\mathscr{C}_{(k+2)+}\rto^\simeq &
\mathscr{C}_{(k+2)+}\rto &\mathscr{C}_{\infty +}
\enddiagram
$$
and hence taking the colimit, 
$$\mathscr{L}\r \mathscr{C}_{\infty +}$$
factors as
$$\diagram
\mathscr{L}\drto \dto_{\displaystyle\lim_\r \epsilon^k}&\\
(\displaystyle\lim_\r\mathscr{J}^k)\wedge\mathscr{L}\rto & \mathscr{C}_{\infty +}.
\enddiagram
$$
However, the lower left corner operad is equivalent to $\mathscr{E}$.
\end{proof}

\vspace{3mm}
\begin{proposition}
\label{psslie2}
$$\mathscr{L}(n)\simeq \bigvee_{(n-1)!} S^0.$$
\end{proposition}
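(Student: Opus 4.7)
The plan is to deduce this from the equivalence $\mathscr{L} \simeq \mathscr{J} \wedge Ch$ just established in Proposition \ref{psslie3}. Term by term this gives
$$\mathscr{L}(n) \simeq \mathscr{J}(n) \wedge Ch(n) \simeq \Sigma^{n-1} Ch(n),$$
since $\mathscr{J}(n) = \Delta^{n-1}/\partial \Delta^{n-1}$ is the standard model of $S^{n-1}$. Thus it suffices to verify that $Ch(n) \simeq \bigvee_{(n-1)!} S^{1-n}$; smashing with $S^{n-1}$ then yields the claim.

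To identify the homotopy type of $Ch(n)$, I would appeal to Ching's construction of the operad of Goodwillie derivatives of the identity as a stable homotopy model of the (desuspended) classical Lie operad. In that construction, $Ch(n)$ is built from a tree filtration indexed by rooted trees with leaf set $\{1,\dots,n\}$, and the key computation gives that $Ch(n)$ is $(-n)$-connected with $\pi_{1-n}(Ch(n)) \cong \mathrm{Lie}(n)$, a free abelian group of rank $(n-1)!$ (with basis given, for instance, by left-normed bracketings of $\{2,\ldots,n\}$ applied to $1$). Examining the cellular structure of Ching's model, one verifies that $Ch(n)$ has no stable cells in dimensions other than $1-n$, so a connectivity and Hurewicz argument forces
$$Ch(n) \simeq \bigvee_{(n-1)!} S^{1-n}.$$

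The main obstacle is Ching's input on the stable homotopy type of $Ch(n)$, which we use as a black box. A more self-contained alternative would start directly from the definition \rref{esslie2}: since $\mathscr{T}^k(n) \simeq S^{-k(n-1)}$ (Spanier--Whitehead dual of $\mathscr{J}^k(n) \simeq S^{k(n-1)}$) and $\mathscr{C}_{(k+1)+}(n)$ is homotopy equivalent to $F(\R^{k+1},n)_+$, one would study the tower $\mathscr{T}^k(n) \wedge \mathscr{C}_{(k+1)+}(n)$ and show that in the stabilization $k \to \infty$, only the top-dimensional cells of the configuration space survive, giving $(n-1)!$ wedge summands of $S^0$. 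This recovers F.~Cohen's computation of the Koszul dual of the $E_\infty$ operad, but is substantially more involved than invoking Proposition \ref{psslie3}, so I would stick with the first route.
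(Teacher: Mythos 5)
Your proposal is correct, but it takes a genuinely different route from the paper. The paper's proof is precisely the ``self-contained alternative'' you sketch and then set aside: it works directly from the definition \rref{esslie2}, using the configuration-space fibrations $\bigvee_{n-1}S^{k-1}\r\mathscr{C}_k(n)\r\mathscr{C}_k(n-1)$ and the associated Gysin cofibrations to show that $\Sigma^{(1-k)(n-1)}\mathscr{C}_k(n)_+$ has $(n-1)!$ cells in dimension $0$ and all remaining cells in dimensions tending to $-\infty$ as $k\to\infty$; an obstruction-theory argument then factors the tower \rref{ep2sslie1} through the constant tower on $\bigvee_{(n-1)!}S^0$, which both recovers the stable splitting of the top cells of configuration spaces and identifies the homotopy limit. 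Your route instead pushes everything through Proposition \ref{psslie3} and the known homotopy type of Ching's operad. That is logically admissible (Proposition \ref{psslie3} precedes this one and its proof does not invoke it), and it is shorter, but it imports a substantial black box: the identification $Ch(n)\simeq\bigvee_{(n-1)!}S^{1-n}$ is the computation of the Goodwillie derivatives of the identity via the partition poset complex, and your stated justification --- that Ching's model ``has no stable cells in dimensions other than $1-n$'' --- is not accurate as written (the bar/tree construction has cells in a range of dimensions; collapsing to a single wedge of spheres requires the shellability of the partition lattice or an equivalent argument). The paper's approach buys two things: independence from that external input, and an explicit hands-on model of the equivalence via the stabilization maps, which is what is actually needed later (e.g.\ in the proofs of Proposition \ref{pdrl1} and Lemma \ref{lprod1}, where specific diagrams built from these maps must be matched term by term). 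Your approach buys brevity and a conceptual link to the Lie operad, at the cost of treating the key computation as known.
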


\begin{proof}
We have a sequence of fibrations
$$\diagram
&\vdots\\
\displaystyle\bigvee_2 S^{k-1}\rto & \mathscr{C}_k(3)\dto\\
\displaystyle\bigvee_1 S^{k-1}\rto & \mathscr{C}_k(2)\dto\\
&\mathscr{C}_k(1)\simeq *.
\enddiagram
$$
The general fibration is
$$
\bigvee_{n-1} S^{k-1}\r \mathscr{C}_k(n)\r\mathscr{C}_k(n-1).
$$
Considering the corresponding Gysin cofibration
$$
\mathscr{C}_k(n)_+\r \mathscr{C}_k(n-1)_+\r \mathscr{T}_k(n),
$$
we see that $\mathscr{T}_k(n)$ has a based CW-decomposition with cells
corresponding to $(n-1)$ copies of the cells of the CW-decomposition of 
$\mathscr{C}_k(n-1)_+$, suspended by $k$.
Thus, stably, 
$\Sigma^{1-k}\mathscr{C}_k(n)_+$ has a finite CW-decomposition with 
$(n-1)!$ cells in dimension $0$, and other cells in dimension $\leq 1-k$.

Now investigate the homotopy limit of the sequence
\beg{ep2sslie1}{\dots\r
\Sigma^{(1-(k+1))(n-1)}\mathscr{C}_{k+1}(n)_+\r
\Sigma^{(1-k)(n-1)}\mathscr{C}_k(n)_+\r \dots\r \mathscr{C}_1(n)_+.
}
We see by obstruction theory that for each $k$, there exists a $K_k>>0$ such that we have a 
stable factorization
$$\diagram
\displaystyle\bigvee_{(n-1)!}S^0
\drdotted|>\tip\rto^{Id} &\displaystyle\bigvee_{(n-1)!}S^0\\
\Sigma^{1-K_k}\mathscr{C}_{K_k}(n)_+\udotted|>\tip\rto
&
\mathscr{C}_k(n)_+.\uto
\enddiagram
$$
This implies both that the top cells of $\mathscr{C}_k(n)_+$ stably split (which is well known)
and also that the homotopy limit \rref{ep2sslie1} factors through
$$\diagram
\dots \rto^(.4){Id} & \displaystyle\bigvee_{(n-1)!} S^0\rto^{Id} & \displaystyle\bigvee_{(n-1)!} S^0
\enddiagram
$$
which implies the statement.
\end{proof}

\vspace{3mm}
\subsection{Derived Lie algebra representations}\label{ssdlr}
By a {\em Lie algebra over $S$}, we shall mean an algebra $g$ over the $\mathscr{S}$-operad $\mathscr{L}$.
By a {\em $g$-representation}, we shall mean a module over the operad $\mathscr{L}$ and a Lie algebra $g$.
Over an ordinary ring from classical algebra, it is almost trivial to see that a representation of a Lie algebra $g$ is the same
thing as a left (or, alternately, right) module over its universal enveloping algebra $Ug$. 

Over $S$, this is still true, but it requires more discussion. The universal enveloping algebra functor is the
pushforward
$$U=u_\sharp$$
where 
$$u:\mathscr{L}\r\mathscr{C}_{1+}$$
is the canonical map (see \rref{esslie3}). However, an operad module over $\mathscr{C}_{1+}$ models a
{\em bimodule}, not a left or right module. Nevertheless, for a $\mathscr{C}_{1+}$-algebra $A$, a left
(or right) $(\mathscr{C}_{1+},A)$-module can be defined. Recall that connected components define an
equivalence of operads in spaces:
$$\mathscr{C}_1\r\Sigma$$
where $\Sigma$ is the associative operad,
$$\Sigma(n)=\Sigma_n.$$
Denote by $\mathscr{C}^{L}_{1}(n)$ the fiber of $\mathscr{C}_1(n)$ over the isotropy subgroup
$\Sigma_{n}^{L}$ of $n$ in $\Sigma_n$ (isomorphic to $\Sigma_{n-1}$). Then a {\em left
$(\mathscr{C}_{1+},A)$-module} has structure maps
$$\mathscr{C}^{L}_{1}(n)_+\wedge A^{n-1}\wedge X\r X$$
which satisfy associativity and equivariance with respect to $\Sigma^{L}_{n}$. 
Note that the operad structure map takes
$$\mathscr{C}^{L}_{1}(n)\times\mathscr{C}_1(k_1)\times\dots\times
\mathscr{C}_1(k_{n-1})\times \mathscr{C}^{L}_{1}(k_n)\r\mathscr{C}^{L}_{1}(k_1+\dots+k_n).$$
The definition of a {\em right
$(\mathscr{C}_{1+},A)$-module} is analogous when we replace ``$n$" by ``$1$" and write $R$ instead of $L$.

\vspace{3mm}
\begin{proposition}
\label{pdrl1}
Let $\widetilde{\mathscr{C}}_{1+}$ be a cofibrant model of $\mathscr{C}_{1+}$. (We already assume $\mathscr{L}$
to be cofibrant). Let $g$ be a cofibrant Lie algebra. Then the derived category of $g$-representations is canonically
equivalent to the derived category of left (or, alternately, right) $Ug$-modules.
\end{proposition}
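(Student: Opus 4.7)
The plan is to prove the statement for left modules (the right case is symmetric), and deduce the left/right equivalence from the common equivalence with $g$-representations. Concretely, I would construct a Quillen adjunction $F\dashv G$ between $(\mathscr{L},g)$-modules and left $Ug$-modules, and show it is a Quillen equivalence.

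For the right adjoint $G$, given a left $Ug$-module $V$ with structure maps
$$\mathscr{C}_1^L(n)_+\wedge (Ug)^{\wedge (n-1)}\wedge V\r V,$$
I would define a $(\mathscr{L},g)$-module structure on the same underlying spectrum as the composition
$$\mathscr{L}(n)\wedge g^{\wedge (n-1)}\wedge V\r \mathscr{C}_1^L(n)_+\wedge (Ug)^{\wedge(n-1)}\wedge V\r V,$$
where on each $g$-factor we apply the unit $g\r u^*Ug$ of the $(u_\sharp,u^*)$-adjunction, and on $\mathscr{L}(n)$ we apply a $\Sigma_{n-1}$-equivariant map $\mathscr{L}(n)\r \mathscr{C}_1^L(n)_+$ obtained by composing the operad map $u(n)$ with the projection of $\mathscr{C}_1(n)_+$ onto the summand of configurations whose $n$-th cube is rightmost. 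By Proposition \ref{psslie2} and the stable splitting of $\mathscr{C}_1(n)_+$, both $\mathscr{L}(n)$ and $\mathscr{C}_1^L(n)_+$ split as $\bigvee_{(n-1)!}S^0$ with free $\Sigma_{n-1}$-action, and one verifies that the constructed map is a $\Sigma_{n-1}$-equivalence. Since fibrations and equivalences are detected on underlying spectra in both model structures, $G$ preserves them, so $(F,G)$ is a Quillen adjunction.

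The content of the proposition is then the derived unit $M\r GF(M)$ being an equivalence for cofibrant $M$. By standard cell induction for cofibrant operadic modules, this reduces to the case where $M$ is the free $(\mathscr{L},g)$-module on a cofibrant spectrum $X$, namely
$$M=\bigvee_{n\geq 1}\mathscr{L}(n)\wedge_{\Sigma_{n-1}} g^{\wedge (n-1)}\wedge X,$$
and the claim becomes that the natural comparison map from this spectrum to $Ug\wedge X$ (the underlying spectrum of the free left $Ug$-module on $X$) is a weak equivalence of spectra.

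I expect this last reduction---a derived Poincar\'e--Birkhoff--Witt comparison over $S$---to be the main obstacle. To tackle it, I would first reduce by a secondary cell induction on $g$ to the case of a free derived Lie algebra $g=\mathscr{L}(Y)$ on a cofibrant spectrum $Y$. In this case, by the adjunction $(u_\sharp,u^*)$, $Ug$ is identified with the free $\mathscr{C}_{1+}$-algebra on $Y$, and the free module on the left side can be unravelled using the homotopy-limit presentation \rref{esslie2} of $\mathscr{L}$ together with the stable splittings of $\mathscr{C}_k(n)_+$ exploited in Proposition \ref{psslie2}. The identification then amounts to the operadic Koszul duality between $\mathscr{L}$ and the sequence of little cube operads, evaluated levelwise, and has essentially the same structure as the proof of Proposition \ref{psslie2}. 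Finally, the equivalence with right $Ug$-modules follows by combining the left-module case with the self-equivalence of $g$ induced by the sign involution of the $\Sigma_n$-actions on $\mathscr{L}(n)$, which models the classical antipode $x\mapsto -x$ on the universal enveloping algebra.
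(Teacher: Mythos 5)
Your key computational input --- that $u$ induces a $\Sigma_{n-1}$-equivariant equivalence from $\mathscr{L}(n)$ to $\mathscr{C}_{1+}^{L}(n)$, both being wedges of $(n-1)!$ copies of $S^0$ with free $\Sigma_{n-1}$-action --- is exactly the engine of the paper's proof, and the general strategy of reducing to free objects via bar constructions is also the same. However, your reduction step has a gap. The spectrum $\bigvee_{n\geq 1}\mathscr{L}(n)\wedge_{\Sigma_{n-1}}g^{\wedge(n-1)}\wedge X$ is not the free $(\mathscr{L},g)$-module on $X$ for the given Lie algebra structure on $g$; it is the free module over the \emph{free} Lie algebra on the underlying spectrum of $g$ (the functor the paper calls $D_1(g,X)$). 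The honest free $(\mathscr{L},g)$-module is a coequalizer of such terms identifying the operad action on the $g$-factors with the algebra structure of $g$, just as $Ug\wedge X$ involves the bar construction $B(C_{1+},L,g)$; comparing these two coequalizers is precisely the derived PBW statement you flag as the main obstacle. Your proposed fix --- a ``secondary cell induction on $g$'' --- is not a cell induction inside a fixed category, since the module categories and the comparison functors change as $g$ does, and you give no mechanism for transporting the statement across a cell attachment of Lie algebras.

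The paper sidesteps this by working with the monads $D$ and $E$ on \emph{pairs} of spectra, whose algebras are (Lie algebra, module) and ($\mathscr{C}_{1+}$-algebra, left module) respectively, and by resolving $g$ and $X$ simultaneously with the single two-sided bar construction $B(D,D,(g,X))$. Each simplicial level is then a free pair, where the levelwise equivalence $D_1(g,X)\simeq E_1(g,X)$ applies verbatim, and the filtration spectral sequence finishes the argument; no separate induction on $g$ is needed. You should adopt that organization. Finally, for right modules, do not route through an antipode: a sign involution on the $\mathscr{L}(n)$ does not obviously rigidify to a map of $\mathscr{C}_{1+}$-algebras from $Ug$ to its opposite over $S$ (the paper elsewhere stresses the difficulty of rigidifying conjugation). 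The paper simply reruns the argument with $\mathscr{C}_{1}^{R}$ in place of $\mathscr{C}_{1}^{L}$, obtaining the equivalence with right $Ug$-modules independently; the left/right comparison then follows through the common equivalence with $g$-representations, which is the deduction you wanted at the outset.
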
 

\begin{proof}
Consider first the monad 
$$D(g,X)=(Lg, D_1(g,X))$$
in the category of pairs $(g,X)$ of spectra (by which we mean a product of two copies of the category of spectra)
defining ``$\mathscr{L}$-algebra $g$ and $(\mathscr{L},g)$-module $X$". Now consider the monad
$$E(A,X)=(\widetilde{C}_{1+}A, E_1(A,X))$$
in pairs of spectra $(A,X)$ defining ``$\widetilde{\mathscr{C}}_{1+}$-algebra $A$ and left
$(\widetilde{\mathscr{C}}_{1+},A)$-module $X$". Then we have
$$D_1(g,X)=\bigvee_{n\geq 1} \mathscr{L}(n)\wedge_{\Sigma_{n-1}}g^{\wedge (n-1)}\wedge X
$$
and
$$E_1(A,X)=\bigvee_{n\geq 1} \widetilde{\mathscr{C}}_{1+}^{L}(n)\wedge_{\Sigma_{n-1}}A^{\wedge (n-1)}\wedge X.
$$
One checks from the definition of the map $\mathscr{C}_{(k+1)+}\r \mathscr{J}\wedge \mathscr{C}_{k+}$
that the map $\mathscr{L}\r\mathscr{C}_{1+}$
actually induces a map
\beg{eprdl2}{\diagram D_1(g,X)\rto^\sim & E_1(g,X)\enddiagram}
which is an equivalence. (This is plausible because of Proposition \ref{psslie2}; however, note that more is being claimed here,
namely certain diagram formed using the explicitly defined maps commutes on the nose.) From this, we obtain a map
of monads
\beg{eprdl1}{D\r E}
which is an equivalence on the second coordinate. Now the two functors between the categories of 
$g$-representations and left $Ug$-modules are pushforward, and pullback along \rref{eprdl1}, composed with the
unit map
$$g\r u^*Ug.$$
When $(g,X)$ is a free $D$-algebra, \rref{eprdl2} implies that the map on the second coordinate induced by 
\rref{eprdl1} is an equivalence. For a general cofibrant $D$-algebra $(g,X)$, we can use the
two-sided bar construction $B(D,D,(g,X))$ and the associated filtration spectral sequence in the standard
way. 

The proof for right modules is analogous.
\end{proof}

\vspace{3mm}
The example given in the following proposition is fundamental for the development of our theory. By an
{\em abelian} Lie algebra we shall mean an $S$-Lie algebra equivalent to an $S$-Lie algebra 
obtained by pullback from the map of $\mathscr{S}$-operads
$$\mathscr{L}\r \mathscr{E}.$$
(Perhaps, a more precise term than ``abelian" would be ``$E_\infty$", but in this context it is overloaded.)

\begin{proposition}
\label{prdl2}
Let $g$ be a cofibrant abelian Lie algebra and let $R$ be an $E_\infty$-algebra (i.e. a $\mathscr{C}_{\infty +}$-algebra). 
Then a map of spectra
$$\lambda:g\r R$$
canonically determines a $g$-representation with underlying spectrum $R$.
\end{proposition}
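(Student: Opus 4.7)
The plan is to exhibit $R$ as a representation of itself via its own Lie algebra structure (the ``adjoint representation'') and then use $\lambda$ to pull the action back to $g$. The only subtle point is that $\lambda$ must be promoted from a map of spectra to a morphism of $S$-Lie algebras, which is where the abelianness hypothesis is used.

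First, I observe that any $E_\infty$-algebra $R$ inherits an $\mathscr{L}$-algebra structure via pullback along the composite operad map $\mathscr{L}\r \mathscr{C}_{1+}\r \mathscr{C}_{\infty+}$, namely $q\circ u$ in the notation of Proposition~\ref{psslie1}. By that proposition, this composite is equivalent, in the derived category of operads, to $i\circ p:\mathscr{L}\r \mathscr{E}\r \mathscr{C}_{\infty+}$. Hence the induced pullback on derived categories of algebras satisfies $(q\circ u)^*\simeq p^*\circ i^*$, where $i^*$ simply records the underlying spectrum. In particular, $R$ viewed as an $\mathscr{L}$-algebra is abelian in the sense of the definition preceding the proposition.

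Next, the map of spectra $\lambda:g\r R$ lifts canonically to a morphism of $\mathscr{L}$-algebras in the derived category. Indeed, $\mathscr{E}$-algebras coincide with spectra, and the trivialization functor $p^*$ from spectra to $\mathscr{L}$-algebras is fully faithful on homotopy categories, since its counit $p_\sharp p^*\r\operatorname{id}$ is an equivalence (the latter because $\mathscr{E}$ is initial among operads). Since both $g$ and $R$ are abelian $\mathscr{L}$-algebras, each is equivalent to $p^*$ of its underlying spectrum, so by the adjunction $p_\sharp\dashv p^*$ the set of maps $g\r R$ in the derived category of $\mathscr{L}$-algebras is in bijection with the set of maps between their underlying spectra. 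In particular, $\lambda$ gives rise to such a morphism.

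Finally, every $\mathscr{L}$-algebra $h$ is tautologically an $(\mathscr{L},h)$-module: the required structure maps $\mathscr{L}(n)\wedge h^{\wedge(n-1)}\wedge h\r h$ are the algebra structure maps $\mathscr{L}(n)\wedge h^{\wedge n}\r h$ with one input reinterpreted as the module input, and the module axioms follow immediately from the algebra axioms. Applying this to $h=R$ and precomposing the $n-1$ algebra-input factors with the Lie algebra map supplied by $\lambda$ yields an $(\mathscr{L},g)$-module structure on $R$, which is the desired $g$-representation.

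The step I expect to be the main obstacle is converting the derived equivalences from Proposition~\ref{psslie1} into strictly commutative diagrams at the level of cofibrant operads and algebras, so that the output is an honest operad-module structure on $R$ and not merely one specified up to coherent homotopy. This requires cofibrant replacements chosen in a mutually compatible way, which the preceding subsections make available, but checking the coherences (for instance, via a two-sided bar construction as in the proof of Proposition~\ref{pdrl1}) is the genuinely technical part.
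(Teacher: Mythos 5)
There is a genuine gap, and it is not the rectification issue you flag at the end but the choice of tautological module structure in your third step. For the Lie operad, the self-module structure of an algebra $h$ obtained by ``reinterpreting one input as the module input'' is the \emph{adjoint} representation, and you are applying it to $R$ equipped with an $\mathscr{L}$-algebra structure that you have just argued is abelian, i.e.\ equivalent to $p^*$ of its underlying spectrum. On that abelian model the structure maps $\mathscr{L}(n)\wedge R^{\wedge(n-1)}\wedge R\r R$ factor through $\mathscr{E}(n)=*$ for $n\geq 2$, so the restricted $(\mathscr{L},g)$-module you produce is the \emph{trivial} representation, independent of $\lambda$. The classical picture makes this unavoidable: for a commutative algebra $R$ and a Lie map $\lambda:g\r R$, pulling back the adjoint action gives $x\cdot r=[\lambda(x),r]=0$, whereas the character action is $x\cdot r=\lambda(x)r$, which uses the \emph{multiplication} of $R$ rather than its bracket. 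Since the representations $S^\lambda$ constructed here must distinguish different $\lambda$ (cf.\ Theorem \ref{tchb}), a construction landing in the trivial representation cannot be the intended one. The paper's proof avoids this by never passing to the adjoint action: $\lambda$ is adjointed to a map of $\mathscr{C}_{\infty+}$-algebras $C_{\infty+}g\r R$, $R$ is viewed as a module over itself via its $E_\infty$-multiplication, this is restricted to a $(\mathscr{C}_{1+},Ug)$-module along the comparison map $Ug\r q^*C_{\infty+}g$ coming from \rref{eeprdl2} --- this is the one place abelianness of $g$ is used --- and Proposition \ref{pdrl1} then converts left $Ug$-modules into $g$-representations.

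A secondary problem is your full-faithfulness claim for $p^*$. Initiality of $\mathscr{E}$ concerns the unique map $\mathscr{E}\r\mathscr{L}$ (the forgetful/free adjunction), not $p:\mathscr{L}\r\mathscr{E}$; and the \emph{derived} counit $\mathbf{L}p_\sharp p^*\r Id$ is the derived abelianization of a trivial $\mathscr{L}$-algebra, which is governed by the Koszul dual of $\mathscr{L}$ and is not the identity. Fortunately you do not actually need the bijection --- applying $p^*$ to the underlying map of spectra already yields a map of trivial $\mathscr{L}$-algebras --- but as written this step is both false and irrelevant to the real difficulty above.
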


When $R$ is equivalent to $S$ as an $E_\infty$-algebra, we shall refer to such a representation as a
{\em character}, and denote it by $S^\lambda$. For a general $E_\infty$-algebra $R$, we will speak more generally 
of {\em $R$-characters}, and use the notation $R^\lambda$.

\vspace{3mm}
\noindent
{\bf Proof of Proposition \ref{prdl2}:} Consider the diagram \rref{epsslie*}. As usual, we obtain a morphism
\beg{eeprdl2}{u_\sharp p^*\r q^*i_\sharp
}
from its adjoint
$$p^*\r u^*q^*i_\sharp=p^*i^*i_\sharp,$$
which is $p^*$ composed with the unit of the adjunction $(i_\sharp, i^*).$ Now \rref{eeprdl2} for an abelian cofibrant
$S$-Lie algebra $g$ can be written as a map of $\mathscr{C}_{1+}$-algebras
\beg{eug}{Ug\r q^*C_{\infty +} g}
where $C_{\infty +}$ is the monad associated to $\mathscr{C}_{\infty +}$. 
Now an operad algebra is always an operad module over
the same algebra. Thus, a map of spectra $g\r R$ determines by adjunction a map of $\mathscr{C}_{\infty +}$-algebras
$$C_{\infty +} g\r R,$$
and hence a $(\mathscr{C}_{\infty +}, C_{\infty +} g)$-module structure on $R$. By adjunction, this then determines a
$(\mathscr{C}_{1+}, q^*C_{\infty +} g)$-module structure on $R$, hence, by \rref{eug}, a
$(C_{1+}, Ug)$-module structure and hence a left (or alternately right) $Ug$-module structure. Now use
Proposition \ref{pdrl1}.
\qed

\vspace{3mm}
\begin{proposition}(The projection formula)\label{pproj}
Returning to the diagram \rref{epsslie*}, on a cofibrant $\mathscr{E}$-algebra $X$, the canonical morphism
$$u_\sharp p^*X\r q^*i_\sharp X$$
(alternately, thinking of $X$ as an abelian Lie algebra, $UX\r C_{\infty +} X$), is an equivalence.
\end{proposition}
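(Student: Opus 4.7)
The plan is to compute both sides explicitly as spectra (remembering the $\mathscr{C}_{1+}$-algebra structures) and to match them summand by summand. For the right-hand side, since $\mathscr{C}_{\infty+}$ is an $E_\infty$-operad, each space $\mathscr{C}_{\infty+}(n)$ is $\Sigma_n$-free and contractible, so as a spectrum
$$q^* i_\sharp X \simeq \bigvee_{n \geq 0} (\mathscr{C}_{\infty+}(n))_+ \wedge_{\Sigma_n} X^{\wedge n} \simeq \bigvee_{n \geq 0} E\Sigma_{n+} \wedge_{\Sigma_n} X^{\wedge n},$$
and $q^*$ merely forgets structure, so this also computes the underlying spectrum.

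For the left-hand side, I would use the two-sided operadic bar construction
$$u_\sharp p^* X \simeq |B_\bullet(\mathscr{C}_{1+}, \mathscr{L}, p^* X)|,$$
which is valid because $\mathscr{L}$ and $\mathscr{C}_{1+}$ are cofibrant and $X$ is a cofibrant spectrum. Because the $\mathscr{L}$-structure on $p^* X$ factors through the projection $p: \mathscr{L} \to \mathscr{E}$, the only non-trivial action on the $X$-tensor factors is via the identity in arity $1$. Filtering the realization by the total number of $X$-inputs, the $n$-th filtration quotient takes the form $W(n) \wedge_{\Sigma_n} X^{\wedge n}$, where $W(n)$ is a simplicial $\Sigma_n$-spectrum assembled from $\mathscr{C}_{1+}$ and $\mathscr{L}$ via the composition-operadic bar complex.

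The key step will be to identify $W(n) \simeq E\Sigma_{n+}$, which is precisely the spectrum-level Poincar\'e--Birkhoff--Witt statement: the universal enveloping $\mathscr{C}_{1+}$-algebra of the trivial Lie algebra on $X$ agrees with the free $E_\infty$-algebra on $X$. For this, one uses the Koszul-duality description of $\mathscr{L}$ built into its definition as $\operatornamewithlimits{holim}_\leftarrow \mathscr{T}^k \wedge \mathscr{C}_{(k+1)+}$, together with Proposition~\ref{psslie2} (which identifies $\mathscr{L}(n)$ stably with $\bigvee_{(n-1)!} S^0$) and the homotopy-commutativity of the diagram in Proposition~\ref{psslie1}, to match the bar complex of $p^* X$ arity-by-arity with a $\Sigma_n$-free resolution of $S^0$. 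Finally, one verifies that the canonical comparison $u_\sharp p^* \to q^* i_\sharp$ arising from the square implements these term-by-term equivalences.

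The main obstacle will be controlling the $\Sigma_n$-action on $\mathscr{L}(n)$: stably it is only a wedge of $(n-1)!$ spheres, but the $\Sigma_n$-action is the subtle classical Lie representation, and one must show it interacts correctly with the bar differentials to produce a $\Sigma_n$-free contractible spectrum after smashing with $X^{\wedge n}$. This is the spectral analogue of the classical statement that the Koszul bar complex associated to the $(\mathrm{Comm}, \mathrm{Lie})$-pair is acyclic, and one expects it to follow from the explicit Arone--Kankaanrinta model of the operadic suspension developed in Section~\ref{sbs}.
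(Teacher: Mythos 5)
Your reduction is reasonable as far as it goes: computing $q^*i_\sharp X$ as the free $E_\infty$-algebra $\bigvee_n E\Sigma_{n+}\wedge_{\Sigma_n}X^{\wedge n}$, modeling $u_\sharp p^*X$ by the two-sided bar construction $B(C_{1+},L,X)$, and filtering by arity so that the comparison reduces to identifying the filtration quotients. But the identification $W(n)\simeq E\Sigma_{n+}$, equivariantly in the sense needed for $\wedge_{\Sigma_n}X^{\wedge n}$, \emph{is} the content of the proposition, and your proposed route to it does not close. Proposition \ref{psslie2} is a purely non-equivariant statement --- it identifies $\mathscr{L}(n)$ with $\bigvee_{(n-1)!}S^0$ as a spectrum with no control of the $\Sigma_n$-action --- and neither the homotopy-commutativity of \rref{epsslie*} nor the Arone--Kankaanrinta model supplies the missing equivariant information. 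Mod $p$, the homotopy orbits of $X^{\wedge n}$ against the arity-$n$ pieces of the bar complex carry Dyer--Lashof/Steenrod-operation phenomena that have no counterpart in the classical acyclicity of the $(\mathrm{Comm},\mathrm{Lie})$ Koszul complex, so ``this is the spectral analogue of the classical statement'' is precisely the assertion that needs an argument. You flag this yourself as ``the main obstacle'' and then express an expectation rather than a proof.

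The paper closes this gap by a different route: it first reduces to $X$ a finite cell spectrum by a colimit argument, observes that $B(C_{1+},L,X)\r C_{\infty+}X$ is a wedge of maps of bounded-below spectra so that homology detects equivalences, and then checks homology with $\Q$-coefficients (where it is the ordinary PBW theorem) and with $\Z/p$-coefficients. The mod $p$ step is where the real input lies: the mod $p$ homology of the free $S$-Lie algebra $LY$ is the $E_\infty$-Quillen ($\Gamma$-) homology of the trivial algebra $H_*(Y;\Z/p)$, computed by Miller and Robinson--Whitehouse as a free Lie algebra on the Koszul dual of the Dyer--Lashof algebra; a Grothendieck spectral sequence then yields the mod $p$ isomorphism. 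If you want to salvage your filtration argument, you would still need an input of this strength to pin down the $\Sigma_n$-homotopy type of the arity-$n$ piece of $B(\mathscr{C}_{1+},\mathscr{L},\mathscr{E})$; as written, your proposal assumes the theorem at exactly that point.
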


\begin{proof}
By a colimit argument, it suffices to consider the case when $X$ is a finite cell spectrum. By modeling the map of
interest as
\beg{eproj1}{B(C_{1+},L,X)\r C_{\infty +}X,
}
we see that \rref{eproj1} is, in the category of spectra, a wedge sum of maps of bounded below spectra, so
we may use homology. In homology with coefficients in $\Q$, \rref{eproj1} induces an isomorphism just by the ordinary
PBW theorem. With coefficients in $\Z/p$, the homology of $LY$ is $E_\infty$-Quillen homology of the abelian
commutative $\Z/p$-algebra $H_*(Y,\Z/p)$. This was calculated in \cite{hm, wr}. The answer is the free Lie algebra
on the Koszul dual to the Dyer-Lashof algebra. Essentially, they can be thought of as Steenrod operations without the 
admissibility relations. Using the Grothendieck spectral sequence as in \cite{hm1}, we see that \rref{eproj1}
induces an isomorphism in $\text{mod}\; p$ homology.
\end{proof}

\vspace{3mm}
\begin{proposition}
\label{ppbw}
Suppose $X$, $X_1,\dots,X_m$ are $\mathscr{L}$-algebras which are finite as spectra, and 
$$\phi_i:X_i\r X$$
are morphisms of $\mathscr{L}$-algebras such that the map of spectra
$$\bigvee_{i=1}^{m}\phi_i: \bigvee_{i=1}^{m}X_i\r X$$
is an equivalence of spectra. Then the map of spectra
$$\bigwedge_{i=1}^{m} UX_i \r UX$$
given by $U\phi_i$ and $\mathscr{C}_{1+}$-multiplication is an equivalence.
\end{proposition}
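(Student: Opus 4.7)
The plan is to reduce to the abelian (i.e., $E_\infty$) case, in which the statement follows at once from the projection formula (Proposition \ref{pproj}) together with the fact that the free $E_\infty$-algebra functor $C_{\infty+}$ converts wedges of spectra into smash products of $E_\infty$-algebras. Indeed, if the $X_i$ are cofibrant spectra regarded as abelian Lie algebras (pulled back along $p:\mathscr{L}\r\mathscr{E}$), then
\[
\bigwedge_i UX_i \;\simeq\; \bigwedge_i C_{\infty+}X_i \;\simeq\; C_{\infty+}\bigl(\bigvee_i X_i\bigr) \;\simeq\; C_{\infty+}X \;\simeq\; UX,
\]
where the middle equivalence uses that $C_{\infty+}$ is a left adjoint and smash product is the coproduct in commutative $S$-algebras, and the outer equivalences are instances of the projection formula.

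For a general $\mathscr{L}$-algebra $g$, I would model $Ug$ by the two-sided monadic bar construction $B(C_{1+},L,g)$ used in the proof of Proposition \ref{pproj}, and equip it with the arity filtration that counts how many Lie generators have been applied; this is the stable analogue of the classical PBW filtration. A bracket operation $\mathscr{L}(n)\wedge g^{\wedge n}\r g$ with $n\geq 2$ strictly decreases the number of generators, so on associated graded the $\mathscr{L}$-action on $g$ degenerates to its pullback along $p:\mathscr{L}\r\mathscr{E}$, i.e., $g$ regarded as abelian. By the projection formula, $\operatorname{gr} Ug\simeq C_{\infty+}g$, and similarly $\operatorname{gr}\bigwedge_i UX_i\simeq\bigwedge_i C_{\infty+}X_i$. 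The comparison map $\bigwedge_i UX_i\r UX$ preserves these filtrations, and on the associated graded reduces exactly to the abelian case handled above, which is an equivalence by the hypothesis $\bigvee_i X_i\simeq X$.

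It remains to pass from equivalence on associated graded to equivalence of limits, and this is where the finiteness hypothesis enters: the $n$-th filtration quotient on either side is built from $\mathscr{L}(n)\wedge_{\Sigma_n}Y^{\wedge n}$ for finite spectra $Y$, so its connectivity tends to $\infty$ with $n$, and the filtration spectral sequence converges strongly. I expect the principal technical obstacle to be a careful identification of the associated graded with the abelian case at the level of $\Sigma_n$-equivariant spectra. Following the strategy of Proposition \ref{pproj}, I would verify this identification homologically: over $\Q$ by invoking classical PBW for ordinary Lie algebras, and at each prime $p$ by combining the computation of $E_\infty$-Quillen homology from \cite{hm, wr} with the Grothendieck spectral sequence of \cite{hm1}. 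The finiteness assumption guarantees that both $UX$ and $\bigwedge_i UX_i$ are wedges of bounded-below spectra, making the homological comparison legitimate.
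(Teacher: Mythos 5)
Your proposal is correct in outline and, at bottom, rests on the same ingredients as the paper's (one-sentence) proof, which says only that the statement follows ``by the same method as the previous one, using a calculation of homology'' --- i.e., the method of Proposition \ref{pproj}: model everything by two-sided bar constructions, use finiteness to see that the relevant spectra are wedges of bounded-below spectra so that integral homology detects equivalences, and then compute rationally via the classical PBW theorem and mod $p$ via the $E_\infty$-Quillen homology computations of \cite{hm,wr} together with the Grothendieck spectral sequence of \cite{hm1}. What you add is an explicit PBW (arity) filtration on $B(\mathscr{C}_{1+},L,g)$ whose associated graded reduces to the abelian case; the paper instead works directly in homology, where the algebraic PBW theorem (and its Dyer-Lashof-enriched mod $p$ analogue) already encodes that filtration. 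Your organization is arguably the cleaner one, since it isolates exactly where the hypothesis $\bigvee_i X_i\simeq X$ enters, namely on the associated graded, where the map becomes $\bigwedge_i C_{\infty+}X_i\r C_{\infty+}X$ and the coproduct property of $C_{\infty+}$ applies.

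One correction: your convergence step is misstated. The claim that the connectivity of $\mathscr{L}(n)\wedge_{\Sigma_n}Y^{\wedge n}$ tends to $\infty$ is false in general, since a finite spectrum $Y$ may have cells in negative dimensions, in which case $\operatorname{conn}(Y^{\wedge n})\r -\infty$. Fortunately no such estimate is needed: the PBW filtration is increasing and exhaustive, so equivalences on associated graded propagate to each finite stage $F_n$ by induction on the cofiber sequences $F_{n-1}\r F_n\r \operatorname{gr}_n$, and then to the colimit. Finiteness is still essential, but for the reason you give at the end --- it guarantees that each arity summand (equivalently, each filtration quotient) is bounded below, so that the homological identification of the associated graded with the abelian case, over $\Q$ and over $\Z/p$, is legitimate.
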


\begin{proof}
This proposition is proved by the same method as the previous one, using a calculation of homology.
\end{proof}

\vspace{3mm}
\subsection{Products}\label{ssproducts}
Let $\mathscr{C}$ be a cofibrant operad over $S$, and $R$ a $\mathscr{C}$-algebra. The first goal of this subsection 
is to define the \emph{operad tensor product} of modules over $(\mathscr{C}, R)$. 

Let $C$ be the associated monad of $\mathscr{C}$, and let $X, Y, Z$ be spectra. 
Recall that 
$$ CX = \bigvee_{n \geq 0} \mathscr{C}(n) \wedge_{\Sigma_n} X^{\wedge n}. $$
Define the functors
$$C_{(1)}(X,Y)=\bigvee_{n\geq 0} \mathscr{C}(n+1)\wedge_{\Sigma_n} X^{\wedge n}\wedge Y,$$
$$C_{(2)}(X, Y, Z)=\bigvee_{n\geq 0}\mathscr{C}(n+2)\wedge_{\Sigma_n} X^{\wedge n}\wedge Y\wedge Z.$$
Then define
$$\underline{C}_{(1)}(X, Y)=(CX,C_{(1)}(X, Y)) .$$
This is a monad in the category of pairs of spectra, defining ``$\mathscr{C}$-algebra $R$, $(\mathscr{C},R)$-module''. 
We also define
$$\underline{C}_{(1,1)}(X, Y, Z)=(CX,C_{(1)}(X, Y),C_{(1)}(X, Z)) . $$
This is a monad in the category of
triples of spectra, defining ``$\mathscr{C}$-algebra $R$, $(\mathscr{C},R)$-modules $M,N$''. There is also 
the functor
$$\underline{C}_{(2)}(X, Y, Z)=(CX,C_{(2)}(X, Y, Z)) .$$
This is now a (left $\underline{C}_{(1)}$, right  $\underline{C}_{(1,1)}$)-functor from triples of spectra to pairs of
spectra.

Now let $R$ be a $\mathscr{C}$-algebra, and let 
$M$ and $N$ be $(\mathscr{C}, R)$-modules. 
The {\em operad tensor product of $(\mathscr{C},R)$-modules $M$, $N$} is defined as
$$M\otimes_{(\mathscr{C},R)} N = B(\underline{C}_{(2)},\underline{C}_{(1,1)}, (R,M,N)).$$
It is a $(\mathscr{C}, B(C,C,R))$-module. Without further work, this operation does not have good point-set properties, 
but it works in the derived category.

In the derived category, the operad tensor product is commutative, since the two variables involved
play symmetrical roles. For example, for bimodules $M$, $N$ over an associative ring $A$, 
in the cofibrant case, their operadic tensor product using the associative operad has the homotopy type
$(M\wedge_A N)\vee (N\wedge_A M)$. However, the question of associativity and unitality is tricky.
In general, neither holds, although we may readily define
$$\left({\bigotimes_{i=1}^{n}}\right)_{(\mathscr{C},R)} M_i$$
for any $n\geq 0$ in the same way, and we have canonical comparison maps in the derived category
\beg{eprodocts+}{
\left({\bigotimes_{j=1}^{k}}\right)_{(\mathscr{C},R)}\left(
\left({\bigotimes_{i=1}^{n_j}}\right)_{(\mathscr{C},R)} M_{j,i}
\right)
\r 
\left({\bigotimes_{i,j}}\right)_{(\mathscr{C},R)} M_{i,j}.
}
For $\mathscr{C}=\mathscr{C}_{\infty+}$, the comparison maps \rref{eprodocts+} are equivalences 
(the first version of \cite{ekmm} was written using this fact), and it follows from
Koszul duality that the operad tensor product over $\mathscr{L}$ is associative and unital also. On the other hand,
it is easy to observe that the operad tensor product of an empty set of $(\mathscr{C},R)$-modules is always equivalent to
$R$. 
\vspace{3mm}

Now we will specialize to the case where $\mathscr{C} = \mathscr{L}$. We will use lower-case $g$ to denote 
a Lie algebra (instead of the general $R$). In this case, we will see that $g$ is {\em not} the unit for $\otimes_{(\mathscr{L},g)}$.
(Instead, the``trivial representation" over $g$ is the unit for the operadic tensor product).

\begin{lemma}
\label{lprod1}
For representations $M$, $N$ of a cofibrant $S$-Lie algebra $g$,
$$M\otimes_{(\mathscr{L},g)}N\sim M\wedge N$$
as spectra.
\end{lemma}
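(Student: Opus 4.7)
The plan is to compute the second coordinate of the bar construction $B(\underline{L}_{(2)}, \underline{L}_{(1,1)}, (g, M, N))$ defining the operad tensor product. First I handle the case where $(g, M, N) = \underline{L}_{(1,1)}(X, Y, Z) = (LX, L_{(1)}(LX, Y), L_{(1)}(LX, Z))$ is a free $\underline{L}_{(1,1)}$-algebra on spectra $X, Y, Z$. The standard extra-degeneracy argument for bar constructions of free algebras collapses this to the second coordinate of $\underline{L}_{(2)}(X, Y, Z)$, namely
$$L_{(2)}(X, Y, Z) = \bigvee_{n \geq 0} \mathscr{L}(n+2)\wedge_{\Sigma_n} X^{\wedge n}\wedge Y\wedge Z.$$
On the other hand, since $U$ and $L$ are both left adjoints, $U(LX)$ is the free $\mathscr{C}_{1+}$-algebra on $X$, which unpacks to $\bigvee_k X^{\wedge k}$ using that $\mathscr{C}_{1+}(k)$ carries a free $\Sigma_k$-action. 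By Proposition~\ref{pdrl1}, $M\simeq U(LX)\wedge Y$ and $N\simeq U(LX)\wedge Z$, so
$$M\wedge N\simeq \bigvee_{k\geq 0} (k+1)\, X^{\wedge k}\wedge Y\wedge Z,$$
where the coefficient $k+1$ counts pairs $(a,b)$ with $a+b=k$.

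It then suffices to identify $L_{(2)}(X, Y, Z)$ with this wedge. By Proposition~\ref{psslie2}, $\mathscr{L}(k+2)\simeq\bigvee_{(k+1)!} S^0$, and this splitting carries the $\Sigma_{k+2}$-action inherited from the classical representation $\mathrm{Lie}(k+2) = \mathrm{Ind}_{C_{k+2}}^{\Sigma_{k+2}}\chi$ for a primitive character $\chi$ of $C_{k+2}$. Restricting to $\Sigma_k\subset\Sigma_{k+2}$ (permuting the first $k$ slots), a Mackey decomposition yields a free $\Sigma_k$-representation with exactly $k+1$ orbits: every non-identity element of any cyclic subgroup $xC_{k+2}x^{-1}\subset\Sigma_{k+2}$ is fixed-point-free (being a nontrivial power of a $(k+2)$-cycle), so $\Sigma_k\cap xC_{k+2}x^{-1} = \{e\}$, and a double-coset count gives $|\Sigma_{k+2}|/(|\Sigma_k|\cdot|C_{k+2}|) = k+1$. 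Hence $\mathscr{L}(k+2)\wedge_{\Sigma_k} X^{\wedge k}\simeq (k+1)\, X^{\wedge k}$, matching $M\wedge N$. For general $(g, M, N)$, apply the free case level-wise to the canonical simplicial resolution $\underline{L}_{(1,1)}^\bullet(g, M, N)$ by free $\underline{L}_{(1,1)}$-algebras; this produces a simplicial equivalence between the bar construction and the level-wise smash of the individual bar resolutions of $M$ and $N$, which realizes to $M\wedge N$.

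The main obstacle is lifting the representation-theoretic $\Sigma_k$-freeness to the stable homotopy level: Proposition~\ref{psslie2} asserts only the equivalence $\mathscr{L}(k+2)\simeq\bigvee_{(k+1)!} S^0$ as spectra, so one must verify that the underlying splitting respects enough of the $\Sigma_k$-action for the smash-over-$\Sigma_k$ identification to hold homotopically. This amounts to tracking $\Sigma_{k+2}$-equivariance through the Gysin cofibration sequences used in the proof of Proposition~\ref{psslie2}, together with checking equivariance of the finite-cell obstruction-theoretic splitting there; once this is done, the combinatorial count above transports to a genuine equivalence of spectra.
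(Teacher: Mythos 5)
Your proposal is correct and follows essentially the same route as the paper: establish the free case by combining Proposition \ref{psslie2} with the fact that the restriction of $\mathscr{L}(k+2)$ to $\Sigma_k$ is free with $k+1$ orbits (your Mackey double-coset count is a more explicit version of the paper's $\Sigma_{n-1}/\Sigma_{n-2}$ coset count), then pass to the general case by the bar construction, using that the comparison map is a map of right $\underline{L}_{(1,1)}$-functors. The equivariance issue you flag at the end is exactly what the paper compresses into the phrase ``this requires Proposition \ref{psslie2} and some care in matching terms.''
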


\begin{proof}
One checks that the natural composition map
\beg{elprod1}{\mathscr{L}(2)\wedge L_{(1)}(g,M)\wedge L_{(1)}(g,M)\r L_{(2)}(g,M,N)}
is an equivalence (again, this requires Proposition \ref{psslie2} and some care in matching terms). 
The key observation is that
$$L_{(1)}(g,M)\simeq \bigvee_{k\geq 1} M,$$
whereas
$$L_{(2)}(g,M,N)=\bigvee_{n\geq 2} (\bigvee_{n-1} M\wedge N)=
(\bigvee_{k\geq 1}M)\wedge (\bigvee_{\ell\geq 1} N)
$$
(as $\Sigma_{n-1}/\Sigma_{n-2}$ is a set of $n-1$ elements).

This establishes
the statement of the Lemma in the case of a free module over a free Lie algebra.
Now \rref{elprod1} is, by definition, a right $L_{(1,1)}$-functor, so we can use the usual bar construction argument for 
the general case.
\end{proof}

\vspace{3mm}
From now on, we shall denote the product ``$\otimes_{(\mathscr{L},g)}$'' also simply as ``$\wedge$".

\vspace{3mm}
\begin{theorem} 
\label{tchar}
(Products of characters) Let $g$ be a cofibrant abelian $S$-Lie algebra, let $R_1,R_2$ be cofibrant $E_\infty$-algebras, and let 
$$\phi_i:g\r R_i, \; i=1,2$$
be maps of spectra. Define
\beg{echarpsi}{\psi=\phi_1\wedge 1 + 1\wedge \phi_2:g\r R_1\wedge R_2.}
Then we have a canonical equivalence in the derived category
\beg{echar1}{(R_1)^{\phi_1}\otimes_{(\mathscr{L},g)} (R_2)^{\phi_2} \sim (R_1\wedge R_2)^\psi.
}
\end{theorem}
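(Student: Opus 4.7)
The plan is to match both sides as $g$-representations on $R_1 \wedge R_2$ by passing through the equivalence with $Ug$-modules and exploiting, for abelian $g$, the cocommutative Hopf structure on $Ug$ coming from the free $E_\infty$-algebra. First, Lemma \ref{lprod1} gives an equivalence of underlying spectra
\[
(R_1)^{\phi_1} \otimes_{(\mathscr{L}, g)} (R_2)^{\phi_2} \simeq R_1 \wedge R_2,
\]
which is also the underlying spectrum of $(R_1 \wedge R_2)^\psi$. Thus it suffices to compare the two induced $g$-representation structures on $R_1 \wedge R_2$ in the derived category, and by Proposition \ref{pdrl1} we may equivalently compare the associated $Ug$-module structures.

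By Proposition \ref{prdl2} and the projection formula (Proposition \ref{pproj}), for abelian $g$ we have a canonical equivalence $Ug \simeq C_{\infty +}g$, and under it the character $(R_i)^{\phi_i}$ corresponds to the $Ug$-module structure on $R_i$ determined by the $E_\infty$-algebra map $\bar{\phi}_i \colon Ug \simeq C_{\infty +}g \to R_i$ adjoint to $\phi_i$. Since $C_{\infty +}(-)$ is a left adjoint, the diagonal $g \to g \vee g$ induces a canonical comultiplication
\[
\Delta \colon Ug \simeq C_{\infty +}g \longrightarrow C_{\infty +}(g \vee g) \simeq C_{\infty +}g \wedge C_{\infty +}g \simeq Ug \wedge Ug,
\]
making $Ug$ a cocommutative Hopf $E_\infty$-algebra for which the elements of $g \subset Ug$ are primitive: $\Delta(x) \simeq x \wedge 1 + 1 \wedge x$ for $x \in g$.

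The principal obstacle is to verify that for abelian $g$ the operadic tensor product $\otimes_{(\mathscr{L}, g)}$ corresponds, under the equivalence of Proposition \ref{pdrl1}, to the smash product of underlying spectra equipped with the \emph{diagonal} $Ug$-action through $\Delta$: that is, the $Ug$-action on $M \wedge N$ is the composite $Ug \xrightarrow{\Delta} Ug \wedge Ug \to M \wedge N$ obtained from the separate $Ug$-actions on $M$ and on $N$. The proof of this claim proceeds by unwinding the two-sided bar construction defining $\otimes_{(\mathscr{L}, g)}$ in Section \ref{ssproducts}, using that the $\mathscr{L}$-action on an abelian algebra factors up to homotopy through the $\mathscr{E}$-operad (cf.\ Proposition \ref{psslie1}), so that the structure maps $\mathscr{L}(n+2) \wedge g^{\wedge n} \wedge M \wedge N \to M \wedge N$ distribute symmetrically between $M$ and $N$ exactly as the primitive coproduct on $Ug$ predicts; this is the $S$-analogue of the classical observation that the tensor product of representations of an abelian Lie algebra uses the primitive Hopf coproduct on its universal enveloping algebra.

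Granting this identification, the $Ug$-module structure on $R_1 \wedge R_2$ underlying the left-hand side of (\ref{echar1}) is given by the composite of $E_\infty$-algebra morphisms
\[
Ug \xrightarrow{\;\Delta\;} Ug \wedge Ug \xrightarrow{\;\bar{\phi}_1 \wedge \bar{\phi}_2\;} R_1 \wedge R_2.
\]
Being a morphism of $E_\infty$-algebras, this composite is determined up to equivalence by its restriction to the generating spectrum $g \subset Ug$, on which it evaluates to $\phi_1 \wedge 1 + 1 \wedge \phi_2 = \psi$. By the universal property of $C_{\infty +}(-)$ it therefore coincides with $\bar{\psi}\colon Ug \to R_1 \wedge R_2$, which is the $Ug$-module structure defining the character $(R_1 \wedge R_2)^\psi$. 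This produces the desired canonical equivalence in the derived category.
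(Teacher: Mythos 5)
Your proposal is correct and follows essentially the same route as the paper: both identify the operadic tensor product with the external smash product pulled back along a (non-rigid) Hopf comultiplication $Ug\r Ug\wedge Ug$, and both then use abelianness and the equivalence $Ug\simeq C_{\infty+}g$ to recognize that comultiplication as the primitive one induced by $g\r g\times g\simeq g\vee g$, so that the composite action on $R_1\wedge R_2$ restricts to $\psi$ on $g$. The only organizational difference is that the paper establishes the comparison of $\otimes_{(\mathscr{L},g)}$ with the pullback of the $\wedge_1$-product (via the maps $L_{(2)}\r C^{L}_{1+(2)}$ and the two-sided bar constructions) for a general $g$, reserving the abelian hypothesis solely for computing the coproduct, whereas you invoke abelianness already in that comparison step; this does not affect correctness.
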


\vspace{3mm}
Note that in our category, a smash product of
$E_\infty$-algebras is canonically an $E_\infty$-algebra. Also, in \rref{echarpsi}, the sum refers to the sum in
the derived category of spectra, which is an additive category.

\vspace{3mm}



We will need a number of steps to prove the theorem. 
First, we introduce a $\otimes$-product of left $(C_{1+},R_i)$-modules, $i=1,2$, which coincides with
the {\em external} smash-product in the case $\mathscr{C}_{\infty+}$-algebras $R_i$. 

Define, for spectra $X$, $Y$,
$$C_{1+(1)}^{L}(X, Y)=\bigvee_{n\geq 0} \mathscr{C}^{L}_{1+}(n+1)\wedge_{\Sigma_n} X^n\wedge Y$$
where $\mathscr{C}^{L}_{1}(n+1)$ is the fiber of $\mathscr{C}_1(n+1)$ over the isotropy subgroup
of $n+1$ in $\Sigma_{n+1}$ (which is a copy of $\Sigma_n$). For spectra $X_1, X_2, Y, Z$, we also define
the functor
$$C^{L}_{1+(2)}(X_1,X_2,Y, Z)=
\bigvee_{k,\ell\geq 0} \mathscr{C}_{1+}^{LL}(k,\ell)\wedge_{\Sigma_k\times\Sigma_\ell}X_{1}^{\wedge k}
\wedge X_{2}^{\wedge \ell}\wedge Y \wedge Z
$$
where $\mathscr{C}_{1}^{LL}(k,\ell)\subseteq \mathscr{C}_{1}(k+\ell+2)$ is the fiber over the subgroup
of $\Sigma_{k+\ell+2}$ of permutations preserving the blocks $\{1,\dots,k\}$, $\{k+1,\dots,k+\ell\}$
and fixing $k+\ell+1$, $k+\ell+2$. This subgroup is a copy of $\Sigma_k \times \Sigma_{\ell}$.
We have a monad
in the category of pairs of spectra
$$\underline{C}_{1+(1)}^{L}(X, Y)=(C_{1+}X, C_{1+(1)}^{L}(X, Y))$$
defininig ``$\mathscr{C}_{1+}$-algebra $R$ and left $R$-module $M$".
We also have a monad in the category of quadruples of spectra
$$\underline{C}_{1+(1,1)}^L(X_1,X_2,Y_1,Y_2)=
(C_{1+}X_1,C_{1+}X_2,C_{1+}^{L}(X_1,Y_1),C_{1+}^{L}(X_2,Y_2))$$
defining ``$\mathscr{C}_{1+}$-algebra $R_i$ and left $R_i$-module $M_i$, $i=1,2$".

Recall the map $q: \mathscr{C}_{1+} \r 
\mathscr{C}_{\infty +}$ from diagram \rref{epsslie*}.

\begin{lemma}
\label{lchar1}
Let $R$ be a cofibrant $\mathscr{C}_{\infty+}$-algebra. Then the derived category of left (alternatively, right)
$q^*R$-modules is canonically equivalent to the derived category of $(\mathscr{C}_{\infty+},R)$-modules.
\end{lemma}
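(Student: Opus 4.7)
The plan is to adapt the monadic argument of Proposition~\ref{pdrl1} to this setting. With $R$ a fixed cofibrant $\mathscr{C}_{\infty+}$-algebra, I introduce the two module monads on spectra:
\[ T(M)=\bigvee_{n\geq 1}\mathscr{C}_{\infty+}(n)\wedge_{\Sigma_{n-1}}R^{\wedge(n-1)}\wedge M \]
for $(\mathscr{C}_{\infty+},R)$-modules, and
\[ T^L(M)=\bigvee_{n\geq 1}\mathscr{C}_{1+}^{L}(n)\wedge_{\Sigma_{n-1}}R^{\wedge(n-1)}\wedge M \]
for left $q^*R$-modules. The operad map $q$ together with the inclusion $\mathscr{C}_{1+}^{L}(n)\subset\mathscr{C}_{1+}(n)$ induces a natural transformation $T^L\r T$, which is a morphism of monads because $q$ is a map of operads and operadic insertion into non-distinguished slots preserves the ``$L$''-condition.

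The key geometric input is that for each $n\geq 1$, the map $\mathscr{C}_{1+}^{L}(n)\r\mathscr{C}_{\infty+}(n)$ is a $\Sigma_{n-1}$-equivariant equivalence. Indeed, $\mathscr{C}_1^{L}(n)$ is by definition the preimage of $\Sigma_{n-1}\subset\Sigma_n$ under the projection $\mathscr{C}_1(n)\r\pi_0\mathscr{C}_1(n)=\Sigma_n$, hence a disjoint union of $(n-1)!$ contractible components freely permuted by $\Sigma_{n-1}$, i.e., a model of $E\Sigma_{n-1}$. On the other hand, $\mathscr{C}_{\infty+}(n)=E\Sigma_n$ remains a free contractible $\Sigma_{n-1}$-space upon restriction along $\Sigma_{n-1}\subset\Sigma_n$ (the stabilizer of $n$). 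Both are therefore models of $E\Sigma_{n-1}$, so the map between them is an equivariant equivalence; smashing with $R^{\wedge(n-1)}\wedge M$ and passing to $\Sigma_{n-1}$-orbits preserves this equivalence on cofibrant inputs, and hence $T^L\r T$ is a termwise equivalence.

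With the termwise equivalence of monads in hand, the derived equivalence follows the pattern of Proposition~\ref{pdrl1}: for a cofibrant left $q^*R$-module $M$, I resolve $M$ by the two-sided bar construction $B(T^L,T^L,M)$ and compare with the analogous resolution $B(T,T,M)$; the termwise equivalence promotes to an equivalence on geometric realizations, producing the Quillen equivalence of derived categories. The right-module case is handled identically after substituting $\mathscr{C}_{1+}^{R}$ for $\mathscr{C}_{1+}^{L}$. The main obstacle I anticipate is the careful verification that $T^L\r T$ respects the iterated monad compositions, i.e., that the ``$L$''-slot is correctly tracked through composed operadic insertions; as in the analogous step of Proposition~\ref{pdrl1}, this is forced by the operad-map property of $q$ but requires some explicit combinatorial bookkeeping.
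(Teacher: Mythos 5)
There is a genuine gap: your ``key geometric input'' is false, and with it the claimed termwise equivalence of monads. The space $\mathscr{C}_{1}^{L}(n)$ is the preimage of $\Sigma_{n-1}\subset\Sigma_n=\pi_0\mathscr{C}_1(n)$, hence a disjoint union of $(n-1)!$ contractible components permuted freely and transitively by $\Sigma_{n-1}$; as a $\Sigma_{n-1}$-space it is therefore equivalent to the \emph{discrete} space $\Sigma_{n-1}$, not to the contractible space $E\Sigma_{n-1}$. By contrast, $\mathscr{C}_{\infty}(n)\simeq E\Sigma_n$ restricted to $\Sigma_{n-1}$ really is a model of $E\Sigma_{n-1}$. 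Consequently the $n$-th terms of your two monads are
$$\mathscr{C}_{1+}^{L}(n)\wedge_{\Sigma_{n-1}}R^{\wedge(n-1)}\wedge M\simeq R^{\wedge(n-1)}\wedge M \quad\text{versus}\quad \mathscr{C}_{\infty+}(n)\wedge_{\Sigma_{n-1}}R^{\wedge(n-1)}\wedge M\simeq \left(R^{\wedge(n-1)}\right)_{h\Sigma_{n-1}}\wedge M,$$
and the comparison map is the projection to homotopy orbits, which is not an equivalence for $n\geq 3$. This is not a repairable technicality inside your argument: the underived free functors genuinely disagree (tensor-algebra terms on one side, extended-power terms on the other), so no bar construction built out of a termwise comparison of $T^L$ and $T$ can succeed.

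The paper's proof avoids this by comparing the \emph{derived} free module functors: the map induced by $q$ is shown to be an equivalence only after forming the two-sided bar constructions $B(C_{1+(1)}^{L}(?,M),C_{1+},R)$ and $B(C_{\infty+(1)}(?,M),C_{\infty+},R)$, in which $R$ itself is resolved over the respective free-algebra monads. Both of these compute $R\wedge M$ (the free left module over a unital $A_\infty$-algebra on $M$, respectively the free operad module over an $E_\infty$-algebra on $M$), and it is this coincidence of derived free objects --- not a termwise equivalence of monads --- that drives the equivalence of derived categories; the inverse functor is then assembled from these free objects by a further bar construction. Your overall strategy (compare free objects, then bootstrap via bar resolutions as in Proposition \ref{pdrl1}) is the right shape, but the comparison must be made one bar direction deeper.
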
 

\begin{proof}
We have a map induced by $q$:
$$C_{1+(1)}^{L}(R,M)\r C_{\infty+(1)}(R,M).$$
This gives a functor from $(\mathscr{C}_{\infty+},R)$-modules to $(\mathscr{C}_{1+}^{L},R)$-modules.
Next, we note that free modules are equivalent in both categories, since for $\mathscr{C}_{\infty+}$-algebras $R$,
the canonical map
\beg{echar+}{B(C_{1+(1)}(?,M), C_{1+},R)\r
B(C_{\infty+(1)}(?,M),C_{\infty+},R)
}
is an equivalence.
Denoting the target as $\mathfrak{G}$, then
$$M\mapsto B(\mathfrak{G},C_{1+(1)}(B(C,C,R),?),M)$$
is the desired inverse functor on the level of derived categories.
\end{proof}

We also have a functor in the category of pairs of spectra
$$C_{1+}^{\otimes}(X_1,X_2)=\bigvee_{k,\ell\geq 0} \mathscr{C}_{1+}^{\otimes}(k,\ell)
\wedge_{\Sigma_k\times\Sigma_\ell} X_{1}^{k}\wedge X_{2}^{\ell}$$
where $\mathscr{C}_{1}^{\otimes}(k,\ell)$ is the fiber of $\mathscr{C}_1(k+\ell)$ over the subgroup 
$\Sigma_{k} \times \Sigma_{\ell}$ of 
$\Sigma_{k+\ell}$ preserving the blocks $\{1,\dots,k\}$, $\{k+1,\dots,k+\ell\}$. Then
$C_{1+}^{\otimes}$ is a (left $C_{1+}$, right $(C_{1+},C_{1+})$)-functor. For $C_{1+}$-algebras 
$R_1, R_2$, 
$$B(C_{1+}^{\otimes},(C_{1+},C_{1+}),(R_1,R_2))$$
is a model for the external smash product of $C_{1+}$-algebras. We will denote it by $R_1\wedge_1 R_2$. 
(The underlying operation on spectra is, indeed, ``$\wedge$".)

For the external smash product of modules, define
$$\underline{C}_{1+(2)}^{L}=(C_{1+}^{\otimes}, C_{1+(2)}^{L}).$$
This is a (left $\underline{C}_{1+(1)}^{L}$, right $\underline{C}_{1+(1,1)}^{L}$)-functor, so for left modules 
$M_1$ over $R_1$ and $M_2$ over $R_2$, 
$$B(C_{1+(2)}^{L},\underline{C}_{1+(1,1)}^{L},(R_1,R_2,M_1,M_2))$$
is the exterior smash product $M_1\wedge_1 M_2$, as a left $R_1\wedge_1 R_2$-module.

Analogously, we can define external smash product of $\mathscr{C}_{\infty+}$-algebras and their modules. 
For spectra $X_1, X_2, Y_1, Y_2$, define
$$C_{\infty+}^{\otimes}(X_1,X_2)=\bigvee_{k,\ell\geq 0}\mathscr{C}_{\infty+}(k+\ell)
\wedge_{\Sigma_k\times\Sigma_\ell} X_{1}^{k}\wedge X_{2}^{\ell} ,$$
$$C_{\infty+(2)}^{\otimes}(X_1,X_2,Y_1,Y_2)=
\bigvee_{k,\ell\geq 0} \mathscr{C}_{\infty+}(k+\ell+2)\wedge_{\Sigma_k\times \Sigma_\ell}
X_{1}^{\wedge k}\wedge X_{2}^{\wedge \ell}\wedge Y_1\wedge Y_2 .$$
Then the external smash product of $\mathscr{C}_{\infty+}$-algebras $R_1$ and $R_2$ is 
$$B(C_{\infty+}^{\otimes},(C_{\infty+},C_{\infty+}), (R_1,R_2))$$
which we will, for the moment, denote by $R_1 \wedge_\infty R_2$. Then there is a morphism of 
$\mathscr{C}_{1+}$-algebras
$$R_1\wedge_1 R_2\r R_1\wedge_\infty R_2$$
which is an equivalence (we omit $q^*$ on the left hand side). Similarly, for modules, define
$$\begin{array}{l}\underline{C}_{\infty+(1,1)}(R_1,R_2,M_1,M_2)
=\\(C_{\infty+}R_1, C_{\infty+}R_2,
C_{\infty+(1)}(R_1,M_1),C_{\infty+(1)}(R_2,M_2))
\end{array}$$
and
$$M_1\wedge_\infty M_2=
B(C_{\infty+(2)}^{\otimes}, \underline{C}_{\infty+(1,1)}, (R_1,R_2,M_1,M_2))$$
is a smash product functor from $(\mathscr{C}_{\infty+}, R_i)$-modules $M_i$, $i=1,2$, to
$\mathscr{C}_{\infty+}$-$R_1\wedge_\infty R_2$-modules. Again, 
for $\mathscr{C}_{\infty+}$-algebras $R_i$, there is a canonical map of $\mathscr{C}_{1+}$-modules
from the $\mathscr{C}_{1+}$-smash product of left $R_i$-modules to the smash product
of $\mathscr{C}_{\infty+}$-modules, which is an equivalence.

\vspace{3mm}
\noindent
{\bf Proof of Theorem \ref{tchar}:}
Denoting by $L$ the monad associated with the operad
$\mathscr{L}$, analogously to the comparison of Lie algebra representations and left (alternately, right)
modules over the universal enveloping algebra, we obtain a natural map
$$Lg\r C_{1+}^{\otimes}(g,g). $$
From this, we get a map
$$B(L,L,g)\r B(C_{1+}^{\otimes}(g,g), L,g),$$
which can be interpreted as a map of $S$-Lie algebras
\beg{echari}{g\r Ug\wedge_1 Ug.
}
Using the adjunction, this gives a map of $\mathscr{C}_{1+}$-algebras (a ``non-rigid Hopf algebra structure")
\beg{echar*}{Ug
\r Ug\wedge_1 Ug.}
Also for modules, we get maps
$$L_{(1)}\r C_{1+(1)}^{L},$$
$$L_{(2)}\r C_{1+(2)}^{L}$$
and comparing the $2$-sided bar constructions of monads, one shows that the pullback of
a $\wedge_1$-product of left $Ug$-modules $M_1,M_2$ via \rref{echar*} is equivalent to 
$M_1\otimes_{(L,g)}M_2$.

Now when $g$ is abelian, \rref{echari} expands to
\beg{echar**}{\diagram
g\rto & Ug\wedge_1 Ug\rto^\sim & Cg\wedge Cg\rto^\sim &C(g\vee g).
\enddiagram
}
(The last equivalence can again be seen directly using our methods, or alternately
it follows from commutation of left adjoints with coproducts.) By inspection, the composition
\rref{echar**} is homotopic, as a map of spectra, to the composition
$$\diagram
g\rto^(.4){Id\times Id} & g \times g & g\vee g\lto_\sim \rto^(.4)\eta & C(g\vee g).
\enddiagram$$
Now \rref{echar**}, by adjunction, gives
$$Cg\r C(g\vee g).$$
The composition 
$$\diagram
Ug\rto^\sim & Cg\rto & C(g\vee g),
\enddiagram
$$
in the derived category of $\mathscr{C}_{1+}$-algebras, is homotopic to \rref{echar*} by uniqueness 
of adjoints ($U$ is a Quillen left adjoint). We have proved that the diagram
$$\diagram
Ug \rto \dto & Ug\wedge_1 Ug\dto\\
Cg\rto & Cg\wedge_\infty Cg
\enddiagram$$
is commutative up to homotopy in $\mathscr{C}_{1+}$-algebras. Composing with a smash product
of $\mathscr{C}_{\infty+}$-morphisms
$$Cg\r R_i,\; i=1,2,$$
which are classified by morphisms of spectra
$$g\r R_i$$
completes the proof of the Theorem.
\qed

\vspace{3mm}
\begin{corollary}
\label{cchar}
Let $g$ be an abelian $S$-Lie algebra and let $\lambda:g\r S$ be a character. Then 
$$S^\lambda \wedge S^{-\lambda}\sim S^0$$
in the derived category of $g$-representations. In particular, $S^\lambda$ is invertible and hence
strongly dualizable in this category with respect to this symmetric monoidal structure.
\end{corollary}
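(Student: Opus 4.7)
The plan is to apply Theorem~\ref{tchar} directly with $R_1 = R_2 = S$, $\phi_1 = \lambda$, and $\phi_2 = -\lambda$. Under the canonical equivalence $S \wedge S \simeq S$ of $E_\infty$-algebras, the map
$$\psi = \phi_1 \wedge 1 + 1 \wedge \phi_2 : g \longrightarrow S \wedge S$$
corresponds to $\lambda + (-\lambda)$ in the additive derived category of spectra, which is null-homotopic. Theorem~\ref{tchar} therefore yields
$$S^{\lambda} \otimes_{(\mathscr{L},g)} S^{-\lambda} \;\sim\; (S \wedge S)^{\psi} \;\sim\; S^{0}$$
in the derived category of $g$-representations, where the last equivalence uses that the character assigned by Proposition~\ref{prdl2} to the zero map $g \to S$ is the trivial representation, i.e.\ the unit for the operadic tensor product noted after Lemma~\ref{lprod1}.

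The small point that needs verification is that the construction $\lambda \mapsto S^{\lambda}$ of Proposition~\ref{prdl2} is homotopy invariant, so that $\psi \simeq 0$ in spectra forces $(S \wedge S)^{\psi}$ to be equivalent to $S^{0}$ as a $g$-representation and not merely as a spectrum. This follows from naturality: $S^{\lambda}$ is obtained by pulling back an $\mathscr{C}_{\infty +}$-module structure along the adjoint of $\lambda$, and the relevant adjunctions and the projection formula of Proposition~\ref{pproj} are homotopical, so homotopic spectrum maps $g \to R$ produce equivalent $R$-characters.

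For the final assertion, recall that $S^{0}$ is the unit of the symmetric monoidal structure $\otimes_{(\mathscr{L},g)} = \wedge$ on the derived category of $g$-representations. The equivalence above exhibits $S^{-\lambda}$ as an inverse of $S^{\lambda}$, so $S^{\lambda}$ is invertible. In any symmetric monoidal category, an invertible object is automatically strongly dualizable: its monoidal inverse serves as its dual, with evaluation and coevaluation given by the equivalence $S^{\lambda} \wedge S^{-\lambda} \sim S^{0}$ and (after applying the symmetry) its inverse. The main obstacle is the minor check of homotopy invariance of the character construction outlined above; everything else is formal once Theorem~\ref{tchar} is in hand.
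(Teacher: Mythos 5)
Your proof is correct and follows exactly the route the paper intends: the corollary is stated with no written proof (just \qed) as an immediate consequence of Theorem~\ref{tchar} applied with $\phi_1=\lambda$, $\phi_2=-\lambda$, so that $\psi\simeq 0$ gives the trivial character, which is the unit of $\otimes_{(\mathscr{L},g)}$. Your additional remarks on homotopy invariance of $\lambda\mapsto S^\lambda$ and on invertibility implying strong dualizability are sound and fill in the details the paper leaves implicit.
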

\qed

\vspace{3mm}
\section{The $S$-Lie algebra $gl_nS$ and its representations. The restricted category.}
\label{sgln}

In this section, we specialize to the general linear Lie algebra $gl_n$ over $S$, its subalgebras, and their representations, 
especially Verma modules. Although it is possible to obtain $gl_n$ over $S$ as an endomorphism spectrum in the 
category of $S$-modules, we need to obtain a ``matrix-theoretic" definition of $gl_n$, in order to carry out
several 
basic constructions, such as the weight-lattice grading on $gl_n$ (see Section 4.2 below). To this end, 
we make use of Elmendorf and Mandel's multiplicative infinite loop space machinery of multicategories~\cite{em}.

For standard basic references to Verma modules and other constructions from the representation theory of 
Lie algebras, see, e, g. \cite{knapp}. 

\subsection{Verma modules}
\label{ssverma}
We begin by constructing $gl_n$ as a ``matrix algebra" over $S$.
To construct $\mathscr{C}_{1+}$-algebras in $\mathscr{S}$, it suffices by \cite{em} to construct
multifunctors from the ``associative" operad $\Sigma$ to the multi-category $Perm$ of permutative 
categories. By the results of \cite{hkk}, it suffices to construct a {\em weak} multifunctor, i.e. up to coherence 
isomorphisms with appropriate coherence diagram (see also \cite{kl}). (Note: Technically, the target of
the Elmendorf-Mandel functor is symmetric spectra, but a version of it also exists which lands
in the category used in the present paper -  see \cite{scompare}.)

Let $fSet$ denotes the category of finite sets
and bijections.
As a multifunctor from $\Sigma$ to $Perm$, the $\mathscr{C}_{1+}$-algebra $gl_nfSet$ 
sends unique object of $\Sigma$ to the product of $n\times n$ copies of the category $fSet$. This 
target object is thought of 
as the permutative category of $n \times n$-matrices, where each entry is a finite set. On morphisms, 
$$gl_nfSet\times\dots\times gl_nfSet\r gl_nfSet$$
is given by ``multiplication of matrices".
 On the entries of the matrices, we use Cartesian product of sets for 
 ``multiplication", and disjoint union of sets for ``addition".

In fact, this construction can be generalized: Let 
$$T\subseteq\{1,\dots,n\}\times \{1,\dots,n\}$$
be a set of pairs which is transitive as a binary relation. Then 
$$g_TfSet=\prod_{(i,j)\in T} fSet$$
is a weak multifunctor from the associative operad $\Sigma$ to the multicategory $Perm$, where the 
target object is the category of $n \times n$-matrices of finite sets, where the entries for all positions $(i, j) \notin T$
are empty sets. 
Thus, applying
the Elmendorf-Mandell functor $\mathscr{K}$ \cite{em,hkk,kl}, we obtain a $\mathscr{C}_{1+}$-algebra, hence an
$\mathscr{L}$-algebra, $g_TS$. 

There are a number of examples of $\mathscr{L}$-algebras $g_TS$
which will be useful to us, for example the Borel subalgebra $b_+S$ and its opposite $b_-S$ of upper (resp. lower) triangular
matrices, and the corresponding nilpotent ``Lie subalgebras" $n_+S$ and $n_-S$, 
and also the Cartan algebra of diagonal matrices $h_nS$. We also recall that a parabolic subalgebra of $gl_n$ is 
a subalgebra containing the Borel subalgebra. Hence, a parabolic subalgebra $p$ 
of $gl_n$ is a subalgebra consisting of 
block upper (or lower) triangular matrices corresponding to a given partition $i_1 + \cdots + i_k = n$ of $n$.
For a parabolic subalgebra $p$ of $gl_n$, its Levi factor is the factor algebra consisting of its diagonal blocks. 
Hence, we can also obtain parabolic subalgebras $pS$ and their Levi factors in this manner. 

\vspace{3mm}
Now we area in position to define Verma modules 
over $S$. Consider the $\mathscr{C}_{1+}$-subalgebra $b_+fSet$ which consists of upper triangular 
$n\times n$ matrices in $fSet$ (embedded into $gl_nfSet$ by sending the remaining entries into
the empty set). In turn, $b_+fSet$ projects onto the algebra $h_nfSet$ of
{\em diagonal} $n\times n$ matrices in $fSet$ (forgetting the above-diagonal terms). Applying the Elmendorf-Mandell
realization functor $\mathscr{K}$, we obtain a diagram of $\mathscr{C}_{1+}$-algebras:
\beg{evermad}{\diagram
b_+S\rto^\alpha \dto_\pi  & gl_nS\\h_nS &
\enddiagram
}
We further note that the Borel $\mathscr{C}_{1+}$-algebra structure on $h_nS$ is the pullback of a
$\mathscr{C}_{\infty+}$-structure, as the $\Sigma$-structure on $h_nfSet$ comes from the \v{C}ech
resolution $E\Sigma$ by the results of Elmendorf-Mandell \cite{em} (note that $h_nfSet$ is the product
of $n$ copies of $fSet$ where the operations are done component-wise). Indeed, as a spectrum, 
$h_nS$ is just the wedge sum of $n$ copies of $S$. 

Now pull the diagram \rref{evermad} back to Lie algebras via $u^*$. By what we just observed, $h_nS$ is an abelian
Lie algebra, and hence by Proposition \ref{prdl2} above, an $n$-tuple of integers $\lambda=(k_1,\dots,k_n)$ (specifying
homotopy classes of maps $S\r S$) specifies a map $\lambda: h_n \rightarrow S$, 
and hence a representation $S^\lambda$ of $h_nS$ on $S$. Let 
$\widetilde{\pi^*S^\lambda}$ be a cofibrant replacement of $\pi^*S^\lambda$. Put
$$V_\lambda=\alpha_\sharp\widetilde{\pi^*S^\lambda}.$$
(Recall here that $\alpha_\sharp$ is the left adjoint to $\alpha^*$ on the level of derived categories.)
This is the {\em Verma module} over $gl_nS$ induced from the character $\lambda$. (Whenever considering this
on the nose, we will automatically assume cofibrant replacement has been performed, without
indicating it in the notation.) 

To see that this is an analogue of the Verma module from classical Lie theory, 
we recall that for classical (complex) Lie algebras, for a given $\lambda: h_n \rightarrow \mathbb{C}$, 
$\mathbb{C}_{\lambda}$ is the 1-dimensional representation of the Borel algebra $b_+$, where $h_n$ 
acts by $\lambda$, and the nilpotent subalgebra $n_+$ acts trivially. Then the Verma module is defined to be 
$$ V_{\lambda} = U(gl_n) \otimes_{U(b_{+})} {\mathbb C}_{\lambda}  $$
where $U$ is the universal enveloping algebra. 
In our context, the analogue of $\mathbb{C}_{\lambda}$, as a $b_{+}S$-representation, is $\pi^{\ast}S^{\lambda}$, 
and the pushforward $\alpha_{\sharp}$ performs the role of the $U(gl_n) \otimes_{U(b_{+})}?$. 

In accordance with conventions of
representation theory \cite{sussan}, however, when using numerical subscripts, we perform a {\em $\rho$-shift}:
We put
$$(\overline{k}_1,\dots,\overline{k}_n)=(k_1,\dots,k_n)+\rho,$$
where 
$$\rho=(\frac{n-1}{2},\frac{n-3}{2},\dots,\frac{1-n}{2})$$
is half the sum of all positive roots. When indexing by numbers, we then write
$$V_{(\overline{k}_1,\dots,\overline{k}_n)}=V_\lambda.$$
It is necessary to point out, however, that for $n$ even, $\rho$ is not an integral weight. Therefore, stritly 
speaking, Verma modules $V_{(\overline{k}_1,\dots,\overline{k}_n)}$ for $n$ even will exist only up 
to shifting the weight by $(a,\dots,a)$ where $a\in (1/2)+\Z$. 

\vspace{3mm}
\subsection{Variants of the construction. The graded category and the $p$-complete category.}\label{sres}
We shall also refer to the derived category of $gl_nS$-representations as defined so far as the {\em unrestricted}
category of representations. While we will need to use this category, and will prove some results about it,
at present a complete calculation of even a single non-zero $Ext$-group in this category is out of reach. Typically,
such a group is a homotopy group of the dual $DX$ of an infinite spectrum $X$, where we, perhaps, have some
control over the homology of $X$.

Because of that, we shall also consider some variants of the derived category of representations which are
more treatable. One tool we shall often use is {\em completion at $p$} where $p$ is
a prime number. By this, we mean Bousfield localization of 
$\mathscr{S}$ at the Moore spectrum $M\Z/p$. This localizations was originally defined by \cite{bousfield}, and 
constructed in \cite{ekmm}, Chapter VIII, for $\mathscr{S}$ and algebras in this category. 
As shown in \cite{ekmm}, Section VIII.2, localization of this type commutes with the 
forgetful functor from algebras over a cofibrant operad in $\mathscr{S}$ to $\mathscr{S}$, and hence all of our 
constructions, at least on the derived level, can be readily carried over to the $p$-complete category $\mathscr{S}_p$.
We will typically specialize our calculations to the $p$-complete category at $p>>k$, which means that $p$ is larger than
some constant multiple of $k$.

\vspace{3mm}
The other important variant of the category of $gl_nS$-representations is obtained as follows. The entire
construction of the $\mathscr{C}_{1+}$-algebra (and therefore $\mathscr{L}$-algebra) $gl_nS$, and the Verma modules, 
can be made $\Lambda$-graded, where $\Lambda$ is the weight lattice of $gl_n$. 
Recall that in classical Lie theory, $gl_n$ is graded by the roots. Here, 
the weight lattice is
\beg{eonehalf}{\Lambda = \Z^n.}
In this grading, diagonal matrices
have degree $0$, and the matrix $e_{i,j}$ (i. e. with the only nonzero entry being 1 in the $(i, j)$-th position for $i \neq j$) 
has degree 
$$(0,\dots,0,1,0,\dots,0,-1,0,\dots,0)$$ 
where the $1$ is in the $i$'th and
$-1$ is in the $j$'th position.
Matrix multiplication is additive with respect to this grading, giving the grading for the entire $gl_n$. 

In our context, this grading can be constructed for the input of the Elmendorf-Mandell infinite loop space 
machine, by which we constructed our $S$-Lie algebras. More specifically, we define the $\Z^n$-grading on the 
category of $n \times n$-dimensional matrices of finite sets, just the same 
as for ordinary matrices. 
Therefore, we can apply a $\Z^n$-graded version of the Elmendorf-Mandell functor (agreeing, in fact,
to sum only homogeneous terms in the same degree), creating a $\Z^n$-graded version of a $\mathscr{C}_{1+}$-algebra.
Pulling back along the map of operads $u: \mathscr{L} \rightarrow \mathscr{C}_{1+}$ from 
Proposition~\ref{psslie1} gives the $\Z^n$-graded 
$\mathscr{L}$-algebra. Obviously, all of the definitions (such as representations) and results 
established so far can be carried to the
$\Lambda$-graded context.  

\vspace{3mm}
\subsection{Function objects}\label{ssfun}
Function objects are generally obtained as right adjoints to versions of the smash product. Several different
flavors of such functors arise in our context. For an $S$-Lie algebra $g$ and a $g$-representation $V$,
we denote by $\mathscr{F}_g(V,?)$ the right adjoint to $V\wedge ?$, where $\wedge$ is the internal smash product in 
the category of $g$-representations. 
We will be typically interested in the case when $V$ is cofibrant, in which case this functor coincides with its
right derived functor. On the other hand, if $X$ is a spectrum, and $V$ is a $g$-representation, then the
ordinary smash product $X\wedge V$ (in the category of 
spectra) clearly has a canonical structure of a $g$-representation, where the $g$-action 
$$ \mathscr{L}(n) \wedge g^{\wedge n-1} \wedge X \wedge V \rightarrow X \wedge V $$
arises from the $(\mathscr{L}, g)$-module structure on $V$. 
We denote
the right adjoint to this functor by $F_g(V,?)$ from $g$-representations to spectra. Again, in the case
of $V$ cofibrant,
this is the same as the corresponding right derived functor. 

Two comments are in order. First, note that the functor
$F_g$ can be generalized to modules over general operad algebras, and also to left (resp. right) modules over a
$\mathscr{C}_{1+}$-algebra $A$. In this last case, we denote the function object by $F_{A}$ and observe that
the right derived funtors of $F_g$ and $F_{Ug}$ coincide. The other comment is that the functors $F_g$ and 
$\mathscr{F}_g$ are in fact related. The point is that there is a trivial representation functor from spectra
to $g$-representations, which has a right adjoint, which we may think of as ``$g$-invariants''. On the level of 
right derived functors, then, $F_g$ is the $g$-invariants of $\mathscr{F}_g$.

As discussed in the previous section, for all the Lie algebras we constructed and the categories of their representations, 
there are $\Lambda = \Z^n$ graded versions. We will denote the graded versions of these
functors by the superscript $0$, i.e. we write $\mathscr{F}^0_g$, $F^0_g$, etc. 

 \vspace{3mm}
 \vspace{3mm}
\subsection{Morphisms of Verma modules. Blocks}
The main results of this subsection are Thereoms \ref{tchb} and \ref{tblocks}.

We begin with morphisms of characters. From now on, 
we will omit the $S$ in the notation for $S$-Lie algebras.

\begin{theorem}
\label{tchb}
Let $\lambda,\mu:h_n \r S$ be morphisms in $\mathscr{S}$ (determining characters by
Proposition \ref{prdl2}). Then
\beg{etchb}{F^{0}_{h_n}(S^{\lambda},S^\mu)=\begin{array}{ll}
0 & \text{if $\lambda\neq \mu$}\\
DC_{\infty+}(\Sigma h_n) & \text{if $\lambda=\mu$}\end{array}
}
where $\Sigma$ denotes the suspension of a spectrum
and the characters are considered to be concentrated in the degree given by their weight.
\end{theorem}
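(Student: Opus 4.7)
The plan is to reduce the computation to the case $\lambda = \mu = 0$ using the invertibility of characters, and then identify the resulting self-function spectrum with the Koszul dual of $h_n$ via a computation over the free $E_\infty$-algebra $C_{\infty+}(h_n)$.

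First, I would invoke Corollary \ref{cchar}: because $S^{\lambda}$ is invertible in the derived category of $h_n$-representations with inverse $S^{-\lambda}$, smashing with $S^{-\lambda}$ is an autoequivalence of that category. Combining this with the identification of products of characters from Theorem \ref{tchar}, one obtains a natural equivalence
$$F^0_{h_n}(S^\lambda, S^\mu) \simeq F^0_{h_n}(S^0, S^{\mu-\lambda}),$$
so that it suffices to understand $F^0_{h_n}(S^0, S^\nu)$ for $\nu \in \Lambda$.

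For the ``off-diagonal'' case $\nu \neq 0$, the answer is essentially tautological in the graded setting: the graded function spectrum $F^0_{h_n}$ records weight-preserving data, and since $S^0$ is concentrated in weight $0$ while $S^\nu$ is concentrated in weight $\nu$, the spectrum vanishes whenever $\nu \neq 0$. For the ``diagonal'' case $\nu = 0$, I would pass from the category of $h_n$-representations to left modules over the universal enveloping algebra $Uh_n$ via Proposition \ref{pdrl1}. Because $h_n$ is abelian, the projection formula of Proposition \ref{pproj} supplies an equivalence of $\mathscr{C}_{1+}$-algebras $Uh_n \simeq C_{\infty+}(h_n)$, and Lemma \ref{lchar1} lets me further identify the function object with the corresponding one computed in $(\mathscr{C}_{\infty+}, C_{\infty+}(h_n))$-modules, yielding
$$F^0_{h_n}(S^0, S^0) \simeq F^0_{C_{\infty+}(h_n)}(S, S).$$

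The principal obstacle is this last identification with $DC_{\infty+}(\Sigma h_n)$. My strategy is to model the right-hand side as the Spanier--Whitehead dual of the (reduced) bar construction $B(S, C_{\infty+}(h_n), S)$, and then to establish the spectral Koszul identity
$$B(S, C_{\infty+}(h_n), S) \simeq C_{\infty+}(\Sigma h_n).$$
This is the $S$-analog of the classical Koszul resolution statement $\mathrm{Tor}^{\mathrm{Sym}(V)}(k,k) \cong \Lambda^*(V)$, and it reflects precisely the Koszul duality (up to shift) between $\mathscr{L}$ and $\mathscr{C}_{\infty+}$ that underlies the construction of $\mathscr{L}$ in Section \ref{sslie}. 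Since $h_n$ is a finite wedge of copies of $S$, the identification can be verified by a direct cellular comparison, in the spirit of the arguments of Propositions \ref{psslie2} and \ref{pproj}; taking Spanier--Whitehead duals then produces the asserted value $DC_{\infty+}(\Sigma h_n)$.
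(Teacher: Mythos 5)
Your proposal is correct and follows essentially the same route as the paper: reduce to $\lambda=\mu=0$ via invertibility of characters (Corollary \ref{cchar}), kill the off-diagonal case by the weight-concentration argument in the graded category, and identify $F^0_{h_n}(S,S)$ with the Spanier--Whitehead dual of $S\wedge_{C_{\infty+}h_n}S\simeq B(S,C_{\infty+}h_n,S)\simeq C_{\infty+}(\Sigma h_n)$. The paper compresses the passage through $Uh_n$ and the bar construction that you spell out via Propositions \ref{pdrl1}, \ref{pproj} and Lemma \ref{lchar1}, but the content is the same, and both treatments leave the final Koszul identification $S\wedge_{C_{\infty+}h_n}S\simeq C_{\infty+}(\Sigma h_n)$ at the level of a direct verification.
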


\begin{proof}
Since $h_n$ is in degree $0$, $\lambda\neq \mu$ implies
$$F^0(Uh_n^{\wedge k}\wedge S^\lambda,S^\mu)=0$$
where $F^0$ denotes the function object (a spectrum) in the category of graded spectra. 
Consequently,
$$F^{0}_{Uh_n}(B(Uh_n, Uh_n,S^\lambda),S^\mu)=0.$$
Consider therefore the case $\lambda=\mu$. By the graded version of Corollary \ref{cchar}, it suffices
to consider the case $\lambda=\mu=0$. In this case, we can use the adjunction
$$F^{0}_{h_n}(S,S)=F_{C_{\infty+}h_n}(S,S)=
F_{C_{\infty+}h_n}(S,F(S,S))=F(S\wedge_{C_{\infty+}h_n}S,S).
$$
Now 
$$S\wedge_{C_{\infty+}h_n}S\sim C_{\infty+}\Sigma S,$$
as claimed.
\end{proof}

\noindent
{\bf Comment:}  The dual $DC_{\infty+}(\Sigma h_n)=F(C_{\infty+}\Sigma h_n,S)$ can be calculated. It is 
the product of spectra of the form
\beg{ecarls1}{D E\Sigma_{k+}\wedge_{\Sigma_k}S^{k\alpha}
}
where $\alpha$ is the sign representation of $\Sigma_k$. (Recall that $EG$ for a finite group $G$ is a
free $G$-CW complex which is non-equivariantly contractible.) Now using Spanier-Whitehead duality, \rref{ecarls1}
can be rewritten as
\beg{ecarls2}{F(E\Sigma_{k+}, S^{-k\alpha})^{\Sigma_k},
}
which, by Carlsson's theorem \cite{carlsson}, is the completion at the augmentation ideal of the Burnside ring of
the fixed point spectrum
\beg{ecarls3}{(S^{-k\alpha})^{\Sigma_k}.}
Now we have a cofibration sequence
\beg{ecarls4}{(\Sigma_k/A_k)_+\r S^0\r S^\alpha}
where $A_k$ is the alternating group, and its Spanier-Whitehead dual
\beg{ecarls5}{S^{-\alpha}\r S^0\r (\Sigma_k/A_k)_+.}
This means that $S^{-k\alpha}$ is the iterated fiber of the cube
\beg{ecarls6}{
\bigwedge_k(S^0\r (\Sigma_k/A_k)_+).
}
Since taking fixed points preserves iterated homotopy fibers, 
$(S^{-k\alpha})^{\Sigma_k}$ is obtained by applying $\Sigma_k$-fixed points to the corners of the cube 
\rref{ecarls6}, which are $S^0$, or wedges of copies of $(\Sigma_k/A_k)_+$, and their fixed point spectra
are
$$\bigvee_{G\subset \Sigma_k} BG_+$$
and wedges of copies of
$$\bigvee_{G\subset A_k} BG_+.$$

\vspace{3mm}

\begin{theorem}
\label{tblocks}
Suppose $a_i$, $b_i$, $i=1,\dots, n$ are two sequences of nonnegative integers.
Then, in the $p$-complete graded category,
$$F^{0}_{gl_n}(V_{(a_1,\dots,a_n)}, V_{(b_1,\dots,b_n)})\sim *$$
unless
\beg{eblock1}{(a_1,\dots, a_n)\geq (b_1,\dots,b_n)
}
(where $\geq$ denotes the ordering of weights, which is just the component-wise ordering)
and
\beg{eblock2}{\parbox{3.5in}{There exists a permutation $\sigma$ on $\{1,\dots,n\}$ such that
$(a_1,\dots,a_n)\equiv (b_{\sigma(1)},\dots,b_{\sigma(n)})\mod p$.}
}
\end{theorem}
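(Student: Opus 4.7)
The plan is to use the defining adjunction for Verma modules to express $F^{0}_{gl_n}(V_a,V_b)$ in terms of a weight piece of derived $n_+$-invariants of $V_b$, then apply Theorem~\ref{tchb} and separate the required vanishing into a weight-of-$V_b$ condition (giving~(1)) and a central-character condition (giving~(2)).

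Setting $\lambda_a = a-\rho$, $\lambda_b = b-\rho$ and writing $V_a=\alpha_{\sharp}\pi^{*}S^{\lambda_a}$, the $(\alpha_\sharp,\alpha^{*})$ and $(\pi^{*},\pi^{!})$ adjunctions yield
\[
F^{0}_{gl_n}(V_a,V_b)\ \simeq\ F^{0}_{b_+}(\pi^{*}S^{\lambda_a},\alpha^{*}V_b)\ \simeq\ F^{0}_{h_n}(S^{\lambda_a},\pi^{!}\alpha^{*}V_b),
\]
where $\pi^{!}\alpha^{*}V_b$ is the derived $n_+$-invariants of $V_b$ as an $h_n$-module. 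By Theorem~\ref{tchb} the last object vanishes unless the weight-$\lambda_a$ summand of $\pi^{!}\alpha^{*}V_b$ is nonzero, so the task reduces to showing that~(1) and~(2) are both necessary for that summand to survive. For~(1), I would apply the spectral PBW splitting of Proposition~\ref{ppbw} to identify $V_b$, as a spectrum, with $U(n_-)\wedge S^{\lambda_b}$, graded by the weights of the generators of $n_-$ (which are the negative roots). Any weight $\nu$ of $V_b$ then has the form $\lambda_b + \sum_{i>j} n_{ij}(e_i - e_j)$ with $n_{ij}\ge 0$; coordinate-by-coordinate analysis shows that, for $\lambda_a = \nu$ with $a\geq b$ componentwise, the inequalities cascade from position $1$ through position $n$ and force all $n_{ij}=0$ rationally, while $p$-completion gives exactly the mod-$p$ relaxation that matches~(2).

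Condition~(2) comes from a central-character / Harish-Chandra-type argument. Classically, the center of $U(gl_n)$ is identified with $S_n$-symmetric polynomials on $h_n^{*}$ under the dot action, so a nonzero $\operatorname{Hom}(V_\lambda,V_\mu)$ forces $\lambda+\rho$ and $\mu+\rho$ to lie in a single $S_n$-orbit. In our setting, the relevant center is accessed as an algebra of natural endotransformations of $\mathscr{F}_{gl_n}(V_b,-)$; at a linearly large prime, comparison with the classical center is available block-by-block, and the separation of central characters upgrades to the statement that $F^{0}_{gl_n}(V_a,V_b)\simeq *$ unless $a$ and $b$ are related by a permutation modulo $p$. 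The main obstacle is precisely this central-character step: constructing enough rigidity of the center of $U(gl_n S)$ in the stable-homotopy setting to imitate the classical Harish-Chandra separation. The linearly-large-prime hypothesis is exactly what allows the required comparison with the rational picture and bypasses the modular representation-theoretic pathologies flagged in the introduction.
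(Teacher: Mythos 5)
Your opening reduction is the same as the paper's: change of rings along $\alpha_\sharp\dashv\alpha^*$ and then along $\pi$ to identify $F^0_{gl_n}(V_a,V_b)$ with the $\lambda_a$-weight piece of the derived $n_+$-invariants of $V_b$, with Theorem \ref{tchb} handling the $h_n$-direction. From that point on, however, your argument diverges and has a genuine gap. The paper obtains \emph{both} conditions, and in particular the mod-$p$ linkage condition \rref{eblock2}, from an explicit computation of the nilpotent cohomology: it first observes that positivity of the weights of $n_+$ makes the cobar object degreewise finite in each weight, so one may pass to $H\Z/p$-homology; it then argues that Dyer--Lashof operations can be ignored in the relevant weight range, reducing everything to a classical characteristic-$p$ computation of $H^*(n_+,V_\lambda)$, which is carried out by induction on $n$ via iterated Hochschild--Serre spectral sequences, bottoming out in the rank-one case where the kernel and cokernel of the $e_{1,2}$-action on $U(n_-)$ produce exactly the congruences of \rref{eblock2}. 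You replace this with a central-character / Harish--Chandra separation argument, but the paper constructs no center of $U(gl_nS)$ and no Harish--Chandra homomorphism over $S$; as you yourself note, that construction is ``the main obstacle,'' and it is not supplied. Moreover, even granting such a center, the classical argument over $\Q$ gives exact $W$-linkage of $\rho$-shifted weights, not the mod-$p$ statement; upgrading to \rref{eblock2} requires precisely the characteristic-$p$ information that the paper extracts by direct computation. So the key step of your proof is missing rather than merely different.

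Two smaller problems. First, your assertion that in the weight bookkeeping ``$p$-completion gives exactly the mod-$p$ relaxation that matches (2)'' cannot be right as stated: $p$-completion does not alter the $\Z^n$-grading, so the support of weights of $V_b$ is unchanged; the mod-$p$ phenomena arise from torsion in the cohomology (integers divisible by $p$ acting as zero on $H\Z/p$-homology), which is exactly what the paper's reduction to algebra over $\Z/p$ is designed to capture. Second, you never address the passage from the spectrum-level $F_{n_+}$ to an algebraic computation at all; the finiteness-in-each-weight observation and the discussion of Dyer--Lashof operations are what make the homological calculation legitimate, and omitting them leaves even your argument for \rref{eblock1} resting on a weight analysis of $V_b$ alone, whereas the relevant object is the full cobar complex.
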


\vspace{3mm}

\noindent
{\bf Comment:} 
We can show an analogous result with $F^0$ replaced by $F$ and condition \rref{eblock1} omitted, but
it is substantially more difficult, and there seems to be no benefit for our purposes in this paper.

\vspace{3mm}
\begin{proof}
First, by the definition of $V_{\lambda_1}$ and standard ``change of rings", we have
\beg{eblocks1}{
F^{0}_{gl_n}(V_{\lambda_1}, V_{\lambda_2})\sim F_{b_+}^{0}(S^{\lambda_1}, V_{\lambda_2}).
}
Now using further the fact that $S^{\lambda_1}$ is a pullback of an $h_n$-representation, 
the right hand side of \rref{eblocks1} can be further written as
\beg{eblocks2}{
F_{h_n}^{0}(S^{\lambda_1}, F_{b_+}^*(Uh_n, V_{\lambda_2})).
}
By the ``$*$'' superscript, we mean the graded object whose 
$\mu$-term is the $F^0$'s where the source is smashed with
$S^{-\mu}$. The functor $F^{\ast}_{b+}(Uh_n, -)$ is
the right adjoint to the forgetful functor from graded $b_+$-representations to graded $h_n$-representations.
Now again
\beg{eblocks3}{
 F_{b_+}^{*}(Uh_n, V_{\lambda_2})\sim F_{n_+}^*(S^{\lambda_1},V_{\lambda_2}).
}
This is the graded ``nilpotent stable homotopy" of $V_{\lambda_2}$. On the right hand side of \rref{eblocks3}, 
the significance of the ``$\lambda_1$"-decoration of $S$ is only for grading. 

Now all the weights of $n_+$ are positive. This means that in each given degree, the cosimplicial object calculating
the given term is, in fact, finite (in the sense that it has only finitely many non-trivial cosimplicial degrees, and
each degree is a finite spectrum).

This means that we can calculate with homology. Working in the $p$-complete category, specifically for
$H=H\Z/p$, we have
\beg{eblocks4}{
HF_{n_+}(S,V_{\lambda})\sim F_{Hn_+}(H,HV_{\lambda}).
}
Calculating the right hand side is pure ($E_\infty$-) algebra. It is, in fact, 
almost the same as doing the calculation in ordinary algebra in
characteristic $p$ \cite{Zforms}, with the exception that we have to include a discussion of higher
Dyer-Lashof operations. However, Dyer-Lashof operations in $\text{mod}\; p$
homology occur in weights which are multiples of $p$, and hence can be ignored for our purposes.

Let us calculate, then, in (classical) characteristic $p$. We will proceed by induction on $n$.
Consider the abelian graded Lie subalgebra of $n_+$ corresponding to the binary
relation
$$\{(1,j)\mid j=2,\dots,n\}.$$
Then we have a short exact sequence of Lie algebras
$$0\r a\r n_+\r n_{+}^{\prime}\r 0$$
where $n_{+}^{\prime}$ is the graded Lie algebra corresponding to the
transitive relation
$$\{(i,j)\mid 2\leq i<j\leq n\}.$$
Then we have a Hochschild-Serre spectral sequence
$$H^p(n_{+}^{\prime},H^q(a,V_{\lambda}))\Rightarrow
H^{p+q}(n_+, V_{\lambda}).$$
To consider the action of $a$ on $V_\lambda\sim Un_-$, we note that $e_{1,i}$ acts non-trivially
on $e_{i,1}$. Let $a_-$ be the graded Lie algebra corresponding to the transitive relation
$$\{(i,1)\mid 1<i\leq n\}.$$
Considering elements of $Un_-$ of the form
$$e_{2,1}^{k_2}\dots e_{n,1}^{k_n},$$
and considering the actions of $e_{2,1},\dots, e_{n,1}$ in order (i.e., using a sequence of
$n-1$ consecutive Hochschild-Serre spectral sequences), we see that
cocycles are only in the requisite weights by reducing to the $n=2$ case. The $n=2$ case is 
an easy direct calculation.

Thus, we have
\beg{eblocks+}{
H^*(a,V_\lambda)\cong H^*(a,Ua_-)\otimes Un_{-}^{\prime}
}
where $n_{-}^{\prime}$ is the opposite nilradical to $n_{+}^{\prime}$, consisting of
``below-diagonal elements". To compute the cohomology
of $n_{+}^{\prime}$ acting on \rref{eblocks+}, the weight shifting action of $n_{+}^{\prime}$ on
$H^*(a,Ua_-)$ can be neglected by a filtration spectral sequence. The statement then follows
from the induction hypothesis.
\end{proof}

\vspace{3mm}
Therefore, at least for Verma modules of regular weights
$$V_{(a_1,\dots,a_n)}, \; a_i\geq 0, \; i\neq j\Rightarrow a_i\neq a_j,$$
in the $p$-complete category with $p>> a_i$, there are no non-zero morphisms in the derived
category between Verma modules in different blocks of the BGG category $\mathcal{O}$ 
(i.e. $V_{(a_1,\dots,a_n)}$ and
$V_{(b_1,\dots,b_n)}$ where the sequence $(a_1,\dots,a_n)$ is not a permutation
of the sequence $(b_1,\dots,b_n)$ - see \cite{humphreys}).

\vspace{3mm}
\subsection{Some special finite $gl_k$-representations}
\label{sfin}

There is a natural (weak in the sense of \cite{kl}) 
action of $gl_kfSet$ on $(fSet)^k$. Rectifying and applying the $\mathscr{K}$-functor
of Elmendorf and Mandell \cite{em}, we obtain the ``standard representation" $W=W_k$ of $gl_kS$. 
This representation
is graded. It is useful to note that 
for any graded representation $U$, 
if $0\leq m<p$ and we are working in the $p$-complete category, we can construct the $m$'th {\em symmetric product}
\beg{efinsymm}{Sym^{m}U=(E\Sigma_{m+})\wedge_{\Sigma_m} U^{\wedge m}
}
and the {\em exterior product}
\beg{efinext}{\Lambda^mU=\Sigma^{-m}Sym^{m}(\Sigma U).
}
Note that the restriction $m<p$ is not necessary, but in this range the $\text{mod}\; p$ 
homology of $B\Sigma_m$ is $\Z/p$ in dimension $0$, and hence \rref{efinsymm}, \rref{efinext} are
finite (and of the expected dimension). 

In particular, for $k<p$, we have a $gl_kS$-representation
$$Det=\Lambda^k(W_k).$$
We will often use smash powers of the $Det$ representation.
As already remarked, the underlying graded spectrum of $Det_k=
Det$ is $S$ in the appropriate degree. ``Non-integral smash powers
of $Det$'' can also be constructed by the following trick: A representation on $S$ can
be considered as a morphism of $S$-Lie algebras of the 
given $S$-Lie algebra into the $\mathscr{C}_{1+}$-algebra $F(\widetilde{S},\widetilde{S})$. But the identity
inclusion
\beg{edetpower}{S\r F(\widetilde{S},\widetilde{S})}
is an equivalence, and hence $F(\widetilde{S},\widetilde{S})$ is, in the derived
category, canonically $q^*$ of the $E_\infty$-algebra $S$. Now for any $m$ prime to $p$, we can
compose \rref{edetpower} with
the character 
$$m:C_{\infty+}\widetilde{S}\r S$$
to create the ``$m$'th determinant power representation" which we will denote by 
$Det^{\wedge m}$. Placed into the appropriate degree, it is a special graded representation.

\vspace{3mm}
\subsection{Some useful pairs of adjoint functors}
\label{sadj}
Consider a transitive relations $T\subseteq T^\prime$ on $\{1,\dots,n\}$. Then we have an ``inclusion" morphism of graded Lie algebras
\beg{eadj1}{\kappa_{g_T,g_{T^\prime}}=\kappa_{T,T^\prime}:g_T\r g_{T^\prime}
}
Of course, we have the pullback functor $\kappa_{T,T^\prime}^{*}$ from graded $g_{T^\prime}$-representa-tions
to graded $g_T$-representations. This functor has a left adjoint $(\kappa_{T,T^\prime})_\sharp$
and a right adjoint $(\kappa_{T,T^\prime})_*$ (left and right Kan extension).
Certain compositions of these functors will be of major use to us. 
Specifically, we will often consider the case when 
\beg{eadj2}{g_T=gl_{k_1}\times\dots\times gl_{k_m}, \; g=g_{T^\prime}=gl_n
}
with $k_1+\dots+k_m=n$. Let $p_+$ resp. $p_-$ be the parabolic Lie subalgebra in $gl_n$ with reductive Levi
factor 
$g_T$ with respect to the positive (resp. negative) roots. 

\vspace{3mm}
\subsection{co-Verma modules}\label{sscoverma}
It turns out that to capture the analogue of derived Zuckermann functors, we will actually have to go even further
and investigate algebraic groups over $S$. On representations of algebraic groups, however, one naturally has
{\em induction}, which is an analogue of the functors $\kappa_*$. The functors $\kappa_\sharp$ do not in general have
analogues. Because of this, rather than Verma modules, it will be easier for us to work with generalized
{\em co-Verma modules}, which are modules of the form
$$(\kappa_{p_+,g})_*\pi^*W$$
where $\pi$ is the projection from the parabolic to its Levi factor, and $W$ is a representation of the Levi factor. 

Note: in classical representation theory, it is customary to swap $p_+$ for $p_-$ so that the Verma and co-Verma
modules are in the same parabolic BGG category
$\mathcal{O}_{\mathbf{p}}$. We do not bother with this convention here. A part of the reason is that
even in the graded sense, the graded pieces of our Verma modules are not truly finite spectra (since they
will be extended products), and therefore dualization is not as nicely behaved as one may hope. 
When we work in the category completed at a large prime $p$, however, in weights whose $\rho$-shifted 
coordinates are non-negative integers much smaller than $p$, graded morphisms behave well in the given range.

\vspace{3mm}
\section{Commutative Hopf algebras and Harish-Chandra pairs over $S$} \label{salgghc}

In this section, we will define the $S$-module version of Harish-Chandra pairs and their representations.
Currently, we have no construction of Lie groups in our theory. To remedy this, we use the notion of 
\emph{commutative Hopf algebras} instead, which can be thought as a weak version of algebraic groups. 

\vspace{3mm}
\subsection{Commutative Hopf algebras over $S$}\label{ssalgg}
A {\em 
commutative Hopf algebra $R$ over $S$} 
is a $\mathscr{C}_{1+}$-coalgebra in the category of commutative $S$-algebras, i.e. explicitly,
a choice of an operad $\mathscr{A}$ in $\mathscr{B}$ equivalent to $\mathscr{C}_{1+}$, and
structure maps in the category of commutative $S$-algebras
$$\epsilon:\mathscr{A}(0)\widetilde{\otimes} R\r S,$$
$$\psi:\mathscr{A}(n)\widetilde{\otimes}
 R\r \underbrace{R\wedge\dots\wedge R}_{\text{$n$ times}}$$
where $\widetilde{\otimes}$ is the based Kelly tensor product \cite{kelly} (which can be always
taken between a based simplicial set and an object of a symmetric monoidal category with simplicial
realization preserving the symmetric monoidal structure)
which satisfy the usual operad coalgebra relations. A {\em comodule algebra} 
(more generally, a $\mathscr{C}$-comodule algebra for a $\mathscr{B}$-operad
$\mathscr{C}$) over a commutative
$S$-Hopf algebra $R$ is a commutative $R$-algebra (resp. $\mathscr{C}$-algebra) 
$A$ together with structure morphisms 
\beg{ecomodule}{\theta:\mathscr{A}(n)^R\widetilde{\otimes} A\r A\wedge R\wedge\dots\wedge R,}
(where $\mathscr{A}(n)^R$ is as in Section \ref{ssdlr}),
compatible with $\psi$ and $\epsilon$ in the obvious sense. Obviously, $R$ is always
a comodule algebra over itself.
We will also be interested in
comodules in the category of spectra, which are spectra whose
structure is defined by the same formula \rref{ecomodule}, where $\widetilde{\otimes}
$ now denotes the Kelly product
in the category of spectra (which is, essentially, the smash product). For a fixed commutative
$S$-Hopf algebra $R$, denote by $R\dash Comod$ the category of $R$-comodules.

The smash product creates a commutative associative unital product in the category of 
$R$-comodules and also in the category of $R$-comodule algebras by
$$\diagram
\mathscr{A}(n)^R\widetilde{\otimes} V\wedge W\dto^{\Delta\wedge Id}\\
(\mathscr{A}(n)^R\wedge \mathscr{A}(n)^R)
\widetilde{\otimes} V\wedge W\dto^T\\
(\mathscr{A}(n)^R\widetilde{\otimes} V)\wedge (\mathscr{A}(n)^R\widetilde{\otimes} W)\dto^{
\theta\wedge\theta}\\
V\wedge R\wedge\dots\wedge R\wedge W\wedge R\wedge\dots\wedge R\dto^T\\
V\wedge W\wedge (R\wedge R)\wedge\dots\wedge (R\wedge R)\dto^{Id\wedge\phi\wedge\dots
\wedge \phi}\\
V\wedge W\wedge R\wedge\dots\wedge R
\enddiagram
$$
where $T$ denotes switches of factors and $\phi$ is the product in the category of commutative 
$S$-algebras. 
Using the Brown's representability theorem \cite{brown}, we find that in the
category of $R$-comodules, the functor $?\wedge V$ has
a right adjoint $\mathscr{F}_R(V,?)$. However, it is important to note that this right adjoint may not have the expected
properties, in particular the underlying spectrum may not be $F(V,?)$. This is because we do not have
conjugation as a part of our definition of commutative Hopf algebra (the reason of which, in turn, is that
we do not know how to construct examples with rigid conjugation - we will return to this point below). In the 
absence of conjugation, we do not expect the function object to behave as expected. To give a very simple
analogy, consider the category of complex representations of the commutative monoid $\N_0$ (think of it
multiplicatively, writing the generator as $t$). Then the unit of the tensor product is the ``trivial'' representation
$\C_1$ where $t$ acts by $1$. Now consider the representation $\C_0$ where $t$ acts by $0$. We actually
have $\mathscr{F}(\C_0,\C_1)=0$, because tensoring any representation with $\C_0$ makes $t$ act by $0$.

\vspace{3mm}
Note that $\mathscr{F}_R(V,V)$ is canonically a $\mathscr{C}_{1+}$-comodule algebra in the category
of $R$-comodules, while $F(V,V)$ is a $\mathscr{C}_{1+}$-algebra in $\mathscr{S}$. If
$\mathscr{U}$ is the forgetful functor from the category of $R$-comodules to $\mathscr{S}$,  there further is a 
canonical morphism of $\mathscr{C}_{1+}$-algebra
\beg{eforgett+}{\mathscr{U}\mathscr{F}_R(V,V)\r F(V,V),
}
but it is not an equivalence for general $R$.

\vspace{3mm}
By a {\em morphism of commutative $S$-Hopf algebras}, we shall mean
a morphism of $\mathscr{C}_{1+}$-coalgebras in the $\mathscr{C}_{\infty +}$-algebras. Similarly,
we define morphisms of comodules and comodule algebra by requiring compatibility with the
structure morphisms $\theta$. 

\vspace{3mm}
Our typical example of a commutative Hopf algebra is, for a transitive relation $T$ on $\{1,\dots,n\}$, a commutative
$S$-Hopf algebra
\beg{egtalg}{\mathscr{O}_{G_T}=det^{-1}C_\infty (g_T^\vee).
}
Here we use the notation $g^\vee=F(g,S)$, we suppress cofibrant replacement from the
notation, and $det^{-1}$ means
inverting the determinant $0$-homotopy class. Both cofibrant replacement and inverting a homotopy class
can be done in a way which does not spoil the $\mathscr{C}_{1+}$-coalgebra structure, using the methods
of \cite{ekmm}. This structure is
given by the diagram
$$\diagram
g_T^\vee\rto\dto_\eta & g_T^\vee\wedge g_T^\vee\dto^{\eta\wedge \eta}\\
C_\infty(g_T^\vee)\rdotted|>\tip &C_\infty(g_T^\vee)\wedge C_\infty(g_T^\vee),
\enddiagram$$
which follows from adjunction. 

We should mention, of course, that we have a canonical morphism $h\r (h^\vee)^\vee$ which, however, is not in
general an isomorphism in $\mathscr{S}$, especially when cofibrant objects are concerned. Similarly, while the
dual of an operad coalgebra (resp. comodule) is in general an algebra (resp. module), it is not true in general that
the dual of an algebra (resp. module) is a coalgebra (resp. comodule) over the same operad. This is one of the 
reasons we included the operad $\mathscr{A}$ in the definition: using standard rectification methods, we
can however change $\mathscr{A}$ (while preserving its homotopy type) in such a way that $g_T^\vee$ is
a coalgebra, and $g_T\r (g_T^\vee)^\vee$ is an equivalence of $\mathscr{A}$-coalgebras.

From the structure, 
$$V=\bigvee_{\text{$n$ copies}} S$$
is an $\mathscr{O}_{GL_nS}$-comodule, which we call the {\em standard representation}. Also, $S$ is 
canonically an $\mathscr{O}_{GL_nS}$-comodule, which we will call the {\em trivial representation}. As before, 
the smash product of comodules can be used to construct other comodules. We shall be especially interested
in the comodules
\beg{eoglext}{\Lambda^qV=\Sigma^{-q}E\Sigma_{q+}\wedge_{\Sigma_q}V^{\wedge q}.
}
In this paper, we will be interested in completing at a large prime $p$, by which we mean Bousfield localizing 
at the Moore spectrum $M\Z/p$ (\cite{ekmm}, Section VIII.1). We denote by $(\mathcal{O}_{GL_nS}\dash Comod)_p$
the full subcategory of $\mathcal{O}_{GL_nS}\dash Comod$ on $M\Z/p$-local
comodules, and by
\beg{emploc}{X\r X_p
}
the corresponding localization map of $(\mathcal{O}_{GL_nS}\dash Comod)_p$-comodules.
One sees easily that the underlying morphism of spectra of \rref{emploc} is also $M\Z/p$-localization.
Recall that the category of $M\Z/p$-local spectra has a smash product obtained by applying the smash product
and then $M\Z/p$-localizing. It is commutative, associative and unital up to homotopy, the unit being $S_p$.
We do not know how to rigidify this product, however. There is also a right adjoint, which is simply $F(?,?)$
restricted to $M\Z/p$-local spectra.
The category $(\mathcal{O}_{GL_nS}\dash Comod)_p$ has a smash product obtained by applying the ordinary
smash product and then $M\Z/p$-localizing. We will denote both this smash product and
the underlying smash product of $M\Z/p$-local spectra by $\wedge_p$.
The unit is the $M\Z/p$-localized trivial representation.

We do not know how to construct Bousfield localization
for commutative $S$-Hopf algebras. In fact, the reader should consult \cite{ekmm}, Chapter VIII, to see that even for
commutative algebras, Bousfield localization is not entirely what we expect. For example, one cannot 
$p$-complete the unit. 

This is the reason why we need results such as the following.

\vspace{3mm}
\begin{lemma}\label{lpcomplf}
Let $p>>n$ be a prime. Then $V_p$ is strongly dualizable
in the category of $M\Z/p$-local $\mathscr{O}_{GL_nS}$-comodules, and we have equivalences
of spectra
\beg{epcomplf1}{\diagram
\mathscr{U}\mathscr{F}_{\mathscr{O}_{GL_nS}}(V_p,S_p)\rto^(.6)\sim & F(V_p,S_p)
\enddiagram
}
\beg{epcomplf2}{\diagram
\mathscr{U}\mathscr{F}_{\mathscr{O}_{GL_nS}}(V_p,V_p)\rto^(.6)\sim & F(V_p,V_p).
\enddiagram
}
\end{lemma}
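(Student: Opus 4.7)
The plan is to exhibit an explicit strong dual $DV_p$ for $V_p$ in the category $(\mathscr{O}_{GL_nS}\dash Comod)_p$ by imitating, in the spectral setting, the classical $GL_n$-representation-theoretic identity $V^{\vee}\cong \Lambda^{n-1}V\otimes \det^{-1}$. The merit of this description is that it expresses the dual using only covariant functors of $V$ (exterior powers and determinant twist), and those are available at the comodule level without any rigidified antipode on $\mathscr{O}_{GL_nS}$.

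Concretely, I would set
$$DV_p \;:=\; (\Lambda^{n-1}V\wedge Det^{\wedge -1})_p.$$
The exterior power \rref{eoglext} is well-behaved because $n-1<p$, and the negative determinant twist exists as a comodule via the construction \rref{edetpower} applied to the character $m=-1$ (using $(n,p)=1$). Its underlying spectrum is a wedge of $n$ copies of $S_p$ placed in the appropriate graded degree, matching $F(V_p,S_p)\sim \bigvee_n S_p$. The evaluation $ev:V_p\wedge_p DV_p\to S_p$ is the $p$-localized exterior multiplication smashed with $Det^{\wedge -1}$:
$$V_p\wedge_p(\Lambda^{n-1}V\wedge Det^{\wedge -1})_p\to (\Lambda^n V\wedge Det^{\wedge -1})_p \sim S_p.$$
The coevaluation $coev:S_p\to DV_p\wedge_p V_p$ arises from the canonical Koszul comodule map $Det\to V\wedge \Lambda^{n-1}V$ (the spectral lift of $e_1\wedge\dots\wedge e_n\mapsto \sum_i (-1)^{i-1}e_i\otimes e_1\wedge\dots\hat{e}_i\dots\wedge e_n$), smashed with $Det^{\wedge -1}$ and composed with the inverse of $Det\wedge Det^{\wedge -1}\sim S$.

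The heart of the proof is to verify the two triangle identities in the derived category of $M\Z/p$-local $\mathscr{O}_{GL_nS}$-comodules. Following the strategy employed in the proof of Theorem \ref{tblocks}, I would reduce these identities to the mod-$p$ homology level, where they become the classical Cramer's rule identities for the standard representation of $GL_n$ over $\mathbb{F}_p$. The large prime hypothesis $p\gg n$ enters exactly here: it guarantees that the Dyer-Lashof operations contributing to the mod-$p$ homology of the $E_\infty$-algebras involved lie in weights outside the range we care about, so the comparison with classical algebra is faithful. Once the triangle identities are established, strong dualizability of $V_p$ follows.

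Strong dualizability immediately yields a natural equivalence $\mathscr{F}_{\mathscr{O}_{GL_nS}}(V_p,W)\sim DV_p\wedge_p W$ in the comodule category for every $M\Z/p$-local comodule $W$. Applying $\mathscr{U}$, which preserves the localized smash product, together with the underlying-spectrum identification $\mathscr{U}(DV_p)\sim F(V_p,S_p)$ (which holds because $V_p$ is a finite wedge of copies of $S_p$), gives $\mathscr{U}\mathscr{F}_{\mathscr{O}_{GL_nS}}(V_p,W)\sim F(V_p,\mathscr{U}W)$; specializing to $W=S_p$ and $W=V_p$ yields \rref{epcomplf1} and \rref{epcomplf2}. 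The main obstacle, as anticipated, is the construction of $coev$ and the verification of the triangle identities entirely inside the comodule category: lacking an antipode on $\mathscr{O}_{GL_nS}$, one cannot simply dualize a coaction, and one must instead check these identities by passing to mod-$p$ homology and invoking the large prime to dispose of the higher operadic obstructions.
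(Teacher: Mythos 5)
Your identification of the dual object coincides with the paper's: there too $DV_p$ is $(\Lambda^{n-1}V)_p\wedge_p((\Lambda^nV)_p)^{-1}$, the evaluation is exterior multiplication $V\wedge\Lambda^{n-1}V\r\Lambda^nV$, the coevaluation is a Koszul-type splitting, and strong duality is verified by comparing with the evident duality of the underlying $M\Z/p$-local spectra. But there is one genuine gap: you cannot obtain $Det^{\wedge -1}$ as a \emph{comodule} from the construction \rref{edetpower}. That construction (Subsection \ref{sfin}) produces negative determinant powers as representations of the $S$-Lie algebra $gl_kS$, via a morphism into $F(\widetilde{S},\widetilde{S})$; it says nothing about comodules over the commutative $S$-Hopf algebra $\mathscr{O}_{GL_nS}$, which is the category in which the lemma is stated. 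Inverting $(\Lambda^nV)_p$ inside the comodule category is precisely where the missing antipode bites (you note this problem for coactions in general but then sidestep it for your own $Det^{\wedge-1}$). The paper supplies the needed step as a separate argument: if an $M\Z/p$-local comodule $L$ forgets to $S_p$ and its coaction represents an invertible class in $R_p$, then $L$ is invertible in the derived category of comodules. This is proved by equipping the spectrum-level inverse $L^{-1}$ with a comodule structure by hand, using the obstruction-theoretic description of comodule structures (a structure map $M\r M\wedge R$ plus a tower of vanishing higher coherences, cf. \cite{lada}), the coherences for $L^{-1}$ being manufactured by smashing with those of $L$. Without this, or an equivalent construction, your $DV_p$ is not yet an object of $(\mathscr{O}_{GL_nS}\dash Comod)_p$.

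Two smaller points. First, your ``spectral lift of the Koszul map'' $Det\r V\wedge\Lambda^{n-1}V$ needs an actual construction; the paper obtains the coevaluation from the transfer $\Lambda^nV\r(V\wedge\Lambda^{n-1}V)_p$ associated to $\Sigma_{n-1}\subset\Sigma_n$. Second, the triangle identities do not require a mod-$p$ homology computation: the unit and counit are maps of comodules whose underlying maps of spectra are the unit and counit of the strong duality of the finite wedge $V_p$ in $M\Z/p$-local spectra, so the triangle composites are underlying equivalences, hence equivalences of comodules, which suffices for strong duality up to the standard rescaling. Your appeal to Dyer-Lashof operations in restricted weights is borrowed from the proof of Theorem \ref{tblocks} and is not the mechanism here; the role of $p>>n$ in this lemma is to make the extended powers $\Lambda^qV$ for $q\leq n$ finite of the expected rank and to make the transfer-based coevaluation behave as in characteristic zero.
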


\begin{proof}
We first prove that
\beg{edetgln}{(\Lambda^n V)_p
}
is invertible in the derived category of $(\mathcal{O}_{GL_nS}\dash Comod)_p$.
In fact, it is true in general that if an ($M\Z/p$-local) $R$-comodule $L$ forgets
to $S$ (resp. $S_p$) and specifies a (homotopy) 
invertible class in $R$ (resp. $R_p$), then $L$ is invertible in the derived category
of $R$-comodules.
To see this, let $L^{-1}$ be the inverse of $L$ in the category of ($M\Z/p$-local) spectra.
We recall from \cite{lada}, Chapter V that 
an $R$-comodule structure on $M$ can be specified by a ``structure map''
\beg{ecomodstr}{M\r M\wedge R
}
and the vanishing of a series of obstructions
\beg{ecomodobstr}{S^k\wedge M\r M\wedge \underbrace{R\wedge\dots\wedge R}_{\text{$k+1$ times}}.
}
In the case of $L^{-1}$, we specify \rref{ecomodstr} as the $R$-inverse homotopy class to the
homotopy class associated with the comodule structure \rref{ecomodstr} for $L$. The homotopies
making the obstructions
\rref{ecomodobstr} vanish are then computed from smashing (point-wise) with the corresponding homotopies
for $L$, and noting that $S$ (resp. $S_p$) is also a comodule using the unit of $R$.

We shall now prove \rref{epcomplf1}; \rref{epcomplf2} is proved
analogously. We have a morphism of $\mathcal{O}_{GL_nS}$-comodules
$$V\wedge \Lambda^{n-1}V\r \Lambda^n V,$$
and hence
\beg{elpcomp1}{V\wedge \Lambda^{n-1}V\r (\Lambda^n V)_p.
}
Thus, we have a morphism of $\mathcal{O}_{GL_nS}$-comodules
\beg{elpunit}{V_p\wedge_p (\Lambda^{n-1}V)_p\wedge_p ((\Lambda^n V)_p)^{-1}\r S_p.
}
Similarly, transfer gives a morphism of $\mathcal{O}_{GL_nS}$-comodules
\beg{elpcomp2}{\Lambda^n V\r (V\wedge \Lambda^{n-1}V)_p,
}
which gives a morphism of $\mathcal{O}_{GL_nS}$-comodules
\beg{elpcounit}{S_p\r V_p\wedge_p (\Lambda^{n-1}V)_p\wedge_p ((\Lambda^n V)_p)^{-1}.
} 
One further sees that \rref{elpunit} and \rref{elpcounit} forget to the unit and counit of strong 
duality in $M\Z/p$-local spectra, and hence define a unit and counit of strong duality in 
the derived category of
$(\mathcal{O}_{GL_nS}\dash Comod)_p$ up to equivalence. Strong duality and \rref{epcomplf1}
follow.
\end{proof}

\vspace{3mm}
Let us make a few more remarks on the theory of commutative $S$-Hopf algebras. The next lemma connects 
commutative $S$-Hopf algebras to $S$-Lie algebras, 
analogous to the classical correspondance between Lie groups and Lie algebras. 

\vspace{3mm}
\begin{lemma}\label{lagstab}
Let $R$ be a commutative Hopf algebra 
over $S$. Then there is a canonical structure of a $(\mathscr{C}_{k+1})_+$-coalgebra
on the $k$'th bar construction $B^k(R)$, and a canonical structure of a $\mathscr{L}$-coalgebra where
$\mathscr{L}$ is the Lie operad) on $QR$. Here, $Q$ denotes topological Quillen homology. 
\end{lemma}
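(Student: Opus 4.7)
The plan is to proceed by induction on $k$, using the iterated bar construction together with a coalgebra version of Dunn's operadic additivity to combine coalgebra structures. Unfolding the hypothesis, $R$ is an $\widetilde{\mathscr{C}}_{\infty+}$-algebra together with a compatible $\mathscr{C}_{1+}$-coalgebra structure whose comultiplication $\psi$ and counit $\epsilon$ are morphisms of commutative $S$-algebras. I would construct $B^k R$ as an iterated two-sided bar construction in commutative $S$-algebras: $BR = |B_\bullet(S, R, S)|$ with $B_n = S \wedge R^{\wedge n} \wedge S$ and faces built from the product $\mu$ and the augmentation. Because $\psi$ and $\epsilon$ are morphisms of commutative $S$-algebras, each level $R^{\wedge n}$ is canonically a $\mathscr{C}_{1+}$-coalgebra and the simplicial face/degeneracy maps of $B_\bullet R$ are morphisms of $\mathscr{C}_{1+}$-coalgebras; realization then endows $BR$ with a $\mathscr{C}_{1+}$-coalgebra structure which I will call the \emph{inherited} structure.

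A second $\mathscr{C}_{1+}$-coalgebra structure on $BR$ comes directly from the bar construction itself: shuffle/splitting of bar words $[r_1|\dots|r_{j+\ell}]\mapsto [r_1|\dots|r_j]\wedge [r_{j+1}|\dots|r_{j+\ell}]$ assembles into a coproduct $BR\to BR\wedge BR$ that uses only the $\widetilde{\mathscr{C}}_{\infty+}$-algebra structure on $R$. I will call this the \emph{bar} coalgebra structure. The two $\mathscr{C}_{1+}$-coalgebra structures on $BR$ act in independent simplicial directions: the inherited coproduct is internal to each simplicial degree of $B_\bullet R$, while the bar coproduct splits the simplicial direction. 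Applying the coalgebra dual of Dunn's additivity theorem then assembles two such compatible $\mathscr{C}_{1+}$-coalgebra structures into a $\mathscr{C}_{2+}$-coalgebra structure on $BR$.

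Iterating, $B^k R$ acquires a $\mathscr{C}_{(k+1)+}$-coalgebra structure: each application of $B$ consumes one further $\mathscr{C}_{1+}$-factor of the ambient $\widetilde{\mathscr{C}}_{\infty+}$-algebra structure, supplies one additional bar coproduct in a new simplicial direction compatible with all previously constructed $\mathscr{C}_{1+}$-coalgebra structures, and repeated additivity combines them into a $\mathscr{C}_{(k+1)+}$-coalgebra. The key verification is that successive bar coproducts satisfy the Dunn compatibility on the point-set level, which holds because the splittings occur in genuinely independent simplicial coordinates of the iterated bar simplicial object. One must also check that the resulting structures at different $k$ are compatible along the comparison maps $\mathscr{C}_{(k+1)+}\to \mathscr{J}\wedge \mathscr{C}_{k+}$ used to define $\mathscr{L}$ in Section~\ref{sslie}, which essentially says that one application of $B$ accounts for one suspension.

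For the $\mathscr{L}$-coalgebra structure on $QR$, I would identify topological Quillen homology of an augmented commutative $S$-algebra with $\operatornamewithlimits{hocolim}_k \Sigma^{-k} \overline{B}^k I_R$, where $\overline{B}^k$ is the $k$-th reduced bar construction on the augmentation ideal $I_R$, in the spirit of Basterra's computation of TAQ. Combined with the definition $\mathscr{L}=\holim \mathscr{T}^k\wedge \mathscr{C}_{(k+1)+}$ from Section~\ref{sslie}, the compatible $\mathscr{C}_{(k+1)+}$-coalgebra structures from the previous step, smashed with $\mathscr{T}^k$, assemble into the desired $\mathscr{L}$-coalgebra structure on $QR$. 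The main obstacle will be arranging the Eckmann--Hilton/additivity combination of coalgebra structures rigidly enough on the point-set level for the iteration and the final homotopy-limit assembly to be well-defined up to coherent homotopy: coalgebras in spectra are far less rigid than algebras, so one is forced to work in a derived setting throughout, replacing strict compatibility with a system of coherent homotopies indexed by the operad structure of $\mathscr{L}$.
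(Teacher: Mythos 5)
Your proposal matches the paper's proof: the paper likewise defines $QR$ as the Dwyer--Kan stabilization $\operatornamewithlimits{hocolim}_k \Sigma^{-k}B^k(R)$ and deduces the $\mathscr{L}$-coalgebra structure from the structure maps $\mathscr{T}\wedge\mathscr{C}_{(k+1)+}\to\mathscr{C}_{k+}$ defining $\mathscr{L}$, deferring the $(\mathscr{C}_{k+1})_+$-coalgebra structure on $B^k(R)$ to an argument ``analogous to \cite{hkv}''. That cited argument is precisely your dualized Dunn-additivity combination of the inherited coproduct with the bar-word splitting in independent simplicial directions, so you have reconstructed the intended proof in more detail than the paper itself records.
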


\begin{proof}
First of all, recall that topological Quillen homology can be defined by Dwyer-Kan stabilization:
$$QR=\operatornamewithlimits{holim}_\r \Sigma^{-k} B^k (R)$$
where by $\Sigma^{-k}$ we mean the $k$'th desuspension on the augmentation ideal (right adjoint
to the bar construction). Therefore, if we show that $B^k(R)$ has a canonical structure of
a $(\mathscr{C}_{k+1})_+$-coalgebra, $\Sigma^{-k}B^k R$ has a canonical structure of
a $\Sigma^{-k}(\mathscr{C}_{k+1})_+$-coalgebra, we know that $QR$ has a structure of
a $\mathscr{L}$-coalgebra by \rref{esslie1}.

The proof that $B^k(R)$ has a canonical structure of a $(\mathscr{C}_{k+1})_+$-coalgebra is analogous to
\cite{hkv}.
\end{proof}

\vspace{3mm}
We see that if $V$ is an $R$-comodule, then $V$ is also a right 
comodule over the co-Lie algebra $QV$. 

The next lemma shows that in
 particular, the commutative $S$-Hopf algebra $GL_nS$ corresponds to the $S$-Lie algebra $gl_n$. The 
same holds for the subalgebras of $gl_n$ that we have studied. 

\vspace{3mm}
\begin{lemma}\label{lquilgln}
One has
$$Q\mathscr{O}_{G_T}\sim g_T^\vee$$
as co-Lie algebras.
\end{lemma}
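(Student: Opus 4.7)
The strategy is to separately verify (i) the equivalence of underlying spectra $Q\mathscr{O}_{G_T}\simeq g_T^\vee$, and (ii) the matching of co-Lie structures. For (i), the plan is to combine two facts: first, that topological Quillen homology of a free $E_\infty$-algebra on a cofibrant spectrum $X$ satisfies $QC_\infty(X)\simeq X$, because $Q$ is the derived indecomposables functor and is thus left adjoint to the trivial-algebra functor $\mathscr{S}\to\mathscr{C}_{\infty+}\text{-alg}$, making $Q\circ C_\infty$ equivalent to the identity on cofibrant objects; second, that the localization map
\[
C_\infty(g_T^\vee)\longrightarrow det^{-1}C_\infty(g_T^\vee)=\mathscr{O}_{G_T}
\]
does not alter $Q$. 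The key point for the second fact is that the canonical augmentation $\mathscr{O}_{G_T}\to S$, corresponding to the identity element of the ``algebraic group'' $G_T$, sends $det$ to a unit in $\pi_0 S$. Consequently, the localization is formally \'etale in a homotopical sense, and the transitivity cofiber sequence for relative cotangent complexes, together with the vanishing of the relative term $L_{\mathscr{O}_{G_T}/C_\infty(g_T^\vee)}$, yields $Q\mathscr{O}_{G_T}\simeq QC_\infty(g_T^\vee)\simeq g_T^\vee$.

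For (ii), the plan is to observe that both co-Lie structures trace back to the same multicategorical input. On the one hand, $g_T$ was constructed by applying the Elmendorf--Mandell $\mathscr{K}$-functor to a multifunctor $\Sigma\to Perm$ and pulling back along $u:\mathscr{L}\to\mathscr{C}_{1+}$; dualizing gives $g_T^\vee$ the structure of a $\mathscr{L}$-coalgebra using the standard cofree-comodule machinery. On the other hand, the commutative Hopf algebra structure on $\mathscr{O}_{G_T}=det^{-1}C_\infty(g_T^\vee)$ encodes the matrix multiplication on $G_T$, adjointly classified by the identity $g_T^\vee\to g_T^\vee$ together with the diagonal and the $E_\infty$-multiplication. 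The iterated bar construction of Lemma~\ref{lagstab} then produces the $\mathscr{L}$-coalgebra structure on $Q\mathscr{O}_{G_T}$, which ``differentiates'' the group law to the Lie bracket. Concretely, the plan is to check that under the equivalence $QC_\infty(g_T^\vee)\simeq g_T^\vee$, the comultiplication on $\mathscr{O}_{G_T}$ induces, via iterated $B^k$ and desuspension, the dual of the $\mathscr{L}$-action on $g_T$, by naturality of all constructions with respect to the multicategorical source.

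The main obstacle will be the structural matching in step (ii). The equivalence of underlying spectra is relatively formal, but identifying the co-Lie cobracket on $Q\mathscr{O}_{G_T}$ with the Lie-dual cobracket on $g_T^\vee$ requires a careful traversal of the bar spectral sequence and a compatibility between the Elmendorf--Mandell construction on the group side and on the Lie algebra side. In classical algebraic geometry this is the statement that the Lie algebra of an algebraic group is the cotangent space at the identity with bracket induced by conjugation; here the analogous statement must be proved at the level of structure maps in the derived category, and it is essentially here that the correspondence with the map $u:\mathscr{L}\to \mathscr{C}_{1+}$ from Proposition~\ref{psslie1}, composed with Koszul duality, will have to be invoked to identify the two co-Lie structures on the nose.
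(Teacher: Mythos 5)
Your decomposition is the same as the paper's: reduce to (a) $QC_\infty X\simeq X$ and (b) invariance of $Q$ under inverting the determinant class, and this is correct. The two justifications of (b) differ: you argue that the localization is formally \'etale, so the relative cotangent complex vanishes and transitivity gives the claim, whereas the paper simply observes that the inverted class disappears after one bar construction (so the Dwyer--Kan stabilization defining $Q$ is unaffected). Both work; your version is the more standard derived-algebraic-geometry argument, the paper's is more elementary and stays inside the bar-construction formalism it has already set up in Lemma \ref{lagstab}. The more substantive difference is in step (ii). You treat the co-Lie structures on the two sides as independently constructed objects that must be matched by a traversal of the bar spectral sequence plus Koszul duality, and you flag this as the main obstacle. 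The paper avoids this comparison entirely: $g_T^\vee$ is itself a $\mathscr{C}_{1+}$-coalgebra (the dual of the associative matrix algebra $g_T$), hence an $\mathscr{L}$-coalgebra by restriction along $u:\mathscr{L}\to\mathscr{C}_{1+}$, and the assertion is that $QC_\infty X\sim X$ holds \emph{as $S$-Lie coalgebras} naturally in the $\mathscr{C}_{1+}$-coalgebra $X$ --- i.e.\ the coalgebra structure that Lemma \ref{lagstab} puts on $QC_\infty X$ is the one induced from $X$, because the comultiplication on $C_\infty(g_T^\vee)$ is the free extension of the one on $g_T^\vee$. So there is only one co-Lie structure in play and nothing to match by hand; no appeal to Koszul duality is needed at this point. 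Your outline would presumably arrive at the same place, but the paper's formulation makes the ``main obstacle'' you identify dissolve into a naturality statement.
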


\begin{proof}
For a $\mathscr{C}_{1+}$-coalgebra $X$, one has
$$QC_{\infty}X\sim X$$
as an $S$-Lie coalgebra (recall the morphism of operads $\mathscr{L}\r \mathscr{C}_{1+}$). Inverting 
the determinant class does not affect the Quillen homology. (It disappears after one bar construction.)
\end{proof}

\vspace{3mm}

\vspace{3mm}
\subsection{Harish-Chandra pairs and their representations}\label{sshcc}

Harish-Chandra pairs appear in many areas of representation theory (for example algebraic, compact Lie, affine, super 
groups and algebras),
see \cite{knappv}. 
The basic idea is obvious - we want
a notion of a group representation which is simultaneously, and compatibly, a representation of a Lie algebra.
One must be mindful, however, of subtle details of the definition which change depending on the context.

In this paper, by a {\em pre-Harish-Chandra pair} $(R, g)$,  we mean simply a commutative $S$-Hopf algebra $R$,
and an $R$-equivariant $S$-Lie algebra $g$. More generally, for an operad $\mathscr{C}$ in $\mathscr{S}$,
and a commutative $S$-Hopf algebra $R$, an {\em $R$-equivariant $\mathscr{C}$-algebra $X$} 
is defined as an object of $\mathscr{S}$ which has both the structure of an $R$-comodule and
a $\mathscr{C}$-algebra, such that the following diagram commutes for $n\geq 2$:
\beg{eequivopalg}{\resizebox{0.95\hsize}{!}{%
$\diagram
\mathscr{A}(m+1)^R\wedge\mathscr{C}(n)\wedge X\wedge\dots\wedge X\rrto 
\dto^{T\circ(\Delta\wedge Id)}&&
\mathscr{A}(m+1)^R\wedge X\xto[3,0]\\
\mathscr{C}(n)\wedge\mathscr{A}(m+1)^R\wedge X\wedge \dots \wedge \mathscr{A}(m+1)^R\wedge X\dto &&\\
\mathscr{C}(n)\wedge X\wedge \underbrace{R\wedge\dots\wedge R}_{m}\wedge\dots
\wedge X\wedge \underbrace{R\wedge\dots\wedge R}_{m}\dto^{(Id\wedge \phi)} &&\\
\mathscr{C}(n)\wedge X\wedge\dots\wedge X\wedge R\wedge\dots\wedge R\rrto &&
X\wedge \underbrace{R\wedge\dots\wedge R}_{m}
\enddiagram$}%
}
(see Subsection \ref{ssdlr} for the meaning of $\mathscr{A}(n)^R$).
Here for simplicity, we denote all shuffles by $T$, all diagonals by $\Delta$ and all products by $\phi$.

A {\em morphism} $f$ from an $R_1$-equivariant $\mathscr{C}$-algebra $X_1$ to an
$R_2$-equi-variant $\mathscr{C}$-algebra $X_2$ is defined as a morphism of commutative $S$-Hopf
algebras
$$f_R:R_2\r R_1$$
and a morphism of $\mathscr{C}$-algebras 
$$f_X:X_1\r X_2$$
which satisfy the obvious commutative diagram. In particular, this defines morphisms of pre-Harish-Chandra
pairs. Notice that we put the contravariance 
into the commutative Hopf algebra coordinate. This is because we think of the Hopf algebras as 
coordinate rings of algebraic groups, in which morphisms would be ordinarily written contravariantly.

\vspace{3mm}
If $R$ is a commutative Hopf algebra over $S$ and $\mathscr{C}$ is an operad in $\mathscr{S}$, 
and $X$ is an $R$-equivariant $\mathscr{C}$-algebra, then we can define an $R$-equivariant
$(\mathscr{C},X)$-module $Y$ as an $R$-comodule which is also a $(\mathscr{C},X)$-module,
and the obvious analogue of diagram \rref{eequivopalg} where we replace in each entry,
one $X$ by $Y$, commutes. Morphisms are again defined in the obvious way. 

\vspace{3mm}



Let $R$ be a commutative $S$-Hopf algebra. Then the stabilization (see Lemma \ref{lagstab})
\beg{erqr}{R\r QR
}
is a morphism of $S$-Lie coalgebras. In our examples, $QR$ is generally strongly dualizable, but
our difficulty is that we do not know how to construct an $R$-equivariant structure on $QR^\vee$.
Therefore, we introduce another piece of data, namely morphisms of $S$-Lie algebras
\beg{erqr++}{QR^\vee\leftarrow q\r \rho
}
where $\rho$ is given a structure of an $R$-equivariant $S$-Lie algebra. (Of course, we think of the morphisms
of \rref{erqr++} be ``close to equivalences" in an appropriate sense, but it is not suitable to make this a part 
of the formal structure. The basic example, which we will need to generalize, is $R=\mathscr{O}_{GL_nS}$,
$q=gl_nS$, $\rho=\mathscr{F}_{\mathscr{O}_{GL_nS}}(V_p,V_p)$ of Lemma \ref{lpcomplf}.)

Given a commutative $S$-Hopf algebra $R$ and morphisms of $S$-Lie algebras \rref{erqr++} where 
$\rho$ is given an additional structure of an $R$-equivariant $S$-Lie algebra, a 
{\em Harish-Chandra pair} $(R,g,\gamma,\rho, q)$ (briefly $(R,g)$) relative to the data \rref{erqr++}
is a pre-Harish-Chandra pair $(R,g)$ together with a morphism of $R$-equivariant $S$-Lie algebras
$$\gamma:\rho\r g.$$
Note that for the given data \rref{erqr++}, there is a ``universal" example of a Harish-Chandra pair, namely
$(R,\rho,Id,\rho, q)$.

A morphism of Harish-Chandra pairs
$$(R_1,g_1,\gamma_1,\rho_1,q_1)\r (R_2,g_2,\gamma_2,\rho_2,q_2)$$
consists of a morphism of commutative $S$-Hopf algebras
$$\diagram
R_2\rto^{f_R} & R_1,
\enddiagram
$$
a morphism of $S$-Lie algebras
$$\diagram
q_1\rto^{f_q} & q_2,
\enddiagram
$$
a morphism of $R$-equivariant $S$-Lie algebras
$$\diagram
\rho_1\rto^{f_\rho}&\rho_2
\enddiagram
$$
and a diagram of $S$-Lie coalgebras
$$\diagram
QR_1^\vee\rto^{Qf_R^\vee} & QR_2^\vee\\
q_1\uto\rto^{f_q}\dto & q_2\dto\uto\\
\rho_1\rto^{f_\rho} & \rho_2
\enddiagram
$$
and a diagram of $S$-Lie algebras
$$\diagram
\rho_1\rto^{f_\rho}\dto &\rho_2\dto\\
g_1\rto^{f_g}&g_2
\enddiagram
$$
equivariant with respect to the diagram of commutative $S$-Hopf algebras
$$\diagram
R_1 & R_2\lto_{f_R}\\
R_1\uto^{Id} &R_2\uto_{Id}\lto_{f_R}.
\enddiagram
$$
A {\em representation} of (or {\em module} over) a Harish-Chandra pair $(R,g,\gamma,\rho,q)$
is an $R$-equivariant $(\mathscr{L},g)$-module whose underlying $(\mathscr{L},q)$-module
structure coincides with the $(\mathscr{L},q)$-module structure coming from the 
$(\mathscr{L},QR)$-comodule structure arising from $R$-equivariance. (Recall again that
comodules over operad coalgebras dualize to modules over operad algebras, but not vice versa.)

Modules over a Harish-Chandra pair form a full subcategory
of the category of modules over the underlying pre-Harish-Chandra pair.

\vspace{3mm}
For any commutative $S$-Hopf algebra $R$ and any $R$-comodule $W$, $\mathscr{F}_R(W,W)$ is, by adjunction, 
a $R$-equivariant associative algebra (and hence $R$-equivariant $S$-Lie algebra). 
We will be especially interested in the case 
when $R=\mathscr{O}_{GL_nS}$, and $W=V$ is the standard representation. Then by Lemma 
\ref{lpcomplf}, at least up to the eyes of $M\Z/p$, we can think of $\mathscr{F}_{\mathcal{O}_{GL_nS}}(V_p,V_p)$ as
an $\mathscr{O}_{GL_nS}$-model of $gl_nS$, which we already know is its Lie algebra by Quillen 
cohomology (cf. Lemma \ref{lquilgln}).

For our purposes, though, we will need to consider a somewhat more general example. Concretely, 
we will have 
\beg{eordpart1}{n=k_1+\dots+k_m,}
(which we will refer to as an {\em ordered partition $\mathbf{k}=(k_1,\dots,k_m)$ of $n$})
and
$$R=R_\mathbf{k}=\mathscr{O}_{(GL_{k_1} \times\dots\times GL_{k_m})(S)}.$$
We will also have some
$$i_1+\dots +i_s=m,$$
and
\beg{eordpart2}{l_j=k_{i_1+\dots +i_{j-1}+1}+\dots+k_{i_1+\dots +i_j}.}
(We will refer to the ordered partition \rref{eordpart1} as
a {\em refinement} of the ordered partition $\mathbf{l}=(l_1,\dots, l_s)$ of
\rref{eordpart2}.)
Let $W$ be the standard $GL_nS$-representation. Then as an $R$-comodule, $W$ has an increasing filtration
$\Phi$ where $\Phi^jW$ is the standard $GL_{\ell_1+\dots +\ell_j}S$-representation. We may define
a category of filtered $R$-comodules in the obvious way, and obtain the $R$-equivariant associative algebra
(hence, $S$-Lie algebra)
\beg{eparabphi}{\mathbf{p}=\mathbf{p}_\mathbf{l}=\mathscr{F}^\Phi_R(W_p,W_p)}
where the superscript means the function object in the filtered $R$-comodule category. The notation $\mathbf{p}$ 
stands for {\em parabolic}, as these are examples of parabolic Lie subalgebras of $gl_nS$.

\vspace{3mm}
\begin{lemma}\label{llparab}
With the above notation, we have an equivalence of 
$R$-equivariant associative algebras
\beg{ellparab}{\mathscr{F}^\Phi_R(W_p,W_p)\r \mathscr{F}^\Phi(W_p,W_p)
}
where the right hand side denotes the analogous construction in the filtered category of spectra.
\end{lemma}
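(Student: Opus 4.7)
The plan is to reduce the filtered statement to the unfiltered Lemma~\ref{lpcomplf} by exploiting the fact that the filtration $\Phi$ on $W$ splits as a filtration of $R$-comodules. First I would observe that, because $\mathbf{k}$ refines $\mathbf{l}$, the restriction of the standard $GL_n S$-representation $W$ to $R = \mathscr{O}_{(GL_{k_1}\times\cdots\times GL_{k_m})S}$ decomposes as
$$ W \;\simeq\; V_{k_1} \vee \cdots \vee V_{k_m} $$
in the $R$-comodule category, where each $V_{k_i}$ is the standard $GL_{k_i} S$-representation pulled back along the $i$'th projection $\mathscr{O}_{GL_{k_i} S} \to R$. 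Since $\Phi^j W$ is precisely the sub-wedge consisting of the first $i_1+\cdots +i_j$ summands, the filtration $\Phi$ splits literally as an ordered sum of $R$-comodules, and a fortiori as a filtration of the underlying spectra.

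From this splitting, the filtered function object on both sides decomposes as a product over pairs of indices with $b\leq a$:
$$ \mathscr{F}^{\Phi}_R(W_p, W_p) \;\simeq\; \prod_{b \leq a} \mathscr{F}_R\bigl( gr^a W_p,\, gr^b W_p \bigr), $$
and the same holds in the filtered spectrum category with $\mathscr{F}_R$ replaced by $F$. The forgetful functor $\mathscr{U}$ respects this decomposition termwise, since its symmetric monoidality on smash products is preserved by extraction of summands. Hence it suffices to prove, for each pair of indices $\alpha,\beta\in\{1,\dots,m\}$, that the canonical comparison
$$ \mathscr{U}\mathscr{F}_R\bigl(V_{k_\alpha, p}, V_{k_\beta, p}\bigr) \;\to\; F\bigl(V_{k_\alpha, p}, V_{k_\beta, p}\bigr) $$
is an equivalence of spectra.

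For this I would invoke strong dualizability. Lemma~\ref{lpcomplf} applied to $GL_{k_\alpha}S$ asserts that $V_{k_\alpha,p}$ is strongly dualizable as an $\mathscr{O}_{GL_{k_\alpha}S}$-comodule, with explicit dual built from exterior powers of $V_{k_\alpha}$ and the inverse of $(\Lambda^{k_\alpha}V_{k_\alpha})_p$. Pulling this data back along the Hopf-algebra map $\mathscr{O}_{GL_{k_\alpha} S}\to R$ produces a strong dual $V_{k_\alpha, p}^{\vee}$ in the $R$-comodule category (restriction along Hopf-algebra maps is symmetric monoidal, and so preserves strong dualizability together with its explicit unit and counit witnesses). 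Consequently
$$ \mathscr{F}_R(V_{k_\alpha,p},V_{k_\beta,p}) \;\simeq\; V_{k_\alpha,p}^{\vee}\wedge_p V_{k_\beta,p}, $$
and the forgetful functor takes this to the underlying $M\Z/p$-local smash product of spectra, which is $F(V_{k_\alpha,p},V_{k_\beta,p})$ by the spectrum-level strong duality supplied by Lemma~\ref{lpcomplf}.

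Finally I would verify compatibility with the $R$-equivariant associative algebra structure. The multiplications on both sides arise from composition of filtration-preserving endomorphisms, which in the product decomposition amounts to matrix-style composition of the components $\mathscr{F}_R(gr^a, gr^b)$; this is evidently preserved by $\mathscr{U}$ and by the strong-duality identification. The main delicate point, which I expect to be the principal obstacle, is to produce this comparison as an honest morphism of $R$-equivariant $1$-operad algebras rather than merely a homotopy equivalence of underlying spectra. However, the explicit models used in the proof of Lemma~\ref{lpcomplf} (built from exterior powers and transfer) assemble naturally under the filtered-product decomposition, so this should follow by the same rigidification arguments used there.
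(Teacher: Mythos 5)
Your proposal is correct and follows essentially the same route as the paper: the paper's (one-sentence) proof likewise decomposes $\mathscr{F}^\Phi_R(W_p,W_p)$ as a sum over pairs $j_1\leq j_2$ of function objects $\mathscr{F}_R((W_{k_{j_1}})_p,(W_{k_{j_2}})_p)$ between the standard pieces pushed forward to $R$, with the term-by-term comparison to $F$ supplied implicitly by the strong dualizability established in Lemma \ref{lpcomplf}. You have simply made explicit the steps the paper leaves to the reader.
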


\begin{proof}
One sees that, by definition, in the category of $R$-comodules,
$$\mathscr{F}^\Phi_R(W_p,W_p)\sim \bigvee_{j_1\leq j_2} \mathscr{F}_R((W_{k_{j_1}})_p, (W_{k_{j_2}})_p)
$$
where $W_{k_j}$ are the pushforwards of the basic comodules over $\mathcal{O}_{GL_{k_j}S}$ to $R$.
\end{proof}

In fact, all the morphisms of Harish-Chandra pairs we will consider will be of the form
\beg{ehskappa}{\kappa:(R_\mathbf{k_1},\mathbf{p}_\mathbf{l_1})\r(R_\mathbf{k_2},\mathbf{p}_\mathbf{l_2})}
where
$\mathbf{k_1}$ is a refinement of $\mathbf{k_2}$ and $\mathbf{l_1}$ is a refinement of $\mathbf{l_2}$.

A variant of this construction is if we replace the filtration $\Phi$ by {\em grading}, which means taking
\beg{eee79}{\ell=\ell_{\mathbf{l}}=\mathscr{F}_R(V,V)}
where $V$ is the standard representation of $R_{\mathbf{l}}$ (product of the standard representations
of the $GL_{\ell_j}S$'s). We will also consider the morphism
\beg{ehspi}{\pi:(R_{\mathbf{k_1}},\mathbf{p}_\mathbf{l})\r (R_{\mathbf{k_1}},\ell_\mathbf{l}).}
We refer to $\ell_\mathbf{l}$ as the {\em Levi factor} of the corresponding parabolic $\mathbf{p}_\mathbf{l}$.

\vspace{3mm}
As already mentioned, our basic example of a Harish-Chandra pair is 
$$R=\mathscr{O}_{(GL_{s_1}\times\dots\times GL_{s_m})(S)},\;
q=(gl_{s_1}\times\dots\times gl_{s_m})(S),\;
\rho=\ell$$
(see \rref{eee79}). Our basic examples of $(R,g)$-modules are of the form
$$(\Lambda^{t_1}V_{(1)}\wedge\dots\wedge \Lambda^{t_m}V_{(m)})_p$$
where $V_{(i)}$ is the standard representation of $\mathscr{O}_{GL_{s_i}S}$.

\vspace{3mm}
There are forgetful functors from 
$$U_{R,g}:(R,g)\dash Mod\r R\dash Comod,$$ 
$$U_R:R\dash Comod\r\mathscr{S},$$
$$V_{R,g}:(R,g)\dash Mod\r g\dash Mod,$$
$$V_g:g\dash Mod\r \mathscr{S}.$$
All of these functors have right adjoints, and all are comonadic. 
The comonad $\mathscr{H}=\mathscr{H}_{(R,g)}$ corresponding
to the composition 
$$U=U_R\circ U_{R,g} =V_g\circ V_{R,g}$$
is called the {\em Hecke comonad} corresponding to the Harish-Chandra pair $(R,g)$.

For a morphism of Harish-Chandra pairs 
\beg{efff}{f:(R_1,g_1)\r (R_2,g_2)}
we have a forgetful functor (i.e. pullback)
\beg{ehsres10}{f^*:(R_2,g_2)\dash Mod\r (R_1,g_1)\dash Mod.
}

\begin{proposition}\label{pindr}
The functor \rref{ehsres10} preserves equivalences, and there is a functor
\beg{eindfuntor}{f_*:(R_1,g_1)\dash Mod\r (R_2,g_2)\dash Mod}
which is its right adjoint on the level of derived categories. Additionally, if $R_1=R_2$,
we have a commutative diagram
up to equivalence:
\beg{eindfunctor1}{\diagram
(R,g_1)\dash Mod\rto^{f_*}\dto^{V_{(R,g_1)}} &
(R,g_2)\dash Mod\dto^{V_{(R,g_2)}}\\
g_1\dash Mod\rto^{\phi_*} & g_2\dash Mod
\enddiagram 
}
where $\phi:g_1\r g_2$ is the underlying morphism of $S$-Lie algebras.
\end{proposition}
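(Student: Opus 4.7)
The plan is to proceed in three stages. First, $f^*$ preserves equivalences: the forgetful functor $U_i\colon (R_i,g_i)\dash Mod\to\mathscr{S}$ detects equivalences by comonadicity, and $U_1\circ f^*=U_2$ since $f^*$ only restricts structure along $f_R$, $f_g$, $f_\rho$, $f_q$ without altering the underlying spectrum. Hence $f^*$ both preserves and reflects equivalences.

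To construct $f_*$, I would use that each $U_i$ is comonadic over $\mathscr{S}$ with Hecke comonad $\mathscr{H}_i=\mathscr{H}_{(R_i,g_i)}$, and that the morphism of Harish-Chandra pairs $f$ induces a morphism of comonads intertwining $f^*$ with the forgetful functors. Restriction along a comonad morphism admits a right adjoint on the derived level, given by a two-sided cobar construction of the form
\[ f_*(M)\;\simeq\;C^{\bullet}\bigl(\mathscr{H}_2,\,\mathscr{H}_1,\,M\bigr); \]
alternatively, since $f^*$ preserves all colimits (being pullback of structure), Brown representability on the triangulated level suffices to produce a right adjoint in the derived category.

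For the commutativity of \rref{eindfunctor1} when $R_1=R_2=R$, the morphism $f$ has trivial Hopf-algebra component, so the induced comonad map is the identity on the ``$R$-coaction'' piece of $\mathscr{H}_{(R,g)}$ and is induced by $\phi$ on the ``Lie'' piece. Since this coaction piece splits off up to homotopy, the cobar formula for $f_*$ reduces, after applying $V_{(R,g_2)}$, to the cobar model for the derived coinduction $\phi_*$ of $V_{(R,g_1)}M$; translating via Proposition~\ref{pdrl1} to left $Ug_2$-modules gives the asserted commutativity.

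The main obstacle is the construction of the comonad morphism $\mathscr{H}_1\to\mathscr{H}_2$, together with the splitting of $\mathscr{H}_{(R,g)}$ into $R$-coaction and $Ug$-action pieces used in the third step. Both require packaging the contravariant Hopf-algebra coaction with the covariant Lie-algebra action on a rigid operadic level (where we do not even have a conjugation on $R$, as noted before Lemma~\ref{lpcomplf}), and ensuring compatibility with the $(\mathscr{L},q)$-module structure that arises from $R$-equivariance. Once these rigidifications are in place, the adjunction and the verification of \rref{eindfunctor1} follow essentially formally.
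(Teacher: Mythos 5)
Your proposal matches the paper's argument: the paper also constructs $f_*$ as the two-sided cobar construction $Cobar(?,\mathscr{H}_{(R_1,g_1)},\mathscr{H}_{(R_2,g_2)})$ of the Hecke comonads, and derives the commutativity of \rref{eindfunctor1} from the compatibility of the forgetful functor $U_R$ with the relevant smash products, which is the same mechanism as your ``splitting'' of the coaction piece. Your opening argument that $f^*$ preserves equivalences (via $U_1\circ f^*=U_2$ and comonadicity) fills in a step the paper leaves implicit, but the route is essentially identical.
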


\begin{proof}
The functor $f_*$ is constructed by the $2$-sided cobar construction of Hecke comonads
$$f_*(?)=Cobar(?,\mathscr{H}_{(R_1,g_1)},\mathscr{H}_{(R_2,g_2)}).$$
The commutativity of diagram \rref{eindfunctor1} is essentially due to the fact that 
$U_R$ takes the smash of $R$-comodules to the smash product of spectra.
\end{proof}

\vspace{3mm}
Suppose we are given an $(R_{\mathbf{k}},\ell)$-module $W$ where $\ell$ is the Levi factor of a parabolic 
$\mathbf{p}_\mathbf{l}$.
(Note that the standard representation of $\ell$, and extended powers of its suspensions, are examples
of such representations $W$.)
Suppose the ordered partition $\mathbf{l}$ is a refinement of another ordered partition $\mathbf{m}$. Then
\beg{egenverma}{V_{\mathbf{l},\mathbf{m},W}=\kappa_*\pi^*W
}
where $\kappa$ is the morphism of Harish-Chandra pairs \rref{ehskappa} with $\mathbf{l_1}=\mathbf{l}$,
$\mathbf{l_2}=\mathbf{m}$, $\mathbf{k_1}=\mathbf{k_2}=\mathbf{k}$, and $\pi$ is the morphism
of Harish-Chandra pairs \rref{ehspi} where $\mathbf{k_1}=\mathbf{k}$. We refer to the representation
\rref{egenverma} of the Harish-Chandra pair $(R_{\mathbf{k}},\mathbf{p}_{\mathbf{m}})$ as a
{\em generalized co-Verma module}.

\vspace{3mm}
\noindent
{\bf Example:} Let us consider the case $n=2$, $R_1=\mathscr{O}_H$, $R_2=\mathscr{O}_{GL_2}$, 
$\mathbf{k}=(1,1)$, $\mathbf{l}=(2)$, $H$ stands for diagonal matrices. Consider the morphism
$$\kappa:(R_1,\mathbf{p}_\mathbf{k})\r (R_2,\mathbf{p}_\mathbf{l}).$$
Let further
$V^\lambda$ be the $(R_1,\mathbf{p}_\mathbf{l})$-co-Verma module on a $\mathscr{O}_H$-character $S^\lambda$
where $\lambda$ is a dominant integral weight.
We wish to compute 
\beg{eindv}{\kappa_*(V^\lambda).
}
As a warm-up, let us consider the case of algeraic groups over $\C$. We can, of course, interpret this
as a case of our setup, replacing $\C$ by the commutative (Eilenberg-MacLane)
$S$-algebra $H\C$. However, let us first work 
completely classically, i.e. with commutative $\C$-algebras in the classical sense. Then the morphism 
of Hecke 
comonads is interpreted as a map of commutative graded Hopf algebras
\beg{ehopfmap1}{\mathscr{O}_{GL_2}\r\mathscr{O}_{B}\otimes (Un_-)^\vee.
}
(As elsewhere in the paper, we set $B=B_+$, the subgroup of lower triangular matrices.)

Thus, we must study the Bruhat decomposition
\beg{ebruh}{B\times N_-\r GL_2.
}
On matrices, this is
$$
\left(\begin{array}{cc}
a_{11} & a_{12}\\
a_{21} & a_{22}
\end{array}
\right)=
\left(\begin{array}{cc}
b_{11} & 0\\
b_{21} & b_{22}
\end{array}
\right)\cdot
\left(\begin{array}{cc}
1 & x\\
0 & 1
\end{array}
\right)=
\left(\begin{array}{cc}
b_{11} & xb_{11}\\
b_{21} & xb_{21}+b_{22}
\end{array}
\right).
$$
Thus, the $\mathscr{O}_?$ of \rref{ebruh} can be written as
\beg{ehopfmap2}{
\begin{array}{lll}
a_{11} & \mapsto  & b_{11}\\
a_{12} & \mapsto & xb_{11}\\
a_{21} & \mapsto & b_{21}\\
a_{22} &\mapsto & xb_{21} + b_{22}.
\end{array}
}
Now in \rref{ehopfmap1}, $Un_-$ should be thought of as a divided power algebra on generators 
\beg{ehopfmap3}{\gamma_n =\frac{x^n}{n!}}
(although, of course, over $\C$, this is just $\C[x]$), so we can write \rref{ehopfmap1} as
\beg{ehopfmap4}{det^{-1}\C[a_{11},a_{12},a_{21},a_{22}]
\r (b_{11}b_{22})^{-1}\C[b_{11},b_{21},b_{22}]\otimes \C\{\gamma_{0},\gamma_{1},\gamma_2,\dots\}
}
given by \rref{ehopfmap2}. We see in any case from \rref{ehopfmap2} that in the sub-$\C$-module of the
target generated by monomials not containing $b_{21}$ (which corresponds to co-Verma modules),
the possible powers of $x$ occurring in a monomial which has $b_{11}^\ell$ are 
$$x^{0},\dots,x^{\ell}.$$
This corresponds to the fact that $\kappa_*(V^\lambda)$ is the irreducible $GL_2$-repre-sentation of
weight $\lambda$.

To interpret this example over $S$, we note that basically one can argue the same way.
One difficulty is that
$$Un_-^\vee=F(C_\infty S, S)$$
which has been calculated by Carlsson's theorem \cite{carlsson}, is in general more complicated than over
$\C$. The other difficulty is the denominator in \rref{ehopfmap3}, which will cause disruptions at large weights. 

Because of this, in the present paper, we restrict to a `large prime' setting. Of course, the `small prime' case is 
in principle more interesting, and we will return to it in future work. In the present example,
we see that if we work in the $p$-completed category of spectra
(localized at $M\Z/p$) and that the $\rho$-shifted components of the weight $\lambda$ of our 
co-Verma module are non-negative integers $<<p$, then the argument goes through and 
we see that $\kappa_*(V^\lambda)$ is a finite $GL_2$-representation of dimension $\ell+1$ where the
difference between the first and second coordinate of $\lambda$ is $\ell$.

\vspace{5mm}
\subsection{Localization}\label{ssapprox}
When dealing with Harish-Chandra pairs of the form $(R_\mathbf{k},\mathbf{p}_\mathbf{l})$ and morphisms
of the form \rref{ehskappa} (resp. \rref{ehspi}), we have a variant of all the constructions described
so far in this section where everything is graded by $gl_n$-weights. From now to the end of the present paper,
we will work in this graded context. Additionally, we will work in the categories of 
$(R_\mathbf{k},\mathbf{p}_\mathbf{l})$-modules which have a fnite set of highest weights 
(generalized Verma co-modules
of the form \rref{egenverma} are an example, provided that $W$ has a highest weight). Recall that 
classically in representation theory, a {\em highest weight
module} is a module generated by a vector $v$ which is annihilated by all positive root spaces in $g$. We have a 
weaker condition in mind here: we simply require that there be a finite set of weights $\{\lambda_1,\dots,\lambda_n\}$
such that the module is concentrated in weights $\lambda_i+\mu$ where $\mu$ runs through linear combinations 
with coefficients in $\N_0$ of
negative roots.

Additionally
still, we will Bousfield-localize the full subcategory of $(R_\mathbf{k},\mathbf{p}_\mathbf{l})$-modules with highest weight
at the Moore spectrum $M\Z/p$ (see \cite{bousfield} and \cite{ekmm}, Chapter VIII, for the Bousfield localization functor). 
We will also always assume $k_i<<p$. We will denote this category by
$$(R_\mathbf{k},\mathbf{p}_\mathbf{l})\dash hwMod.$$
But sometimes we wish to restrict attention even further, to full subcategories of objects which arise 
by taking fibrations and homotopy limits of co-Verma modules coming from characters $S^\lambda$, where
$\lambda$ is bounded in an appropriate sense.

This can be addressed through the concept of {\em localization}. Suppose we have a Harish-Chandra pair
$(R_\mathbf{k},\mathbf{p}_\mathbf{l})$, and a set 
$$\mathfrak{E}\subseteq Obj((R_\mathbf{k},\mathbf{p}_\mathbf{l})\dash hwMod).$$
We say that an object $X$ of $(R_\mathbf{k},\mathbf{p}_\mathbf{l})\dash hwMod$ 
is {\em $\mathfrak{E}$-local} if for every object
$Q$ of $(R_\mathbf{k},\mathbf{p}_\mathbf{l})\dash hwMod$ which satisfies
$$(R_\mathbf{k},\mathbf{p}_\mathbf{l})\dash hwMod_*(Q,E)=0 \;\text{for all $E\in \mathfrak{E}$},$$
we have
$$(R_\mathbf{k},\mathbf{p}_\mathbf{l})\dash hwMod_*(Q,X)=0.$$
(Here $?_*$ denotes the graded morphism group, i.e. allowing arbitrary $\Z$-suspensions of the 
objects involved.) The full subcategory of $(R_\mathbf{k},\mathbf{p}_\mathbf{l})\dash hwMod$ on $\mathfrak{E}$-local
objects will be denoted by $\mathfrak{E}\dash (R_\mathbf{k},\mathbf{p}_\mathbf{l})\dash hwMod$.

\vspace{3mm}

\begin{proposition}\label{papp1}
Under the assumptions of the beginning of this subsection, the inclusion
$$\mathfrak{E}\dash(R_\mathbf{k},\mathbf{p}_\mathbf{l})\dash hwMod\subseteq 
(R_\mathbf{k},\mathbf{p}_\mathbf{l})\dash hwMod$$
has a left adjoint $L_\mathfrak{E}$, called {\em localization}. Additionally, given a morphism
of Harish-Chandra pairs
$$f:(R_1,g_1)\r (R_2,g_2)$$
satisfying the same assumptions as those of Proposition \ref{pindr},
let $E\subseteq (R_1,g_1)\dash hwMod$ and let
$$f_*(\mathfrak{E})=\{f_*(E)\mid E\in \mathfrak{E}\}.$$
Then $f_*$ restricts to a functor
$$f_{*,\mathfrak{E}}:
\mathfrak{E}\dash(R_1,g_1)\dash hwMod\r f_*\mathfrak{E}\dash(R_2,g_2)\dash hwMod,$$
which is right adjoint to the functor
$$f^{*,\mathfrak{E}}:
f_*\mathfrak{E}\dash(R_2,g_2)\dash hwMod\r \mathfrak{E}\dash(R_1,g_1)\dash hwMod$$
given by
$$f^{*,\mathfrak{E}}(X)=L_\mathfrak{E}f^*(X).$$
\end{proposition}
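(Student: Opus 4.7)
The plan is to construct the reflection $L_\mathfrak{E}$ by a Bousfield-style small object argument, and then to derive the statements about $f^{*,\mathfrak{E}}$ and $f_{*,\mathfrak{E}}$ formally from the adjunction $(f^*, f_*)$ of Proposition \ref{pindr}.

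First, I would call a morphism $h$ in $(R_\mathbf{k},\mathbf{p}_\mathbf{l})\dash hwMod$ an \emph{$\mathfrak{E}$-equivalence} if the induced map $(R_\mathbf{k},\mathbf{p}_\mathbf{l})\dash hwMod_*(h, E)$ is an isomorphism for every $E \in \mathfrak{E}$, equivalently if its homotopy cofiber is $\mathfrak{E}$-acyclic in the sense already defined; the $\mathfrak{E}$-local objects are then precisely those that see every $\mathfrak{E}$-equivalence as an isomorphism on graded morphism groups. Equipping $(R_\mathbf{k},\mathbf{p}_\mathbf{l})\dash hwMod$ with a cellular model structure inherited from $\mathscr{S}$ (Section \ref{sbs}) through the forgetful and bar-construction functors built in this section, I would run a transfinite small object argument, iteratively attaching cells along cones of maps out of a set of $\mathfrak{E}$-acyclic generators. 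The highest-weight hypothesis combined with $k_i\ll p$ bounds the weights that can appear and produces a genuine set (not a proper class) of such generators, so the construction converges to a functorial unit $X\to L_\mathfrak{E} X$ which is an $\mathfrak{E}$-equivalence with $\mathfrak{E}$-local target, giving the desired left adjoint.

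The second half is then formal. If $X$ is $\mathfrak{E}$-local and $Q\in (R_2,g_2)\dash hwMod$ satisfies $(R_2,g_2)\dash hwMod_*(Q,f_*E)=0$ for all $E\in\mathfrak{E}$, then by Proposition \ref{pindr} the object $f^*Q$ is $\mathfrak{E}$-acyclic, so $(R_1,g_1)\dash hwMod_*(f^*Q,X)=0$, and applying the adjunction once more yields $(R_2,g_2)\dash hwMod_*(Q,f_*X)=0$; hence $f_*X$ is $f_*\mathfrak{E}$-local and $f_{*,\mathfrak{E}}$ is well defined. For the adjunction itself, given $Y\in f_*\mathfrak{E}\dash(R_2,g_2)\dash hwMod$ and $X\in\mathfrak{E}\dash(R_1,g_1)\dash hwMod$, the chain
\[
(R_1,g_1)\dash hwMod_*(L_\mathfrak{E} f^*Y, X) \;=\; (R_1,g_1)\dash hwMod_*(f^*Y,X) \;=\; (R_2,g_2)\dash hwMod_*(Y, f_*X)
\]
uses the universal property of $L_\mathfrak{E}$ (legitimate because $X$ is $\mathfrak{E}$-local) and then Proposition \ref{pindr}; the outer terms coincide with the graded morphism groups computed in the localized subcategories, since those are full subcategories of the ambient ones.

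The hard part will be the small object argument itself. The ambient category is built via bar constructions over Hecke comonads followed by Bousfield localization at $M\Z/p$, so neither combinatoriality nor the kind of smallness needed for Bousfield's machinery is manifest. The real work is to unwind the cellular description of the ambient category from Sections \ref{sbs}--\ref{salgghc}, use the weight bound coming from the highest-weight hypothesis together with $k_i\ll p$ to exhibit an honest set of cellular generators for the $\mathfrak{E}$-acyclic class, and verify that the transfinite construction converges in this EKMM setting; once that set-theoretic foundation is in place, the rest of the proposition is pure adjunction bookkeeping.
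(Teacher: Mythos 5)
Your proof is correct; the paper's entire proof of this proposition is the single line ``Formal from the definitions,'' and your adjunction bookkeeping in the second half (showing $f_*$ preserves locality via the $(f^*,f_*)$ adjunction of Proposition \ref{pindr}, then chaining the universal property of $L_\mathfrak{E}$ with that adjunction) is exactly the intended formal argument. The only substantive content you add is the recognition that the existence of $L_\mathfrak{E}$ is not literally formal and requires a Bousfield-type small object argument, made feasible by the weight bounds from the highest-weight and $k_i\ll p$ hypotheses --- a point the paper glosses over entirely --- so your sketch is, if anything, more careful than the original.
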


\begin{proof}
Formal from the definitions.
\end{proof}

\vspace{3mm}
\vspace{3mm}
\begin{lemma}\label{llocal1}
If $X\in \mathfrak{E}$ then $X$ is $\mathfrak{E}$-local, i.e. $L_\mathfrak{E}(X)\cong X$.
\end{lemma}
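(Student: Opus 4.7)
The plan is to observe that both assertions follow immediately from the definition of $\mathfrak{E}$-locality and the formal theory of reflective subcategories, so the proof is essentially a two-line unwinding of definitions — there is no substantial obstacle here.

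First I would handle the locality claim. Suppose $X \in \mathfrak{E}$ and suppose $Q$ is an object of $(R_\mathbf{k},\mathbf{p}_\mathbf{l})\dash hwMod$ such that $(R_\mathbf{k},\mathbf{p}_\mathbf{l})\dash hwMod_*(Q,E)=0$ for every $E \in \mathfrak{E}$. Taking $E = X$, which is allowed because $X \in \mathfrak{E}$ by hypothesis, gives $(R_\mathbf{k},\mathbf{p}_\mathbf{l})\dash hwMod_*(Q,X)=0$. This verifies the defining condition for $X$ to be $\mathfrak{E}$-local.

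Next I would deduce $L_\mathfrak{E}(X) \cong X$. By Proposition \ref{papp1}, $L_\mathfrak{E}$ is left adjoint to the inclusion $\iota$ of $\mathfrak{E}$-local objects into $(R_\mathbf{k},\mathbf{p}_\mathbf{l})\dash hwMod$. Since $X$ is $\mathfrak{E}$-local by the previous paragraph, $X$ lies in the image of $\iota$. The unit of the adjunction provides a canonical morphism $\eta_X\colon X \to \iota L_\mathfrak{E}(X)$; applying the universal property of $L_\mathfrak{E}$ to the identity morphism $X \to \iota(X)$ shows that $\eta_X$ admits a retraction in the localized category, and the triangle identity for the adjunction forces $\eta_X$ to be an isomorphism. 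Equivalently, a left adjoint to a fully faithful inclusion acts as the identity (up to canonical isomorphism) on objects of the subcategory, so $L_\mathfrak{E}(X) \cong X$.

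The only thing to watch is that the argument uses $X$ itself as a test object $E$, which is legitimate precisely because membership in $\mathfrak{E}$ is the hypothesis. No assumption about smallness, cofibrancy, or highest-weight bounds on $X$ beyond what is already built into the ambient category is required.
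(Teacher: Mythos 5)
Your proof is correct and is exactly the formal unwinding of definitions that the paper intends (its entire proof is the single word ``Formal''): taking $E=X$ as a test object shows $X$ is $\mathfrak{E}$-local, and the standard fact that a reflector onto a full subcategory is isomorphic to the identity on objects of that subcategory gives $L_\mathfrak{E}(X)\cong X$. Nothing further is needed.
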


\begin{proof}
Formal.
\end{proof}

\vspace{3mm}
In all the cases we will be interested in this paper from now on, we will work under the assumptions
in the beginning of this subsection. We will consider the Borel subgroup $B$ of $
{GL_n}$
of lower triangular matrices, with $\mathscr{O}_{B}$-equivariant
parabolic Lie algebra $\mathbf{b}$ (the model constructed in \rref{eparabphi}). 
We shall work in the $M\Z/p$-localized category of Harish-Chandra pair representations.

At this point, a comment must be made on the $\rho$-shift. It was noted in Subsection \ref{ssverma}
that for $gl_n$, the components of the $\rho$-shift may not be integral, so we may need to add a half-integral multiple of
the determinant weight to obtain integral components. It will be advantageous for us to do this once and 
for all. Let, therefore,
$$\rho^\prime=(n-1,n-2,\dots,0).$$
We shall, from now on, shift by $\rho^\prime$ instead of $\rho$.

The set $\mathfrak{E}_0\subset (\mathscr{O}_H,\mathbf{b})\dash hwMod$ is the set of all
modules 
of $\rho^\prime$-shifted weights
whose coordinates are in $\{0,\dots,k-1\}$ where $k<<p$. All the classes of objects of
$(R,g)\dash hwMod$ we will localize at will then be of the form $\mathfrak{E}=f_*(\mathfrak{E}_0)$ where
$\mathfrak{E}_0$ is as defined above, and $f:(\mathscr{O}_H,\mathbf{b}) \r(R,g)$ is (the coefficients of)
an inclusion of Harish-Chandra pairs.

\vspace{3mm}
In fact, using the same notation, we shall be interested in the specific case where $R=\mathscr{O}_{H}$, 
$g=gl_n$. We then define the category $\mathcal{O}_{n,k}$ (where $k$ is as in the last paragraph)
as the category of 
$(\mathscr{O}_{H},gl_n)$-modules which can be filtered so that the associated graded terms
are $\Z$-suspensions of $f_* S^\lambda$ with $S^\lambda\in \mathfrak{E}_0$ (i.e. co-Verma-modules).
We also denote $\mathfrak{E}=f_*(\mathfrak{E}_0)$.

More generally, we will consider the case where $R=\mathscr{O}_{L}$ where $L$ corresponds to 
the Levi factor of a parabolic $\mathbf{p}\supseteq \mathbf{b}$ 
(always interpreted in the sense of \rref{eparabphi}), which is a parabolic
Lie subalgebra of $gl_n$, and $g=gl_n$. In this case, we define the category 
$\mathcal{O}_{\mathbf{p},n,k}$ as the category of 
$(\mathscr{O}_{L},gl_n)$-modules which can be filtered so the associated graded terms
are $\Z$-suspensions of $f_* S^\lambda$ with $S^\lambda\in \mathfrak{E}_0$ (i.e. generalized 
co-Verma-modules). In this case, we denote $\mathfrak{E}=
\mathfrak{E}_{\mathbf{p}}=f_*(\mathfrak{E}_0)$.

\vspace{3mm}
\begin{theorem}\label{thsduality}
Under the standing assumptions, let 
$$f=\mathscr{O}_\kappa:\mathscr{O}_{L_1}\r\mathscr{O}_{L_0}$$
where
$\kappa:\mathbf{p}_0\r \mathbf{p}_1$ is an inclusion of parabolics in $g$, and $\ell_1, \ell_0$ are
the corresponding Levi factors. Then we have a functor
\beg{eopkappa1}{\kappa^*=L_{\mathfrak{E}_{\mathbf{p}_0}}Res_f:\mathcal{O}_{\mathbf{p}_1,n,k}\r 
\mathcal{O}_{\mathbf{p}_0,n,k}}
whose left derived functor is left adjoint to a right derived functor of
\beg{eopkappa2}{\kappa_*=Ind_f:\mathcal{O}_{\mathbf{p}_0,n,k}\r \mathcal{O}_{\mathbf{p}_1,n,k}.}
Additionally, the left derived functor of \rref{eopkappa1} also has a left adjoint denoted by 
\beg{eopkappa3}{\kappa_\sharp,} 
and we have, on the level of derived functors, 
\beg{eopkappa4}{\kappa_\sharp=\kappa_*[2(dim(\mathbf{p}_1)-dim(\mathbf{p}_0))]}
where 
the functors \rref{eopkappa2} and \rref{eopkappa3} are referred to as {\em right (resp. left) 
Zuckermann functor}.
\end{theorem}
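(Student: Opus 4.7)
The plan has three parts: (i) the adjunction $L\kappa^{\ast} \dashv R\kappa_{\ast}$; (ii) the existence of a further left adjoint $\kappa_{\sharp}$ to $L\kappa^{\ast}$; (iii) the duality identification $\kappa_{\sharp} \simeq R\kappa_{\ast}[2d]$, where $d = \dim(\mathbf{p}_1) - \dim(\mathbf{p}_0)$.

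For (i), the adjunction is essentially formal given the machinery built earlier. Proposition \ref{pindr} produces an adjoint pair $(\mathrm{Res}_f, \mathrm{Ind}_f)$ for the underlying morphism $f = \mathscr{O}_{\kappa}$ of Harish--Chandra pairs on the unrestricted categories of highest-weight modules. The localization Proposition \ref{papp1}, applied with $\mathfrak{E} = \mathfrak{E}_{\mathbf{p}_0}$, then makes the adjunction descend to the localized categories $\mathcal{O}_{\mathbf{p}_i, n, k}$, provided that $\kappa_{\ast} = \mathrm{Ind}_f$ carries the class of generalized co-Verma modules generating $\mathcal{O}_{\mathbf{p}_0, n, k}$ into $\mathcal{O}_{\mathbf{p}_1, n, k}$. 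This last point is precisely the content of the Bruhat-style calculation illustrated in the Example after Proposition \ref{pindr}: under the large-prime hypothesis $p \gg k$, only finitely many weight components of the relevant co-module appear in the given range, and the calculation reduces to its classical characteristic-zero analogue.

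For (ii), the category $\mathcal{O}_{\mathbf{p}_i, n, k}$ is triangulated and compactly generated by the finite set of $\Z$-shifted generalized co-Verma modules associated to weights in $\mathfrak{E}_0$. The functor $L\kappa^{\ast}$ is triangulated and preserves coproducts; hence by Brown representability in the compactly generated setting, a left adjoint $\kappa_{\sharp}$ exists. Alternatively, one can define $\kappa_{\sharp}$ explicitly on the compact generators and extend via the filtration presentation of objects in $\mathcal{O}_{\mathbf{p}_0, n, k}$ by co-Verma modules.

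The main obstacle, and the heart of the theorem, is part (iii). The strategy is to reduce the check to the compact generators $V_{\mathbf{l}, \mathbf{m}, W}$. On these, both $\kappa_{\ast}$ and $\kappa_{\sharp}$ admit explicit descriptions via the Bruhat technique of the Example: under the large-prime hypothesis, the relevant calculation in the $M\Z/p$-localized, weight-bounded category reduces to a purely classical question in characteristic-zero representation theory, namely the standard Bernstein--Zuckermann duality for derived parabolic (co)induction. Classically, $\kappa_{\sharp}(V) \simeq \kappa_{\ast}(V \wedge \Lambda^{\text{top}}\mathbf{n}^{\vee})[d]$ where $\mathbf{n}$ is the nilpotent complement of $\mathbf{p}_0$ in $\mathbf{p}_1$ of complex dimension $d$; in the Elmendorf--Mandell passage to spectra, the one-dimensional character $\Lambda^{\text{top}}\mathbf{n}^{\vee}$ is absorbed into the weight grading, while the homological shift by $d$ in the complex-algebraic picture becomes a shift by $2d$ in the real-graded sphere spectrum degrees. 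Once the equivalence is established on generators, exactness and coproduct preservation propagate it to the full category. The delicate technical issue is lifting the classical isomorphism to a natural equivalence of spectrum-valued functors rather than merely an isomorphism of homotopy groups; this relies on the strong dualizability from Lemma \ref{lpcomplf}, the projection formula of Proposition \ref{pproj}, and the vanishing provided by Theorem \ref{tblocks}, which together ensure that the comparison morphism is well-defined and natural on the nose in the localized derived category.
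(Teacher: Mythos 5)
Your part (i) follows the paper's route (Proposition \ref{pindr} plus the localization formalism of Proposition \ref{papp1}, with the Bruhat-style Example supplying finiteness of $\kappa_*$ on co-Verma modules at large primes), and your guiding idea for part (iii) --- that everything reduces to the classical characteristic-zero Zuckermann duality, with the complex dimension $d=\dim(\mathbf{p}_1)-\dim(\mathbf{p}_0)$ becoming a shift by $2d$ over $S$ --- is exactly the paper's. But there are two genuine gaps. First, you verify that $\kappa_*$ is well defined but never address the corresponding (and harder) point for $\kappa^*$: why does $L_{\mathfrak{E}_{\mathbf{p}_0}}\mathrm{Res}_f$ of an object of $\mathcal{O}_{\mathbf{p}_1,n,k}$ again admit a filtration by generalized co-Verma modules in the prescribed weight range? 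This is where the localization in the definition of $\kappa^*$ actually earns its keep: the paper resolves $f^*(W)$ by the dual Koszul resolution $W'\r h_*h^*(S)\wedge W'$ built from the cones $C\mathbf{p}_0\r C\mathbf{p}_1$, uses the projection formula to split off $W'$, and shows the map to the finitely many filtered pieces $Q$ in the allowed weight range has fiber concentrated in lower weights, so that the map is precisely the $\mathfrak{E}$-localization by Lemma \ref{llocal1}. Without some version of this argument, the functor \rref{eopkappa1} is not known to land in $\mathcal{O}_{\mathbf{p}_0,n,k}$ and the rest of the theorem has no content.

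Second, your existence argument for $\kappa_\sharp$ does not work as stated. Brown representability in the compactly generated setting produces a \emph{right} adjoint from coproduct preservation; to get a \emph{left} adjoint one needs product preservation and a dual representability theorem, and in any case the categories $\mathcal{O}_{\mathbf{p},n,k}$ are built from finitely many co-Verma modules and are not closed under arbitrary coproducts, so neither form of Neeman's theorem applies off the shelf. Your fallback of ``define $\kappa_\sharp$ on generators and extend'' leaves functoriality unaddressed. The paper avoids the existence question entirely by \emph{constructing} $\kappa_\sharp$: one takes the invertible object $\omega=S[2d]$, produces a dualizing morphism $\tau:\kappa_*\omega\r S$ from the rational case via the large-prime hypothesis, sets $\kappa_\sharp(X)=\kappa_*(X\wedge\omega)$, builds the counit from $\tau$ and the projection formula $X\wedge\kappa_*\omega\simeq\kappa_*(\kappa^*X\wedge\omega)$, and checks the triangle identities by the same reduction to characteristic zero. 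This simultaneously proves existence and the identity \rref{eopkappa4}, rather than treating them as separate steps; I recommend restructuring your parts (ii) and (iii) in this way.
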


\begin{proof}
To prove the adjunction between \rref{eopkappa1} and \rref{eopkappa2}, we first show that
the functors are well-defined. In case of \rref{eopkappa2}, we may as well think of $f$ as a morphism
of Harish-Chandra pairs 
\beg{eop1op1}{(\mathscr{O}_{L_0},\mathbf{p}_1)\r(\mathscr{O}_{L_1},\mathbf{p}_1)}
(since the objects both in the source and target of \rref{eopkappa2} are obtained from this context by
applying $\kappa_*$ from $\mathbf{p}_1$ to $gl_n$. Now computing the $\kappa_*$ of \rref{eop1op1} on
a co-Verma module is an extension of the Example at the end of last section: In the range of weights specified,
we obtain a $\mathscr{O}_{L_1}$-comodule which is a wedge of $m$ spheres where $m$ is finite and
equal to the dimension of the Levi factor representation we obtain when applying an analogous construction 
over $\C$. (Over $S$, we shall also refer to this as a {\em finite representation}.)
This proves that \rref{eopkappa2} is well defined. 

To treat \rref{eopkappa1}, once again, we may work with the morphism of Harish-Chandra pairs \rref{eop1op1}
instead, but this time, we shall also consider the morphism
\beg{eop1op2}{g:(\mathscr{O}_{L_0},\mathbf{p}_0)\r (\mathscr{O}_{L_0},\mathbf{p}_1).
}
To resolve $W^\prime=f^*(W)$ where $W$ is a finite representation in terms of co-Verma modules,
consider first 
\beg{eop1op3}{g_*g^*(W^\prime).
}
By the projection formula, \rref{eop1op3} is equivalent to the $(\mathscr{O}_{L_0},\mathbf{p}_1)$-module
\beg{eop1op4}{g_*g^*(S)\wedge W^\prime
}
where $S$ is the trivial comodule. But now letting $Cg$ denote the cone on an $S$-Lie algebra $g$ in
the category of $S$-Lie algebras, we may also consider the morphism of Harish-Chandra pairs
\beg{eop1op5}{h:(\mathscr{O}_{L_0},C\mathbf{p}_0)\r (\mathscr{O}_{L_0},C\mathbf{p}_1),
}
and in particular we have a canonical morphism of $(\mathscr{O}_{L_0},\mathbf{p}_1)$-modules from
\rref{eop1op4} to
\beg{eop1op5a}{h_*h^*(S)\wedge W^{\prime}.
}
Moreover, the composite morphism 
$$W^\prime\r h_*h^*(S)\wedge W^{\prime}$$
is an equivalence (the {\em dual Koszul resolution}). Filtering by the dimensions in the suspended cone 
coordinate, we see that the dual Koszul resolution has a decreasing filtration by co-Verma modules and moreover there is
a morphism 
\beg{eop1op6}{h_*h^* (S)\wedge W^\prime\r Q}
where $Q$ consists of finitely many filtered pieces in the specified weight range, 
and the homotopy fiber $F$ of \rref{eop1op6} is concentrated in lower weights. ($F$ comes from the fact that
$C_\infty S^{-1}$, completed at $p$, has an ``exterior algebra'' part similar as over $\C$, and then additional 
``derived'' parts coming from extended $p$'th powers.)

In any case, this discussion implies that \rref{eop1op6} is localization at $\mathfrak{E}$ by Lemma \rref{llocal1}.

\vspace{3mm}
To prove the adjunction between \rref{eopkappa3} and \rref{eopkappa1}, and \rref{eopkappa4}, we use
formal duality arguments together with calculations which go through because of the finiteness and
large prime hypotheses. 

Specifically, to manufacture a left adjoint out of a right adjoint, we need an invertible object $\omega$ and a 
morphism
\beg{edualizer}{\tau:\kappa_*\omega\r S.
}
In our case, we have
\beg{edualizer1}{\omega = S[2(dim(\mathbf{p}_1)-dim(\mathbf{p}_0)]
}
and the appropriate morphism \rref{edualizer} is constructed from the rational case using the large
prime hypothesis. Next, one proves the projection formula
\beg{edualizer2}{\diagram
\varrho:X\wedge \kappa_*\omega \rto^\sim & \kappa_*(\kappa^*X\wedge \omega).
\enddiagram
}
One then sets
\beg{edualizer3}{
\kappa_\sharp(X)=\kappa_*(X\wedge\omega),
}
and the counit of adjunction is constructed as
\beg{edualizer4}{
\diagram
\kappa_\sharp\kappa^*X\rto^(.4){Id} &
\kappa_*(\kappa^*X\wedge \omega)\rto^(.55){\rho^{-1}} &
X\wedge
\kappa_*\omega\rto^(.6)\tau & X.
\enddiagram
}
Validity of the triangle identities up to isomorphism is a calculation, again, mimicking the rational case
using the large prime hypothesis. 

One thing to note is that in our present setting, the objects $\omega$ and $S$, while they are comodules,
are not objects of the categories $\mathcal{O}_{\mathbf{p},n,k}$. However, smashing with them is an endofunctor
on $\mathcal{O}_{\mathbf{p},n,k}$, which is precisely what we need.
\end{proof}

\vspace{5mm}

\section{Khovanov cube and diagram relations}\label{skhovanov}

At this point, we start considering a smooth projection of an oriented link. We assume (just as \cite{sussan})
that the crossings
are at most double, that they are transverse and that the plane is given a system of Cartesian coordinates 
where in the neighborhood of each crossing, the two crossing strands are in the first and third resp. second and
fourth quadrant when the origin is shifted to the point of the crossing. Furthermore, we assume that the
strands are oriented upward (i.e. from negative to positive in the direction of the $y$-coordinate). 
We also assume that when the projection is tangent to any horizontal line (i.e. line parallel to the $x$ axis), 
then the critical point is non-degenerate (i.e. it is a graph of a function with non-zero second derivative).\vspace{3mm}

\subsection{The basic setup}\label{sssetup}

As in Sussan \cite{sussan}, we will assign to each oriented link projection as above a stable homotopy type 
(more precisely a finite spectrum). More generally, we will assign mathematical entities to oriented tangles projected
into
$$\R\times [a_0,a_1],\; a_0<a_1$$
with crossings as above
where the only points with $y$-coordinate $a_0$ or $a_1$ are the ends, and the ends meet the horizontal lines
transversely. Let
$$n_{a_i}=\sum n(s)
$$
where the sum is over the strands ending on the line $y=a_i$, and $n(s)=1$ resp. $n(s)=k-1$ for
an upward resp. downward oriented strand. Consider the graded  $S$-Lie algebras $g_i=gl_{n_{a_i}}$,
$i=0,1$. Consider the category 
$$\mathcal{O}_{n,k}$$
introduced after Lemma \ref{llocal1}. 
Now consider the parabolic $\mathbf{p}_i\subseteq gl_{n_{a_i}}$ whose Levi factor consists of block sums of matrices
with $1\times 1$ block for every upward strand, and a $(k-1)\times (k-1)$ block for every downward strand. Using 
the machinery of Subsection \ref{ssapprox}, we will consider the parabolic BGG categories
$$\mathcal{O}_{\mathbf{p}_i,i},\; i=0,1$$
with respect to the parabolic $\mathbf{p}_i$ inside the Lie algebra $gl_{n_{a_i}}$. (Throughout, we are working completed
at a large prime.) 

\vspace{3mm}
The ``Khovanov cube'' associated with a tangle projection as described will be an $\mathscr{S}$-enriched functor
\beg{ekhovanov1}{\mathfrak{K}:\mathcal{O}_{\mathbf{p}_0,0}\r\mathcal{O}_{\mathbf{p}_1,1}.
}
The idea is that if there are no input or output strands, the functor would be
$$\mathfrak{K}:\mathscr{S}\r\mathscr{S},$$
which is equivalent to specifying its value on $S$: This is our $sl_k$-Khovanov stable homotopy type. Similarly
as in Sussan \cite{sussan}, we will prove its invariance with respect to the relevant flavor of Reidemeister moves, 
thereby showing that it is a link invariant.

\vspace{3mm}

\subsection{Equivalences of categories}\label{sscaston}

Let $\mathbf{p}\subseteq gl_n$ be a standard parabolic with Levi factor $\ell$ and let 
$$\mathcal{O}_{\mathbf{p},n,k}$$
be the parabolic BGG category of $\mathcal{O}_{n,k}$ with respect to $\mathbf{p}$. Now consider the parabolic 
$\mathbf{q}\subseteq gl_{k+n}$
with Levi factor $gl_k\oplus \ell$. Let $\overline{\mathbf{p}}$ 
also denote the parabolic $\mathbf{b}+\mathbf{p}$ in $gl_{k+n}$ 
(which has Levi factor $h_k\oplus \ell$) and let
$\mathbf{g}_{k,n}$ denote the parabolic in $gl_{k+n}$ with Levi factor $gl_k\oplus gl_n$. 
Denote by 
$$\kappa:\overline{\mathbf{p}}\r\mathbf{q}$$
the inclusion. 

We have a 
canonical projection of Harish-Chandra pairs
$$\pi: (\mathscr{O}_{H_k\times L}, \mathbf{g}_{k,n})\r (\mathscr{O}_{H_k\times L},gl_k\oplus gl_n).$$ 
Denote by $\Delta$ the power of the determinant comodule of $\mathscr{O}_{GL_{k}}$
with weight equal to the difference of the $(\rho^\prime)$'s of $gl_{n+k}$ and $gl_k$.
(Note: all these Lie
algebras are associated with reflexive transitive relations, and hence have versions over $S$.) Let $V$ denote
the $(\mathscr{O}_{H_k},gl_k)$-co-Verma module on the character with $\rho^\prime$-shifted
weight $(k-1,k-2,\dots,0)$. Let 
$$f:(\mathscr{O}_{H_k\times L}, \mathbf{g}_{k,n})\r (\mathscr{O}_{H_k\times L}, gl_{k+n})$$
be the canonical ``inclusion'' of Harish-Chandra pairs. 
Consider
the functor
\beg{eoplusverma}{\psi:f_*\circ
\pi^*\circ ((\Delta\wedge V)\underline{\wedge} ?):\mathcal{O}_{\mathbf{p},n,k}\r 
\mathcal{O}_{\overline{\mathbf{p}},k+n,k}
}
where $\underline{\wedge}$ denotes the ``external smash product'' which from representations of the
Harish-Chandra pairs $(\mathscr{O}_{H_k},gl_k)$ and $(\mathscr{O}_L,gl_n)$
produces a representation of $(\mathscr{O}_{H_k\times L}, gl_{k}\oplus gl_{n})$.

Now denote
\beg{ecastonphi}{\phi=\kappa_*\circ \psi: \mathcal{O}_{\mathbf{p},n,k}\r\mathcal{O}_{
\mathbf{q},k+n,k}.}

\vspace{3mm}
\begin{lemma}\label{lcaston}
The functor $\phi$ induces an equivalence of derived categories.
\end{lemma}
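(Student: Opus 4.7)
The plan is to exhibit a candidate quasi-inverse $\phi'$ built from the adjoints of the constituent functors of $\phi$, and then verify that unit and counit are equivalences by checking on generalized co-Verma modules (which generate $\mathcal{O}_{\mathbf{p},n,k}$ and $\mathcal{O}_{\mathbf{q},k+n,k}$ via the defining filtration), extending to the whole category by a filtration spectral sequence.

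First I would assemble the adjoints. By Theorem \ref{thsduality}, $\kappa_*$ has a left adjoint $\kappa^*=L_{\mathfrak{E}_{\bar{\mathbf{p}}}}\mathrm{Res}_\kappa$; by Proposition \ref{pindr}, restriction $\pi^*$ admits a right adjoint $\pi_*$ and $f_*$ admits a left adjoint $f^*$. Smashing with $\Delta\wedge V$ has an inverse given by smashing with $(\Delta\wedge V)^\vee$: the determinant power $\Delta$ is invertible by Corollary \ref{cchar}, and, under the large-prime hypothesis and the weight restriction defining $\mathfrak{E}_0$, $V$ is a finite $(\mathscr{O}_{H_k},gl_k)$-representation (in the sense appearing in the proof of Theorem \ref{thsduality} and the worked example following Proposition \ref{pindr}), hence strongly dualizable in the $M\Z/p$-localized highest-weight category. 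The candidate inverse is then
\[
\phi' \;=\; \bigl((\Delta\wedge V)^{\vee}\underline{\wedge}\,?\bigr)\circ \pi_* \circ f^* \circ \kappa^*,
\]
which I would verify is right adjoint to $\phi$ by stringing together the individual adjunctions and Proposition \ref{papp1}; the projection formula \rref{edualizer2} identifies the resulting composite counit and unit with natural transformations in the localized categories.

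Next I would check the unit and counit on generators. A generalized co-Verma module $V_\lambda \in \mathcal{O}_{\mathbf{p},n,k}$ is carried by $(\Delta\wedge V)\underline{\wedge}\,?$ to an external smash which, after $\pi^*$ and $f_*$, is a wedge of Levi-factor representations by the explicit Bruhat-type calculation performed in the example at the end of Subsection \ref{sshcc}. Applying $\kappa_*$ then produces a generalized co-Verma module of $\mathcal{O}_{\mathbf{q},k+n,k}$ whose highest weight is the concatenation of the $\rho'$-shifted tuple $(k-1,k-2,\dots,0)$ with $\lambda$, corrected by the $\rho'$-shift on $gl_{k+n}$ (this is precisely where the choice of $\Delta$ enters). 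The composition $\phi'\circ\phi$ on $V_\lambda$ then unwinds, again by the example of Subsection \ref{sshcc}, to $V_\lambda$ itself after the $\mathfrak{E}$-localization is applied; and symmetrically for $\phi\circ\phi'$ on a generalized co-Verma module of $\mathcal{O}_{\mathbf{q},k+n,k}$ in the allowed weight range. All of these computations can be carried out in characteristic zero by the large-prime hypothesis, where the statement reduces to the classical Enright-Shelton-type equivalence of parabolic BGG categories underlying \cite{sussan,bfk}.

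The main obstacle will be the bookkeeping of weights and the precise matching of the $\rho'$-shifts on the two sides, together with verifying that the candidate $\phi'$ really is adjoint to $\phi$ in the localized highest-weight setting (as opposed to in the ambient Harish-Chandra pair category, where $\phi'$ is only a partial adjoint). Once the equivalence is established on generalized co-Verma modules, the extension to arbitrary objects of $\mathcal{O}_{\mathbf{p},n,k}$ is standard: every object admits a finite filtration whose associated graded consists of $\Z$-suspensions of such co-Verma modules, both $\phi$ and $\phi'$ are derived functors and hence preserve cofiber sequences, and an induction on the filtration length together with the triangulated five-lemma shows the unit and counit are equivalences in general.
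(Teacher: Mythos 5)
Your computational core (checking on generalized co-Verma modules, reducing to characteristic zero via the large-prime hypothesis, extending by the defining filtration) is the right content, but the scaffolding you hang it on has a genuine gap. The composite $\phi'=((\Delta\wedge V)^{\vee}\underline{\wedge}\,?)\circ \pi_*\circ f^*\circ\kappa^*$ is \emph{not} an adjoint of $\phi=\kappa_*\circ f_*\circ\pi^*\circ((\Delta\wedge V)\underline{\wedge}\,?)$ on either side: a right adjoint of $\phi$ would have to use the right adjoint of $f_*$ (not its left adjoint $f^*$) and the right adjoint of $\kappa_*$, while a left adjoint would have to use $\pi_\sharp$ rather than $\pi_*$. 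You have mixed left adjoints ($f^*$, $\kappa^*$) with a right adjoint ($\pi_*$), so "stringing together the individual adjunctions" does not produce unit or counit transformations $Id\to\phi'\phi$ or $\phi\phi'\to Id$, and the entire subsequent verification has no natural transformations to verify. Theorem \ref{thsduality} identifies $\kappa_\sharp$ with $\kappa_*$ up to shift only for inclusions of parabolics; no analogous statement is available in the paper for the Levi projection $\pi$ or for $f$, so the handedness mismatch cannot be waved away. Checking objectwise that $\phi'\phi$ returns a co-Verma module to itself does not, by itself, yield an equivalence without such natural maps.

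The paper's argument avoids constructing an inverse altogether: it observes that $\phi$ carries the co-Verma generators of $\mathcal{O}_{\mathbf{p},n,k}$ bijectively onto those of $\mathcal{O}_{\mathbf{q},k+n,k}$ in the allowed weight range (essential surjectivity onto generators), and that $\phi$ induces isomorphisms on the graded $Hom$-sets between these generators --- computed over $\Z_p$, where the large-prime hypothesis reduces everything to the classical characteristic-zero equivalence, and then over $S_p$ the mapping objects are just the $\Z_p$ answer tensored with the stable $p$-stems (full faithfulness). These two facts give the derived equivalence by standard triangulated-category generation arguments. If you want to salvage your draft, replace the quasi-inverse construction with a direct verification that $\phi$ is fully faithful on co-Verma modules; your Bruhat-type computation and filtration induction then do exactly the work required.
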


\begin{proof}
Note that by our large prime hypothesis, the functor $\phi$ defines a bijective correspondence of the
co-Verma modules of $\mathbf{p}$ and of $\mathbf{q}$ in the given weight range. In the given weight range, 
the functor further induces an isomorphism of $Hom$-sets over $\Z_p$ (the $p$-adic numbers), and over $S_p$ they
will just be tensored with the stable $p$-stems.
\end{proof}

\vspace{3mm}
\noindent
{\bf Comment:} The equivalence of derived categories $\phi$ is actually dual to the equivalence used by
Sussan \cite{sussan} in the sense that we use right instead of left adjoints. The reason for this variation is
that we have developed the right adjoint functor $f_*$ to $f^*$ for a morphism of Harish-Chandra pairs, so using 
the right adjoints throughout is easier. The constructions and observations which follow actually do not depend on any
special properties of this equivalence of categories.

\vspace{3mm}
Now using this, following Sussan \cite{sussan}, for a sequence of numbers $n_1,\dots, n_m$, $1\leq n_i\leq k$,
letting $n=\sum n_i$ and letting $n^\prime$ be the sum of all the $n_i$'s which are not equal to $k$,
and letting $\mathbf{p}$ be the parabolic in $gl_n$ with Levi factor
\beg{ecastonlevi}{gl_{n_1}\oplus\dots\oplus gl_{n_m}}
and letting $\mathbf{p}\prime$ be the parabolic in $gl_{n^\prime}$ with Levi factor
\rref{ecastonlevi} modified by omitting all $gl_k$ summands, we shall construct an equivalence of 
categories 
\beg{ecastonlevi1}{\mathcal{O}_{\mathbf{p},n,k}\r\mathcal{O}_{\mathbf{p}^\prime,n^\prime,k}.
}
The equivalence \rref{ecastonlevi1} is constructed in \cite{sussan} by combining sums of equivalences of 
derived categories
of the form
\beg{ecastonrho}{\mathcal{O}_{\mathbf{p}_1,j+k,k}\r \mathcal{O}_{\mathbf{p}_2,k+j,k}
}
where $\mathbf{p}_1$ resp. $\mathbf{p}_2$ is the parabolic with Levi factor $gl_j\oplus gl_k$ resp. $gl_k\oplus gl_j$, followed
by the inverse of the equivalence of Lemma \ref{lcaston}.
We follow the same approach. The equivalence \rref{ecastonrho} is constructed by considering the parabolic
$\mathbf{p}$ with Levi factor $gl_{j}\oplus gl_{k-j}\oplus gl_{j}$ and composing the $\kappa^*$ with
respect to $\mathbf{p}_1$, $\mathbf{p}$ with the $\kappa_\sharp$ with respect to $\mathbf{p}$, 
$\mathbf{p}_2$. Again, \rref{ecastonrho}
is then a derived equivalence by the large prime hypothesis.

\vspace{3mm}
\subsection{Defining the Khovanov cube}\label{sskhovanovcube}

In what follows, we shall use $\kappa$ generically for the type of ``inclusion'' of Harish-Chandra pairs
which occurs in Theorem \ref{thsduality}, when there is no danger of confusion. When more than one such
morphism is in sight, we shall be more specific about the notation.

Now for a projection of a tangle $T$ as above, assume we have assigned a functor
\rref{ekhovanov1} to $T$. Now assume that $n(s)$ and $n(s+1)$ in the sum defining $n_{a_1}$ are
$1$ and $k-1$ (in either order) and assume a tangle projection $T^\prime$ is obtained by
replacing $a_1$ with $a_1+\epsilon$ and joining the $s$'th and $(s+1)$'st strands at $a_1$. Then
the functor \rref{ekhovanov1} for the tangle $T^\prime$ is obtained by taking
the functor \rref{ekhovanov1} for the tangle $T$, then applying the
derived left Zuckermann from $\mathbf{p}_1$ to $\mathbf{p}_1^\prime$ (where $\mathbf{p}_1^\prime$ is the corresponding 
parabolic with respect to $T^\prime$), followed by the equivalence \rref{ecastonlevi1}.

\vspace{3mm}
Symmetrically, if 
$n(s)$ and $n(s+1)$ in the sum defining $n_{a_0}$ are
$1$ and $k-1$ (in either order) and assume a tangle projection $T^\prime$ is obtained by
replacing $a_0$ with $a_0-\epsilon$ and joining the $s$'th and $(s+1)$'st strands at $a_0$. Then
the functor \rref{ekhovanov1} for the tangle $T^\prime$ is obtained by applying the
inverse of the equivalence \rref{ecastonlevi1}, followed by the $\kappa^*$ functor
with respect to $\mathbf{p}_0$ and $\mathbf{p}_0^\prime$, followed by the functor \rref{ekhovanov1} for $T$.

\vspace{3mm}
Now assume that $n(s)=n(s+1)=1$ in the sum defining, say, $n_{a_1}$ for an oriented tangle projection $T$ 
as above and suppose that a tangle projection $T^\prime$ is obtained by replacing $a_1$ with $a_1+\epsilon$,
and adding a crossing between the $s$'th and $(s+1)$'st strand. Consider the parabolic $\mathbf{p}_1^\prime$ whose Levi factor
is generated by the Levi factor of $\mathbf{p}_1$ and a copy of $gl_2$ on the $s$'th and $(s+1)$'st strands. 

If the $s$'th strand at $a_1$ of $T$ crosses above the $(s+1)$'st strand, then assign to $T^\prime$ the
functor \rref{ekhovanov1} obtained from the functor \rref{ekhovanov1} for $T$ followed by the
homotopy cofiber of the counit of adjunction
\beg{ekhovcounit}{\kappa^*\kappa_*\r Id}
where $\kappa$ is with respect to the parabolic subalgebras $\mathbf{p}_1$, $\mathbf{p}_1^\prime$.
 
If the $(s+1)$'st strand of $T$ at $a_1$ crosses above the $s$'th strand, then assign to $T^\prime$ the
functor \rref{ekhovanov1} obtained from the functor \rref{ekhovanov1} for $T$ followed by the homotopy cofiber
of the unit of adjunction
\beg{ekhovunit}{Id\r\kappa^*\kappa_\sharp}
where again, $\kappa$ is with respect to $\mathbf{p}_1$ and $\mathbf{p}_1^\prime$.

\vspace{3mm}
Now completely analogously 
as in Sussan \cite{sussan}, the Reidemeister relations follow from Diagram relations 1-5 of \cite{sussan}. The 
remainder of this paper 
consists of proving these relations. We will give each of the diagrams, taken from \cite{sussan}, as it is proven below.

\vspace{3mm}

\subsection{Diagram relation 2} \label{ssr2}

The statement we need is contained in the following theorem.

\begin{theorem}\label{tr2}
Let $\mathbf{p}\subset \mathbf{q}\subseteq gl_n$ be  parabolic subalgebras where $\mathbf{p}$ has Levi factor 
$\ell=\ell_1\oplus gl_1\oplus gl_1\oplus \ell_2$ and $\mathbf{q}$ 
has Levi factor $m=\ell_1\oplus gl_2\oplus \ell_2$. Then
the functor
$$\kappa^*:\mathcal{O}_{\mathbf{q},n,k}\r\mathcal{O}_{\mathbf{p},n,k}$$
$$\kappa_*,\kappa_\sharp:\mathcal{O}_{\mathbf{p},n,k}\r\mathcal{O}_{\mathbf{q},n,k}$$
satisfy
\beg{tr2e1}{R\kappa_*\cong L\kappa_\sharp[-2],
}
\beg{tr2e2}{L\kappa_\sharp\kappa^*\cong Id\vee Id[2].
}
\end{theorem}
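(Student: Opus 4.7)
The proof is in two parts. For equation (\ref{tr2e1}), I would observe that this follows immediately from Theorem \ref{thsduality}, specifically formula \rref{eopkappa4}. The key observation is a dimension count: passing from $\mathbf{p}$ (with Levi factor containing $gl_1 \oplus gl_1$ in positions $s, s+1$) to $\mathbf{q}$ (with Levi factor containing the $gl_2$ on those positions) adjoins to the parabolic exactly one additional root space, namely the span of the matrix unit $e_{s+1,s}$. Hence $\dim(\mathbf{q}) - \dim(\mathbf{p}) = 1$, so $\omega = S[2]$ in the sense of \rref{edualizer1}, and \rref{eopkappa4} gives $L\kappa_\sharp \cong R\kappa_*[2]$, which is (\ref{tr2e1}).

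For (\ref{tr2e2}), the plan is to use the projection formula to reduce to a single induction-of-trivial-module calculation. By the definition \rref{edualizer3} and the projection formula \rref{edualizer2},
\[
L\kappa_\sharp \kappa^* X \;\cong\; R\kappa_*\bigl(\kappa^* X \wedge \omega\bigr) \;\cong\; X \wedge R\kappa_*(\omega).
\]
Thus it suffices to prove $R\kappa_*(S) \cong S \vee S[-2]$ in $\mathcal{O}_{\mathbf{q},n,k}$; then smashing with $\omega = S[2]$ and with $X$ yields the desired $X \vee X[2]$.

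The main step is therefore the computation of $R\kappa_*(S)$, for which I would reuse the dual Koszul resolution technique already employed in the proof of Theorem \ref{thsduality}. Because the relative nilpotent $\mathbf{q}/\mathbf{p}$ is one-dimensional, the dual Koszul resolution of the trivial $\mathbf{p}$-representation as a $\mathbf{q}$-object has only two nonzero pieces in the allowed weight range (the weights $0$ and $\alpha = e_s - e_{s+1}$), and by the large-prime hypothesis these lift to $S$ and $S[-2]$ respectively. The shift by $[-2]$ is precisely the topological incarnation of the classical fact that over $\mathbb{C}$, coinduction of the trivial $\mathbf{p}$-representation up to $\mathbf{q}$, truncated to allowed weights, computes $H^*(\mathbf{q}/\mathbf{p}; \mathbb{C}) \cong H^*(\mathbb{P}^1; \mathbb{C}) \cong \mathbb{C} \oplus \mathbb{C}[-2]$.

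The hard part will be organizing the bookkeeping for the weight truncations and verifying that no spurious terms appear from higher Dyer-Lashof operations. However, as in Proposition \ref{pproj}, these operations live in weights divisible by $p$, which are excluded by the hypothesis $k \ll p$; combined with the finiteness of the relevant representations (cf. the Example at the end of Subsection \ref{sshcc} and the corresponding discussion in Theorem \ref{thsduality}), this reduces everything to the characteristic zero calculation.
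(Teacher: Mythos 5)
Your proposal is correct and follows essentially the same route as the paper: \rref{tr2e1} as a special case of Theorem \ref{thsduality} with $\dim(\mathbf{q})-\dim(\mathbf{p})=1$, and \rref{tr2e2} via the projection formula, reducing to the single computation $R\kappa_*(S)\simeq S\vee S[-2]$, which is precisely the paper's $\kappa_\sharp\omega^{-1}\simeq S\vee S[-2]$ since $\kappa_\sharp\omega^{-1}=\kappa_*(\omega^{-1}\wedge\omega)=\kappa_*(S)$. Your added sketch of that computation (two-step dual Koszul resolution, comparison with $H^*(\mathbb{P}^1;\C)$ for the Levi flag variety under the large-prime hypothesis) supplies detail the paper leaves implicit and is consistent with its methods.
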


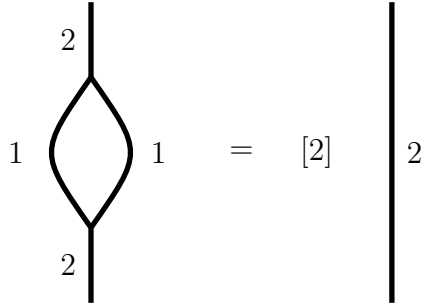
\begin{figure}[H]
\label{relation2}
\begin{tikzpicture}[line width=2pt]
\draw (-2,2) -- (-2,1);
\node at (-2.3, 1.5) {2};
\draw (-2,1) .. controls (-2.7,0) .. (-2,-1);
\node at (-3, 0) {1};
\draw (-2,1) .. controls (-1.3,0) .. (-2,-1);
\node at (-1.1, 0) {1};
\draw (-2,-1) -- (-2,-2);
\node at (-2.3, -1.5) {2};

\node at (0, 0) {=};
\node at (1, 0) {[2]};

\draw (2,2) -- (2,-2);
\node at (2.3, 0) {2};
\end{tikzpicture}
\caption{Diagram Relation 2}
\end{figure}

This statement is represented graphically in Figure 1. Here, the figure is read from top to 
bottom. At the top, the single edge labeled by $2$ corresponds to $\mathcal{O}_{\mathbf{q}, n, k}$, 
as $\mathbf{q}$ has the Levi factor of $gl_2$. Moving 
down, we get two edges labeled by $1$, which correspond to $\mathcal{O}_{\mathbf{p}, n, k}$, 
with the Levi factor of $gl_1 \oplus gl_1$ in $\mathbf{p}$. (The Levi factors $l_1$ adnd $l_2$ are not 
shown in the figure.)
Splitting a strand into two 
represents the functor 
$\kappa^*$, and combining two strands represents the functor $\kappa_{\sharp}$, which goes back to
$\mathcal{O}_{\mathbf{q}, n, k}$.
On the right hand side, the edge that goes straight through represents the identity functor on 
$\mathcal{O}_{\mathbf{q}, n, k}$, and $[2]$ denotes the 
wedge of 2 copies of this identitiy functor, suspended by $0$ and $2$.

\begin{proof}
In fact, \rref{tr2e1} is a special case of Theorem \ref{thsduality}. For \rref{tr2e2}, we can think of
this is an example of Verdier duality in the case of Kan extensions. We need to find an invertible object
$\omega$ in $D\mathcal{O}_{\mathbf{p},n,k}$ and a morphism in $\mathcal{O}_{\mathbf{q},n,k}$ of the form
\beg{etr2e1}{t:S\r \kappa_\sharp\omega^{-1}.
}
The duality morphism is then the morphism in the derived category induced by
\beg{etr2e2}{
\diagram
\kappa_*(X\wedge \omega)\dto^{t\wedge Id}\\
\kappa_\sharp\omega^{-1}\wedge\kappa_*(X\wedge \omega)\dto^\varrho\\
\kappa_\sharp(\omega^{-1}\wedge\kappa^*\kappa_*(X\wedge \omega))
\dto^{\kappa_\sharp(Id\wedge\epsilon)}\\
\kappa_\sharp(\omega^{-1}\wedge X\wedge \omega)\dto^\simeq\\
\kappa_\sharp(X)
\enddiagram
}
where $\varrho$ is the projection formula equivalence
\beg{etr2e3}{(\kappa_\sharp X)\wedge Y\simeq \kappa_\sharp(X\wedge \kappa^* Y).
}
To prove a duality, we must choose $\omega$ and \rref{etr2e1} so that \rref{etr2e2} induces
an isomorphism in the derived category. In the present case, $S$ is the ``trivial representation''
and $\omega=S[2]$. The construction of \rref{etr2e1} is an explicit calculation,
as is \rref{etr2e2} in the case of parabolic co-Verma modules. This proves \rref{tr2e1}.

To prove \rref{tr2e2}, we apply the projection formula
$$X\wedge \kappa_\sharp \omega^{-1}\simeq \kappa_\sharp (\kappa^* X
\wedge \omega^{-1}),$$
and calculate more precisely
$$\kappa_\sharp \omega^{-1}\simeq S\vee S[-2].$$
\end{proof}

\vspace{3mm}
\subsection{Diagram relation 1}\label{ssr1}

The statement for Sussan's Diagram relation 1 in this case is the following theorem.

\begin{theorem}\label{tr1}
Let $\mathbf{p}\subset \mathbf{q}\subseteq gl_n$ be  parabolic subalgebras, where $\mathbf{p}$ has Levi factor 
$\ell=\ell_1\oplus gl_i\oplus gl_j\oplus \ell_2$ where
$\{i,j\}=\{1,k-1\}$ and $\mathbf{q}$ has Levi factor $m=\ell_1\oplus gl_k\oplus \ell_2$. Then
the functors
$$\kappa^*:\mathcal{O}_{\mathbf{q},n,k}\r\mathcal{O}_{\mathbf{p},n,k}$$
$$\kappa_*,\kappa_\sharp:\mathcal{O}_{\mathbf{p},n,k}\r\mathcal{O}_{\mathbf{q},n,k}$$
satisfy
\beg{tr1e1}{R\kappa_*\cong L\kappa_\sharp[-2k+2],
}
\beg{tr1e1}{L\kappa_\sharp\kappa^*\cong Id\vee Id[2]\vee\dots\vee Id[2k-2].
}
\end{theorem}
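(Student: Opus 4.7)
The proof would proceed in close analogy with that of Theorem \ref{tr2}, of which this is the natural generalization from $k=2$ to general $k$.

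For the first equivalence \rref{tr1e1}, my plan is to apply Theorem \ref{thsduality} directly, which gives $L\kappa_\sharp \simeq R\kappa_*[2(\dim \mathbf{q} - \dim \mathbf{p})]$. The dimension difference is computed as follows: $\mathbf{p}$ and $\mathbf{q}$ share the same nilradical outside the $gl_k$ block, and in that block, $\mathbf{q}$ contains all of $gl_k$ while $\mathbf{p}$ contains only $gl_i \oplus gl_j$. The missing off-diagonal blocks have total dimension $2ij = 2 \cdot 1 \cdot (k-1) = 2k-2$, yielding $L\kappa_\sharp \simeq R\kappa_*[2k-2]$, i.e. \rref{tr1e1}.

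For the second equivalence, I would adopt exactly the Verdier-duality argument from Theorem \ref{tr2}: take the dualizing object $\omega = S[2(k-1)]$ from Theorem \ref{thsduality}, and apply the projection formula analogous to \rref{etr2e3}:
$$X \wedge \kappa_\sharp \omega^{-1} \simeq \kappa_\sharp(\kappa^* X \wedge \omega^{-1}) \simeq (\kappa_\sharp \kappa^* X)[2-2k].$$
This reduces the proof of \rref{tr1e1} to the single computation
$$\kappa_\sharp \omega^{-1} \simeq S \vee S[-2] \vee S[-4] \vee \dots \vee S[2-2k],$$
equivalently $\kappa_\sharp S \simeq \bigvee_{j=0}^{k-1} S[2j]$, after which the desired formula follows by smashing with $X$ and reindexing.

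The main obstacle, as in Theorem \ref{tr2}, is this explicit calculation of $\kappa_\sharp$ on the trivial representation. Classically, $L\kappa_\sharp S$ over $\mathbb{C}$ realizes (up to shift) the cohomology of the partial flag variety $GL_k/P_{1,k-1} \cong \mathbb{P}^{k-1}$, whose Betti numbers are exactly $1$ in each even degree $0, 2, \dots, 2k-2$. To transport this to the $S$-Lie algebra setting, I would proceed exactly as in the Example at the end of Subsection \ref{sshcc} and in the proof of Theorem \ref{thsduality}: compute the dual Koszul resolution of $S$ by generalized co-Verma modules, identify its filtered quotients in the given weight range, and verify that under the standing large-prime hypothesis $p \gg k$ the graded pieces and potential Carlsson-type contributions (from $F(C_\infty S,S)$) stabilize to the classical ones, tensored with $S_p$. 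The finiteness of the relevant weights (all $\rho'$-shifted coordinates are bounded by $k-1 < p$) ensures that the extended $p$'th-power corrections do not contribute, so the rational computation is faithfully reproduced in our derived category, yielding the advertised wedge of suspensions of $S$.
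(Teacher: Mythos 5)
Your proposal matches the paper's proof, which is exactly the two-line argument ``analogous to Theorem \ref{tr2}, with $\omega=S[2k-2]$ and the calculation $\kappa_\sharp\omega^{-1}\simeq S\vee S[-2]\vee\dots\vee S[-2k+2]$''; your appeal to Theorem \ref{thsduality} for the shift $2k-2$ and to the Verdier-duality/projection-formula mechanism of Theorem \ref{tr2} is precisely what is intended. Your additional justification of the key calculation via the cohomology of $GL_k/P_{1,k-1}\cong\mathbb{P}^{k-1}$ and the large-prime hypothesis supplies detail the paper leaves implicit, and is consistent with how the paper handles the analogous computations elsewhere.
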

\vspace{2mm}

Here, $Id$ is the identity functor on $\mathcal{O}_{\mathbf{q}, n, k}$. 
Graphically, the relation is represented by Figure 2. Once more, on the right hand side of the figure, $[k]$ denotes
the wedge of $k$ copies of the identity functor, with suspensions by $0, 2, \ldots, 2k-2$. On the left and side, 
the node splitting the edge labeled by $k$ to two edges, labeled by $k-1$ and $1$, represents $\kappa^{\ast}$, 
and the node combining two edges into one represents $\kappa_{\sharp}$.

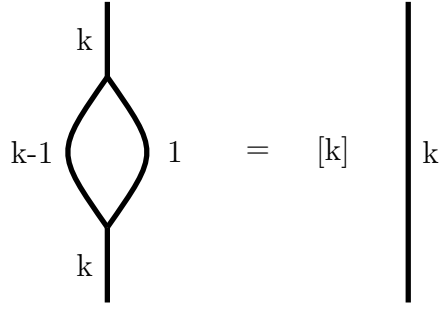
\begin{figure}[H]
\label{relation1}
\begin{tikzpicture}[line width=2pt]
\draw (-2,2) -- (-2,1);
\node at (-2.3, 1.5) {k};
\draw (-2,1) .. controls (-2.7,0) .. (-2,-1);
\node at (-3, 0) {k-1};
\draw (-2,1) .. controls (-1.3,0) .. (-2,-1);
\node at (-1.1, 0) {1};
\draw (-2,-1) -- (-2,-2);
\node at (-2.3, -1.5) {k};

\node at (0, 0) {=};
\node at (1, 0) {[k]};

\draw (2,2) -- (2,-2);
\node at (2.3, 0) {k};
\end{tikzpicture}
\caption{Diagram Relation 1}
\end{figure}

\begin{proof}
Analogous to the proof of Theorem \ref{tr2} with the exception that
$$\omega=S[2k-2],$$
and one calculates
$$\kappa_\sharp \omega^{-1}=S\vee S[-2]\vee\dots\vee S[-2k+2].$$
\end{proof}

\vspace{3mm}
\subsection{Diagram relation 3}\label{ssr3}

Sussan's Diagram relation 3 is represented graphcially by Figure 3. 

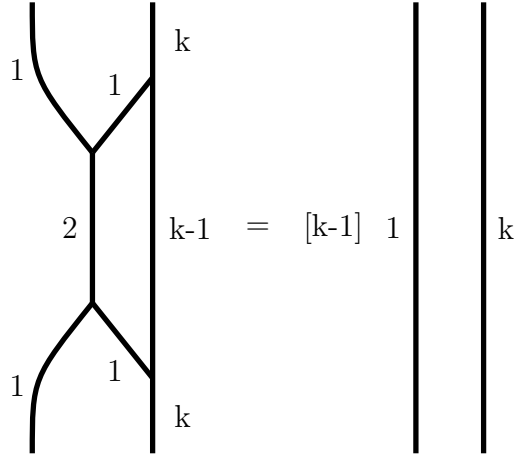
\begin{figure}[H]
\label{relation3}

\begin{tikzpicture}[line width=2pt]
\draw (-1.2, 3) -- (-1.2, -3);
\node at (-0.7, 0) {k-1};
\node at (-0.8, 2.5) {k};
\node at (-0.8, -2.5) {k};
\draw (-1.2, 2) -- (-2, 1);
\node at (-1.7, 1.9) {1};
\draw (-2, 1) -- (-2, -1);
\node at (-2.3, 0) {2};
\draw (-2, -1) -- (-1.2, -2);
\node at (-1.7, -1.9) {1};
\draw (-2.8, 3) .. controls(-2.8, 2) .. (-2, 1);
\node at (-3, 2.1) {1};
\draw (-2, -1) .. controls(-2.8, -2) .. (-2.8, -3);
\node at (-3, -2.1) {1};

\node at (0.2, 0) {=};
\node at (1.2, 0) {[k-1]};

\draw (2.3, 3) -- (2.3, -3);
\node at (2, 0) {1};
\draw (3.2, 3) -- (3.2, -3);
\node at (3.5, 0) {k};
\end{tikzpicture}
\caption{Diagram Relation 3}
\end{figure}

The corresponding statement is the following theorem.

\begin{theorem}\label{tr3}
Let 
\beg{tr3e1}{\mathbf{p}\supset \mathbf{q} \subset \mathbf{p}^\prime} 
be parabolic subalgebras in $gl_n$ with 
Levi factors $\ell_1\oplus gl_1\oplus gl_k\oplus \ell_2$, 
$\ell_1\oplus gl_1\oplus gl_1\oplus gl_{k-1}\oplus \ell_2$,
$\ell_1\oplus gl_2\oplus gl_{k-1}\oplus \ell_2$, respectively. 
Denote the first inclusion $\mathbf{q} \subset \mathbf{p}$ in \rref{tr3e1} by $\kappa$ and the
 second inclusion $\mathbf{q} \subset \mathbf{p}^{\prime}$ in
\rref{tr3e1} by $\nu$. Then 
\beg{tr3e2}{L\kappa_\sharp\nu^*L\nu_{\sharp}\kappa^*\cong
Id[2]\vee Id[4]\vee\dots\vee Id[2k-2].
}
\end{theorem}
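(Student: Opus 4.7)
\Proof proposal. The plan is to mimic the duality strategy of Theorems~\ref{tr2} and \ref{tr1}, reducing the four-fold composition to a two-fold one by introducing an intermediate parabolic and invoking a Mackey-style base change.

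Let $\mathbf{r}\subseteq gl_n$ be the parabolic with Levi factor $\ell_1\oplus gl_{k+1}\oplus\ell_2$. Both $\mathbf{p}$ and $\mathbf{p}'$ sit in $\mathbf{r}$ as parabolic subalgebras, via Harish-Chandra pair morphisms $\alpha\colon\mathbf{p}\to\mathbf{r}$ and $\beta\colon\mathbf{p}'\to\mathbf{r}$ satisfying $\alpha\kappa=\beta\nu$, and $\mathbf{q}=\mathbf{p}\cap\mathbf{p}'$ inside $\mathbf{r}$. This yields a Cartesian-type square of parabolic Harish-Chandra pairs.

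The first step is to establish the Mackey-type base change equivalence $L\kappa_\sharp\nu^*\simeq\alpha^*L\beta_\sharp$ of functors $\mathcal{O}_{\mathbf{p}',n,k}\to\mathcal{O}_{\mathbf{p},n,k}$. The natural transformation is produced by adjunction from the identity $(\alpha\kappa)^*=(\beta\nu)^*$, and I would verify that it is an equivalence by evaluating on generalized co-Verma modules; in the $p$-completed, weight-bounded context, the large-prime hypothesis reduces this verification to the classical Mackey formula for parabolic induction across a Cartesian square of complex Lie subalgebras.

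With the base change in hand, together with the functoriality $L\beta_\sharp L\nu_\sharp=L(\beta\nu)_\sharp=L(\alpha\kappa)_\sharp=L\alpha_\sharp L\kappa_\sharp$, the composite rearranges as
\[
L\kappa_\sharp\nu^*L\nu_\sharp\kappa^*\;\simeq\;\alpha^*L\beta_\sharp L\nu_\sharp\kappa^*\;=\;\alpha^*L\alpha_\sharp\,L\kappa_\sharp\kappa^*.
\]
Theorem~\ref{tr1} applied to $\kappa$ gives $L\kappa_\sharp\kappa^*\simeq\bigvee_{j=0}^{k-1}\mathrm{Id}[2j]$, so it remains to compute the endofunctor $\alpha^*L\alpha_\sharp$ on $\mathcal{O}_{\mathbf{p},n,k}$. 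For this, I would apply the duality $L\alpha_\sharp\simeq R\alpha_*[2k]$ from Theorem~\ref{thsduality} and resolve $\alpha^*R\alpha_*$ by a dual Koszul complex analogous to the one constructed in the proof of that theorem; in $\mathcal{O}_{\mathbf{p},n,k}$, the weight truncation built into localization at $\mathfrak{E}_{\mathbf{p}}$ collapses the resolution to an explicit wedge of shifts, governed rationally by the Poincar\'e polynomial of the projective space $GL_{k+1}/(GL_1\times GL_k)$ restricted to the allowed weight range.

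Combining the two factors and tracking the overall shift $[2k]$ from the duality, the composite becomes a wedge of shifted identities whose graded multiplicity is prescribed by a product of two Poincar\'e-type polynomials. The final step is the combinatorial collapse to exactly $\mathrm{Id}[2]\vee\mathrm{Id}[4]\vee\cdots\vee\mathrm{Id}[2k-2]$; this is the classical $sl_k$ MOY identity corresponding to the quantum integer $[k-1]$ in the rational setting, and by the large-prime hypothesis it transports verbatim to our $S$-theoretic context. The principal obstacles are (i) verifying the base change equivalence operadically, keeping careful track of cofibrant replacements and $M\mathbb{Z}/p$-localization, and (ii) carrying out the shift-tracking in $\alpha^*L\alpha_\sharp\,L\kappa_\sharp\kappa^*$ so that the terms falling outside the weight bound are correctly discarded and exactly $k-1$ summands survive.
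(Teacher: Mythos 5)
Your proposal has a genuine gap at its central step: the Mackey-style base change $L\kappa_\sharp\nu^*\simeq\alpha^*L\beta_\sharp$ is false. For parabolics $\mathbf{p},\mathbf{p}'\subseteq\mathbf{r}$ with $\mathbf{q}=\mathbf{p}\cap\mathbf{p}'$, the composite $\alpha^*L\beta_\sharp$ decomposes (already classically, over $\C$, on co-Verma modules) according to the $(W_{\mathbf{p}},W_{\mathbf{p}'})$-double cosets in $W_{\mathbf{r}}$; in the relevant block $W_{\mathbf{r}}\cong S_{k+1}$, $W_{\mathbf{p}}\cong S_1\times S_k$, $W_{\mathbf{p}'}\cong S_2\times S_{k-1}$, and there are \emph{two} double cosets, so $\alpha^*L\beta_\sharp$ has a second piece beyond the term $L\kappa_\sharp\nu^*$ coming from the trivial coset. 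A decategorified count makes the failure unavoidable: your rearrangement would give
$L\kappa_\sharp\nu^*L\nu_\sharp\kappa^*\simeq\alpha^*L\alpha_\sharp\circ L\kappa_\sharp\kappa^*$, and since $L\kappa_\sharp\kappa^*\simeq\bigvee_{j=0}^{k-1}Id[2j]$ by Theorem \ref{tr1} while the unit $Id\r\alpha^*L\alpha_\sharp$ splits off a copy of the identity, the right-hand side would contain at least $k$ shifted copies of $Id$, contradicting the asserted answer of $k-1$ copies. (For $k=2$, $n=3$ this reads $\alpha^*L\alpha_\sharp(Id\vee Id[2])\cong Id[2]$, which no additive functor containing $Id$ as a retract can satisfy.) The weight truncation built into $L_{\mathfrak{E}}$ does not rescue this, as the extra double-coset term survives in the allowed weight range. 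Consequently the subsequent ``combinatorial collapse'' cannot be carried out as described.

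For comparison, the paper's proof avoids any base change: it applies $L\kappa_\sharp(-)\kappa^*$ to the unit $Id\r\nu^*L\nu_\sharp$ to get a map $L\kappa_\sharp\kappa^*\r L\kappa_\sharp\nu^*L\nu_\sharp\kappa^*$, identifies the source as $\bigvee_j Id[2j]$ via Theorem \ref{tr1}, restricts to the summands $Id[2]\vee\dots\vee Id[2k-2]$, and checks on co-Verma modules (following Sussan) that the restricted map is an equivalence. If you want to salvage your approach, you would have to prove the full two-term Mackey filtration of $\alpha^*L\beta_\sharp$ and identify $L\kappa_\sharp\nu^*$ as the correct associated graded piece, which is considerably more work than the direct argument.
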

\vspace{2mm}

Here, $Id$ denotes the identity functor on $\mathcal{O}_{\mathbf{p}, n, k}$, which is shown on the right 
hand side of the figure by the two edges labelled by $1$ and $k$, and $[k-1]$ denotes the wedge of $k-1$ copies
of this identity functor, with the appropriate suspensions. Each node on the left hand side of the figure represents 
a functor, which goes from the category represented just above the node to the category represented just below the 
node. 

\begin{proof}
Using Theorem \ref{tr1}, we have a unit of adjunction
$$Id[0]\vee\dots\vee Id[2k]\simeq 
L\kappa_\sharp\kappa^*\r
L\kappa_\sharp\nu^*L\nu_{\sharp}\kappa^*.$$
Restrict this morphism to 
$$Id[2]\vee Id[4]\vee\dots\vee Id[2k-2].$$
One verifies \cite{sussan} that the left derived functor of the morphism obtained is an equivalence.
\end{proof}

\vspace{3mm}
\subsection{Diagram relation 4} \label{ssr4}
Consider the diagram of parabolic subalgebras in $gl_n$:
$$\diagram
\mathbf{p}^\prime & \mathbf{p}\lto_\supset^{\kappa} \\
\mathbf{q}\urto_\subset^{\nu}\rto_\subset^{\tau}\dto^\subset_{\sigma} &\mathbf{q}
\prime\\
\mathbf{s}&
\enddiagram
$$
which on Levi factors is
$$
\diagram
\ell_1\oplus gl_k\oplus gl_k\oplus \ell_2 & \ell_1 \oplus gl_k\oplus gl_1\oplus gl_{k-1}\oplus \ell_2\lto_\supset \\
\ell_1\oplus gl_{k-1}\oplus gl_1\oplus gl_1\oplus gl_{k-1}\oplus \ell_2
\urto_\subset\rto_(.55)\subset\dto_\subset &\ell_1\oplus gl_{k-1}\oplus gl_1\oplus gl_k\oplus \ell_2\\
\ell_1\oplus gl_{k-1}\oplus gl_2\oplus gl_{k-1}\oplus \ell_2.&
\enddiagram
$$

Sussan's Diagram relation 4 is given by the following theorem. 

\begin{theorem}\label{tr4}
There is an isomorphism in the derived category
\beg{tr4e1}{
L\nu_\sharp \sigma^*L\sigma_\sharp \tau^*
L\tau_\sharp \sigma^*L\sigma_\sharp \nu^*
\cong 
Id[2k] \vee\kappa^*L\kappa_\sharp[4]\vee\dots\vee 
\kappa^*L\kappa_\sharp[2k-2].
}
\end{theorem}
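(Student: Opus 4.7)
The plan is to adapt the strategy of \cite{sussan} to reduce Diagram Relation 4 to the three previous diagram relations (Theorems \ref{tr1}, \ref{tr2}, \ref{tr3}). Since Theorem \ref{thsduality} already provides the required left and right Zuckermann functors, the adjunctions, and the shift-duality $L\kappa_\sharp \simeq R\kappa_*[2(\dim\mathbf{p}_1-\dim\mathbf{p}_0)]$ in our $S$-linear setting, the combinatorial shape of the argument should transfer, with the large prime hypothesis and Lemma \ref{lpcomplf} ensuring that the spectral computations parallel their classical counterparts.

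First I would factor $\nu = \kappa\circ\nu'$, where $\nu' : \mathbf{q} \hookrightarrow \mathbf{p}$ corresponds to splitting the leftmost $gl_k$ factor of the Levi of $\mathbf{p}$ into $gl_{k-1} \oplus gl_1$. The resulting identities $\nu^* = \nu'^* \kappa^*$ and $L\nu_\sharp = L\kappa_\sharp L\nu'_\sharp$ rewrite the left hand side of \rref{tr4e1} as $L\kappa_\sharp \circ G \circ \kappa^*$, where
\[ G \;=\; L\nu'_\sharp \cdot \sigma^* L\sigma_\sharp \cdot \tau^* L\tau_\sharp \cdot \sigma^* L\sigma_\sharp \cdot \nu'^* \]
is an endofunctor on $D\mathcal{O}_{\mathbf{p}, n, k}$. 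The inner four-step composition $\sigma^* L\sigma_\sharp \tau^* L\tau_\sharp \sigma^* L\sigma_\sharp$ is precisely the configuration considered in Diagram Relation 3: the four Levi slots $(gl_{k-1}, gl_1, gl_1, gl_{k-1})$ of $\mathbf{q}$, together with $\sigma$ merging the middle two into $gl_2$ and $\tau$ merging on the right into $gl_k$, match the configuration of Theorem \ref{tr3} once we sandwich with $\nu'^*$ and $L\nu'_\sharp$. Invoking Theorem \ref{tr3}, together with the projection formula \rref{edualizer2} and the shift duality from Theorem \ref{thsduality}, yields a decomposition of $G$ as a wedge of shifted identity-type pieces and of $\nu'^* L\nu'_\sharp$-type pieces. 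Applying $L\kappa_\sharp\circ(-)\circ\kappa^*$ to this decomposition, and then using the wedge decomposition $L\kappa_\sharp\kappa^* \simeq \bigvee_{j=0}^{k-1} Id[2j]$ of Theorem \ref{tr1}, produces the right hand side of \rref{tr4e1}: after matching of shifts the top piece contributes $Id[2k]$, while the remaining pieces assemble into $\kappa^* L\kappa_\sharp[2j]$ for $j = 2, \ldots, k-1$.

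The main obstacle will be two-fold. First, the bookkeeping of suspensions and direct-sum summands must be tracked very carefully through the five-fold composition, and the coherence of the various projection-formula equivalences and triangle identities \rref{edualizer4} must be checked up to higher homotopy in the stable category, since we do not have a strict commutative smash product on $M\Z/p$-local objects. Second, as in the proofs of Theorems \ref{tr1}, \ref{tr2}, \ref{tr3}, we must verify that the classical direct-sum decomposition lifts to $\mathscr{S}$-enriched functors at a large prime $p$; this is where the finiteness of the co-Verma modules in the restricted graded category $\mathcal{O}_{\mathbf{p},n,k}$ and the strong dualizability from Lemma \ref{lpcomplf} become essential, allowing the entire calculation to be pulled back to its rational counterpart where the analogous identity is established in \cite{sussan}.
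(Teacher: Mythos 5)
Your proposal takes a genuinely different route from the paper, and it has a gap at two points. First, the claimed reduction to Diagram Relation 3 does not go through: Theorem \ref{tr3} concerns the word $L\kappa_\sharp\nu^*L\nu_\sharp\kappa^*$, which in the notation of the present theorem translates to $L\tau_\sharp\,\sigma^*L\sigma_\sharp\,\tau^*$ (the $\tau$-block on the outside, split across the composite, with one $\sigma$-block in the middle), whereas the inner string of the left-hand side of \rref{tr4e1} is $\sigma^*L\sigma_\sharp\,\tau^*L\tau_\sharp\,\sigma^*L\sigma_\sharp$ (two $\sigma$-blocks on the outside, the $\tau$-block in the middle, each occurring in the order ``induce then restrict''). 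These are different words, and since $\sigma$ and $\tau$ both involve the same $gl_1$ slot of the Levi factor of $\mathbf{q}$, the corresponding functors do not commute; re-bracketing to invoke Theorem \ref{tr3} would require a commutation statement of essentially the same depth as the theorem being proved. Second, the final reassembly is structurally inconsistent: applying $L\kappa_\sharp(-)\kappa^*$ to any identity summand of your $G$ yields $L\kappa_\sharp\kappa^*$, which by Theorem \ref{tr1} is a wedge of $k$ shifted \emph{identity} functors, while applying it to a $\nu'^*L\nu'_\sharp$ summand yields $L\kappa_\sharp\nu'^*L\nu'_\sharp\kappa^*$, which is not $\kappa^*L\kappa_\sharp$. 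The right-hand side of \rref{tr4e1} has exactly one identity summand, the rest being copies of $\kappa^*L\kappa_\sharp$ --- the \emph{opposite} composition, which is not a wedge of identities --- so no matching of shifts can make your decomposition agree with it.

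The paper's proof does not use Theorem \ref{tr3} at all. It constructs two explicit morphisms into the left-hand side: one from $Id$ (suitably shifted), namely the unit of the adjunction between $L\tau_\sharp\sigma^*L\sigma_\sharp\nu^*$ and its right adjoint $R\nu_*\sigma^*R\sigma_*\tau^*$, rewritten using the shift dualities $R(?)_*\cong L(?)_\sharp[\cdot]$ of Theorems \ref{tr1} and \ref{tr2}; and one from $\bigvee_j\kappa^*L\kappa_\sharp[2j]$, obtained from the identification $L\nu_\sharp\kappa^*L\kappa_\sharp\nu^*\cong\bigvee_j\kappa^*L\kappa_\sharp[2j]$ (resting on the fact that the $\kappa$-functors commute with the $\nu$-functors, which act on disjoint data) followed by insertion of units of adjunction for the $\sigma$-blocks. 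It then restricts these morphisms to the summands appearing on the right-hand side of \rref{tr4e1} and checks that their sum is an equivalence by evaluating on co-Verma modules, as in Sussan; your closing remarks about the large-prime reduction to the rational case correctly describe that last verification, but the verification presupposes that a candidate map has been constructed, and that is the step your argument does not supply.
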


Here, $Id$ is the identity functor on $\mathcal{O}_{\mathbf{p}, n, k}$. Graphically, the Diagram 
relation is represented by Figure 4. 

\begin{figure}[H]
\label{relation4}
\begin{tikzpicture}[line width= 2pt]

\draw(-1.2, 3) -- (-1.2, -3);
\node at (-0.9, 0) {k};
\node at (-0.8, 1.5) {k-1};
\node at (-0.8, -1.5) {k-1};
\draw(-4, 3) -- (-4, -3);
\node at (-4.2, 2.5) {k};
\node at (-4.2, -2.5) {k};
\node at (-4.4, 0) {k-1};
\draw(-3, 0.8) -- (-1.2, 0.5);
\node at (-2.2, 0.9) {1};
\draw(-3, -0.8) -- (-1.2, -0.5);
\node at (-2.2, -0.9) {1};
\draw (-3, 0.8) .. controls (-3.4, 0) .. (-3, -0.8);
\node at (-3.1, 0) {1};
\draw (-3, 1.7) -- (-3, 0.8);
\node at (-3.2, 1.2) {2};
\draw (-3, -0.8) -- (-3, -1.7);
\node at (-3.2, -1.2) {2};
\draw (-4, 2.2) -- (-3, 1.7);
\node at (-3.5, 2.2) {1};
\draw (-4, -2.2) -- (-3, -1.7);
\node at (-3.5, -2.2) {1};
\draw (-2, 3) .. controls (-2, 2.2) .. (-3, 1.7);
\node at (-2.35, 2.3) {1};
\draw (-2, -3) .. controls (-2, -2.2) .. (-3, -1.7);
\node at (-2.35, -2.3) {1};

\node at (0,0) {=};

\draw (1.2, 3) -- (1.2, -3);
\node at (0.9, 0) {k};
\draw (1.9, 3) -- (1.9, -3);
\node at (1.7, 0) {1};
\draw(2.6, 3) -- (2.6, -3);
\node at (3, 0)  {k-1};

\node at (4.2, 0) {+ \ [k-2]};

\draw (5.4, 3) -- (5.4, -3);
\node at (5.6, 0) {k};
\draw(6, 3) -- (6.4, 1.7);
\node at (6.05, 2.2) {1};
\draw (6.8, 3) -- (6.4, 1.7);
\node at (6.95, 2.2) {k-1};
\draw (6.4, 1.7) -- (6.4, -1.7);
\node at (6.6, 0) {k};
\draw(6, -3) --(6.4, -1.7);
\node at (6.05, -2.2) {1};
\draw(6.8, -3) -- (6.4, -1.7);
\node at (6.95, -2.2) {k-1};

\end{tikzpicture}
\caption{Diagram Relation 4}
\end{figure}
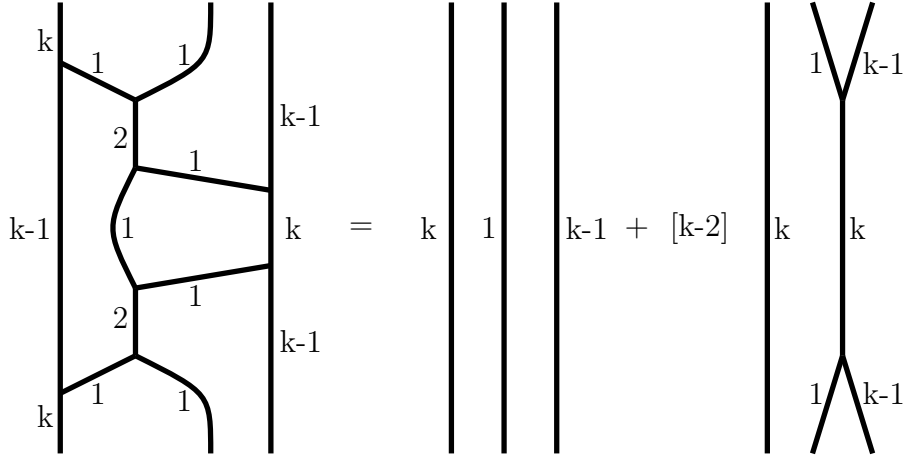

In the right hand side of 
the figure, the identity functor on $\mathcal{O}_{\mathbf{p}, n, k}$ is represented by the 
three edges going straight through (labelled by $k$, $1$ and $k-1$). The part of the right hand side after the plus sign 
denotes the wedge of $k-2$ copies of $\kappa^{\ast}L\kappa_{\sharp}$, each with the appropriate suspension. 

\begin{proof}
We have the unit of adjunction
\beg{etr4e1}{\diagram Id\dto^{\eta}\\
R\nu_* \sigma^*R\sigma_* \tau^*
L\tau_\sharp \sigma^*L\sigma_\sharp \nu^*\dto^\cong\\
L\nu_\sharp \sigma^*L\sigma_\sharp \tau^*
L\tau_\sharp \sigma^*L\sigma_\sharp \nu^*[-2k]
\enddiagram
}
where the bottom arrow follows from Theorems \ref{tr1}, \ref{tr2}. Similarly, we have morphisms
\beg{etr4e2}{\diagram
\kappa^*L\kappa_\sharp[0]\vee\dots\vee 
\kappa^*L\kappa_\sharp[2k-2]\dto^\cong\\
L\nu_\sharp  \kappa^*L\kappa_\sharp\nu^*\dto^{L\nu_\sharp \eta
\kappa^*L\kappa_\sharp\eta\nu^*
}\\
L\nu_\sharp \sigma^*L\sigma_\sharp \tau^*
L\tau_\sharp \sigma^*L\sigma_\sharp \nu^*.
\enddiagram
}
(Note that the left derived pushforwards and pullbacks associated with $\kappa$ commute with
the left derived pushforwards and pullbacks associated with $\nu$.)

Now restricting \rref{etr4e1} and \rref{etr4e2} to the summands which occur 
on the right hand side of \rref{tr4e1} gives the required isomorphism in the derived category. The fact that it is
an isomorphism is verified on co-Verma modules as in \cite{sussan}.
\end{proof}

\vspace{3mm}
\subsection{Diagram relation 5}\label{ssr5}

First consider a diagram of inclusions of parabolic subalgebras of $gl_n$ of the form
$$\diagram
\mathbf{p}_1\rto^\subset_{\nu_1} & \mathbf{q}\\
\mathbf{p}\uto^\subset_{\kappa_1}\rto^\subset_{\kappa_2} & 
\mathbf{p}_2\uto^\subset_{\nu_2}
\enddiagram
$$
with corresponding inclusions of Levi factors
$$\diagram
\ell_1\oplus gl_2\oplus gl_1\oplus\ell_2\rto^\subset & \ell_1\oplus gl_3\oplus \ell_2\\
\ell_1\oplus gl_1\oplus gl_1\oplus gl_1\oplus \ell_2\uto^\subset\rto^\subset 
& \ell_1\oplus gl_1\oplus gl_2\oplus \ell_2.\uto^\subset
\enddiagram
$$

\begin{lemma}\label{lr5}
In the derived category, we have an isomorphism
\beg{lr5e1}{
L(\kappa_1)_\sharp \kappa_2^*L(\kappa_2)_\sharp\kappa_1^*
\cong Id[2]\vee \nu_1^*L(\nu_1)_\sharp.
}
\end{lemma}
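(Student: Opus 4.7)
The plan is to follow the template of Theorems \ref{tr2}, \ref{tr3}, and \ref{tr4}: first construct a candidate natural transformation between the two sides by assembling units and counits of the relevant adjunctions together with the commutativity of the defining square, and then verify that it is a quasi-isomorphism by evaluating on co-Verma modules, where the computation reduces to its characteristic zero shadow by the large-prime hypothesis.

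I will build a map
$$\Phi : Id[2] \vee \nu_1^* L(\nu_1)_\sharp \longrightarrow L(\kappa_1)_\sharp \kappa_2^* L(\kappa_2)_\sharp \kappa_1^*$$
of endofunctors of $\mathcal{O}_{\mathbf{p}_1,n,k}$, one summand at a time. For the summand $Id[2]$, Theorem \ref{tr2} applied to $\kappa_1$ produces an embedding $Id[2] \hookrightarrow L(\kappa_1)_\sharp \kappa_1^*$. Whiskering the unit $\eta_{\kappa_2} : Id \to \kappa_2^* L(\kappa_2)_\sharp$ of the adjunction $L(\kappa_2)_\sharp \dashv \kappa_2^*$ by $L(\kappa_1)_\sharp$ on the left and $\kappa_1^*$ on the right and composing with the above inclusion gives the first component of $\Phi$.

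For the summand $\nu_1^* L(\nu_1)_\sharp$, I exploit the commutative square $\nu_1 \kappa_1 = \nu_2 \kappa_2$, which yields the identifications $\kappa_1^* \nu_1^* = \kappa_2^* \nu_2^*$ of right adjoints and $L(\nu_2)_\sharp L(\kappa_2)_\sharp = L(\nu_1)_\sharp L(\kappa_1)_\sharp$ of left adjoints. Whiskering the Verdier-dual unit $Id[2] \to L(\kappa_1)_\sharp \kappa_1^*$ (obtained from the unit of $\kappa_1^* \dashv R(\kappa_1)_*$ via Theorem \ref{thsduality}) by $\nu_1^* L(\nu_1)_\sharp$, rewriting via the commutative square, and then composing with the counit $L(\nu_2)_\sharp \nu_2^* \to Id$ via a base-change argument produces the second component of $\Phi$.

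Finally, to check that $\Phi$ is an equivalence, I evaluate both sides on a generating set of (generalized) co-Verma modules $V_\lambda$ whose $\rho'$-shifted weights lie in the bounded range of Section \ref{ssapprox}. By the large-prime hypothesis, each side is a finite wedge of suspensions of co-Verma modules whose multiplicities are controlled entirely by the underlying characteristic-zero functors, where the identity is precisely Diagram relation 5 of Sussan \cite{sussan}. The main obstacle is the bookkeeping for the $\nu_1^* L(\nu_1)_\sharp$ summand: since $\nu_1^* L(\nu_1)_\sharp$ further decomposes via Theorem \ref{tr1} applied to $\nu_1$, one has to verify that the candidate map targets the correct summand without introducing extraneous shifts or killing necessary components. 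Once the construction is correctly set up, the verification on co-Verma modules is a formal matching of ranks.
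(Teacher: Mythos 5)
Your overall strategy---assemble a comparison morphism from units, counits and the commutativity $\nu_1\kappa_1=\nu_2\kappa_2$, then verify it is an equivalence on co-Verma modules via the large-prime reduction to the characteristic-zero computation---is the same as the paper's, and your first component $Id[2]\to L(\kappa_1)_\sharp\kappa_2^*L(\kappa_2)_\sharp\kappa_1^*$ (split inclusion from Theorem \ref{tr2} followed by the whiskered unit of $\kappa_2$) is fine. The genuine gap is the second component. You map \emph{into} the left-hand side, but the Beck--Chevalley transformation attached to the square points the other way: the unit of $\nu_2$ and the counit of $\kappa_1$ give $L(\kappa_2)_\sharp\kappa_1^*\to\nu_2^*L(\nu_2)_\sharp L(\kappa_2)_\sharp\kappa_1^*=\nu_2^*L(\nu_1)_\sharp L(\kappa_1)_\sharp\kappa_1^*\to\nu_2^*L(\nu_1)_\sharp$, and sandwiching this with $\kappa_2^*\nu_2^*=\kappa_1^*\nu_1^*$, $L(\kappa_1)_\sharp$ and one more counit produces a morphism \emph{out of} the left-hand side, $L(\kappa_1)_\sharp\kappa_2^*L(\kappa_2)_\sharp\kappa_1^*\to\nu_1^*L(\nu_1)_\sharp$. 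This is exactly what the paper does: both components of its comparison map go from the left side of \rref{lr5e1} to the right side, the $Id[2]$ component coming from $L(\kappa_2)_\sharp\cong R(\kappa_2)_*[2]$ and the counits of $\kappa_2^*\dashv R(\kappa_2)_*$ and $L(\kappa_1)_\sharp\dashv\kappa_1^*$. As written, your recipe does not type-check: whiskering $Id[2]\to L(\kappa_1)_\sharp\kappa_1^*$ by $\nu_1^*L(\nu_1)_\sharp$ and rewriting via the square leaves the adjacent pair $\nu_2^*L(\nu_1)_\sharp$, to which the counit $L(\nu_2)_\sharp\nu_2^*\to Id$ you invoke cannot be applied, and it introduces an extraneous shift $[2]$ on the source. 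A map in your direction can be built, but only by passing to the dual Beck--Chevalley map $\nu_1^*R(\nu_2)_*\to R(\kappa_1)_*\kappa_2^*$ and trading $R_*$ for $L_\sharp$ via Theorem \ref{thsduality}, checking that the resulting shifts $[-4]$ and $[-2]$ cancel the earlier $[2]$; none of this appears in your write-up, and you yourself flag it as unresolved.

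A secondary error: $\nu_1^*L(\nu_1)_\sharp$ does \emph{not} decompose via Theorem \ref{tr1}. Theorems \ref{tr1} and \ref{tr2} decompose the composite in the other order, $L\nu_\sharp\nu^*$; the merge-then-split functor $\nu_1^*L(\nu_1)_\sharp$ is precisely the indecomposable ``projective functor'' that the right-hand side of \rref{lr5e1} records, so the bookkeeping you propose for the verification step starts from a false premise. The verification itself---evaluation on co-Verma modules and comparison with the characteristic-zero answer---is the right idea and is all the paper does once the morphism is correctly in hand.
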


\begin{proof}
The morphism from the left hand side of \rref{lr5e1} to the first summand on the right hand side is the composition
\beg{elr5e0}{\diagram
L(\kappa_1)_\sharp \kappa_2^*(L\kappa_2)_\sharp\kappa_1^*[-2]\dto^\cong\\
L(\kappa_1)_\sharp \kappa_2^*(R\kappa_2)_*\kappa_1^*\dto^\epsilon\\
Id
\enddiagram}
where the top equivalence is by Theorem \ref{tr2} and the bottom arrow is the counit of adjunction.

To construct a morphism from the left hand side of \rref{lr5e1} to the second summand of the right hand side,
we proceed in several steps. First, we have a counit of adjunction
\beg{elr5e1}{L(\kappa_1)_\sharp \kappa_1^*\r Id,
}
so by composition, we obtain a morphism
\beg{elr5e2}{
L(\nu_2)_\sharp L(\kappa_2)_\sharp \kappa_1^*=
L(\nu_1)_\sharp L(\kappa_1)_\sharp \kappa_1^*\r
L(\nu_1)_\sharp.
}
Again, by composition, we obtain a morphism
\beg{elr5e3}{L(\kappa_2)_\sharp \kappa_1^*\r
\nu_2^*L(\nu_2)_\sharp L(\kappa_2)_\sharp \kappa_1^*\r
\nu_2^*L(\nu_1)_\sharp
}
where the first morphism is given by a unit of adjunction. By composition, again, we obtain a morphism
\beg{elr5e4}{
\kappa_2^*L(\kappa_2)_\sharp \kappa_1^*\r
\kappa_2^*\nu_2^*L(\nu_1)_\sharp
=\kappa_1^*\nu_1^*L(\nu_1)_\sharp.
}
By another composition, we then obtain a morphism
\beg{elr5e5}{
L(\kappa_1)_\sharp\kappa_2^*L(\kappa_2)_\sharp \kappa_1^*\r
L(\kappa_1)_\sharp\kappa_1^*\nu_1^*L(\nu_1)_\sharp\r
\nu_1^*L(\nu_1)_\sharp
}
which is a morphism from the left hand side of \rref{lr5e1} to the second summand of the 
right hand side, as needed. Summing with \rref{elr5e0}, 
we obtain a morphism from the left hand side to the right hand side
of \rref{lr5e1}, which is checked to be an isomorphism in the derived category by 
calculation on co-Verma modules.
\end{proof}

\vspace{3mm}
Sussan's Diagram relation 5 is represented graphically by Figure 5.

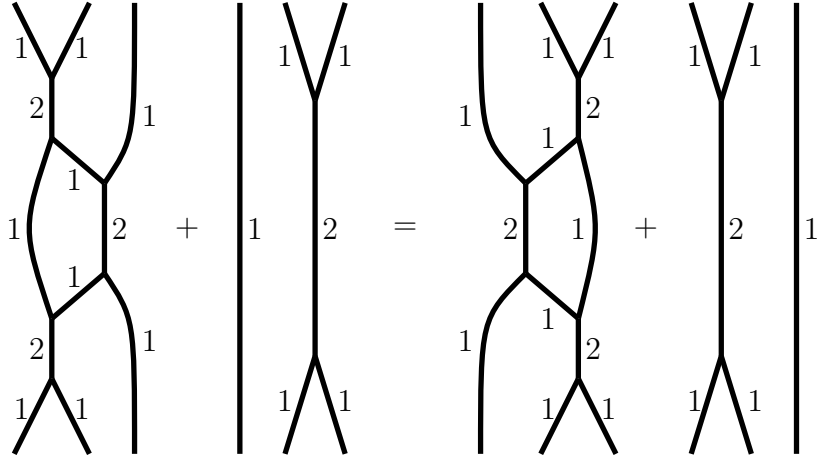
\begin{figure}[H]
\label{relation5}
\begin{tikzpicture}[line width=2pt]

\draw (-3.6, 3) .. controls (-3.6, 1.2) .. (-4, 0.6);
\node at (-3.4, 1.5) {1};
\draw (-4, 0.6) -- (-4, -0.6);
\node at (-3.8, 0) {2};
\draw (-3.6, -3) .. controls (-3.6, -1.2) .. (-4, -0.6);
\node at (-3.4, -1.5) {1};
\draw (-4, 0.6) -- (-4.7, 1.2);
\node at (-4.4, 0.65) {1};
\draw (-4, -0.6) -- (-4.7, -1.2);
\node at (-4.4, -0.65) {1};
\draw (-4.7, 1.2) .. controls (-5.1, 0) .. (-4.7, -1.2);
\node at (-5.2, 0) {1};
\draw (-4.7, 1.2) -- (-4.7, 2);
\node at (-4.9, 1.6) {2};
\draw (-4.7, -1.2) -- (-4.7, -2);
\node at (-4.9, -1.6) {2};
\draw (-4.2, 3) -- (-4.7, 2);
\node at (-4.3, 2.4) {1};
\draw (-5.2, 3) -- (-4.7, 2);
\node at (-5.1, 2.4) {1};
\draw (-4.2, -3) -- (-4.7, -2);
\node at (-4.3, -2.4) {1};
\draw (-5.2, -3) -- (-4.7, -2);
\node at (-5.1, -2.4) {1};

\node at (-2.9, 0) {+};

\draw (-2.2, 3) -- (-2.2, -3);
\node at (-2, 0) {1};
\draw (-1.6, 3) -- (-1.2, 1.7);
\node at (-1.6, 2.3) {1};
\draw (-0.8, 3) -- (-1.2, 1.7);
\node at (-0.8, 2.3) {1};
\draw (-1.2, 1.7) -- (-1.2, -1.7);
\node at (-1, 0) {2};
\draw (-1.6, -3) -- (-1.2, -1.7);
\node at (-1.6, -2.3) {1};
\draw (-0.8, -3) -- (-1.2, -1.7);
\node at (-0.8, -2.3) {1};

\node at (0, 0) {=};

\draw (1, 3) .. controls (1, 1.2) .. (1.6, 0.6);
\node at (0.8, 1.5) {1};
\draw (1.6, 0.6) -- (1.6, -0.6);
\node at (1.4, 0) {2};
\draw (1, -3) .. controls (1, -1.2) .. (1.6, -0.6);
\node at (0.8, -1.5) {1};
\draw (1.6, 0.6) -- (2.3, 1.2);
\node at (1.9, 1.2) {1};
\draw (1.6, -0.6) -- (2.3, -1.2);
\node at (1.9, -1.2) {1};
\draw (2.3, 1.2) .. controls (2.6, 0) .. (2.3, -1.2);
\node at (2.3, 0) {1};
\draw (2.3, 1.2) -- (2.3, 2);
\node at (2.5, 1.6) {2};
\draw (2.3, -1.2) -- (2.3, -2);
\node at (2.5, -1.6) {2};
\draw (1.8, 3) -- (2.3, 2);
\node at (1.9, 2.4) {1};
\draw (2.8, 3) -- (2.3, 2);
\node at (2.7, 2.4) {1};
\draw (1.8, -3) -- (2.3, -2);
\node at (1.9, -2.4) {1};
\draw (2.8, -3) -- (2.3, -2);
\node at (2.7, -2.4) {1};

\node at (3.2, 0) {+};

\draw (4.2, 1.7) -- (4.2, -1.7);
\node at (4.4, 0) {2};
\draw (4.6, 3) -- (4.2, 1.7);
\node at (4.65, 2.3) {1};
\draw(3.8, 3) -- (4.2, 1.7);
\node at (3.85, 2.3) {1};
\draw (4.6, -3) -- (4.2, -1.7);
\node at (4.65, -2.3) {1};
\draw(3.8, -3) -- (4.2, -1.7);
\node at (3.85, -2.3) {1};
\draw (5.2, 3) --(5.2, -3);
\node at (5.4, 0) {1};

\end{tikzpicture}
\caption{Diagram Relation 5}
\end{figure}

 It is expressed by the following theorem.

\begin{theorem}\label{tr5}
We have an isomorphism in the derived category
\beg{tr5e1}{\begin{array}{l}
\kappa_1^*L(\kappa_1)_\sharp\kappa_2^*L(\kappa_2)_\sharp
\kappa_1^*L(\kappa_1)_\sharp \vee
\kappa_2^*L(\kappa_2)_\sharp[2]\cong\\
\kappa_2^*L(\kappa_2)_\sharp\kappa_1^*L(\kappa_1)_\sharp
\kappa_2^*L(\kappa_2)_\sharp \vee
\kappa_1^*L(\kappa_1)_\sharp[2].
\end{array}
}
\end{theorem}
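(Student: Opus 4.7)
The plan is to deduce Theorem \ref{tr5} directly from Lemma \ref{lr5}, applied twice (once as stated, once in its symmetric counterpart), after noting that $\nu_1\kappa_1 = \nu_2\kappa_2$ both realize the same inclusion $\mathbf{p}\subset\mathbf{q}$ of parabolics.

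First, I would record the symmetric analogue of Lemma \ref{lr5}, obtained by interchanging the roles of the two $gl_2$-blocks in the Levi factors:
\[
L(\kappa_2)_\sharp \kappa_1^* L(\kappa_1)_\sharp \kappa_2^* \;\cong\; Id[2] \vee \nu_2^* L(\nu_2)_\sharp .
\]
The proof is identical to that of Lemma \ref{lr5}, obtained by swapping subscripts $1 \leftrightarrow 2$ throughout the construction of the morphism from units and counits of adjunction.

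Second, I would apply Lemma \ref{lr5} to the central four factors $L(\kappa_1)_\sharp \kappa_2^* L(\kappa_2)_\sharp \kappa_1^*$ occurring in the first wedge summand of the left-hand side of \rref{tr5e1}. This rewrites that summand as
\[
\kappa_1^* L(\kappa_1)_\sharp [2] \;\vee\; \kappa_1^* \nu_1^* L(\nu_1)_\sharp L(\kappa_1)_\sharp .
\]
Using $\nu_1\kappa_1 = \nu_2\kappa_2$ in the second term and adding the remaining $\kappa_2^* L(\kappa_2)_\sharp[2]$ summand, the entire left-hand side of \rref{tr5e1} becomes
\[
\kappa_1^* L(\kappa_1)_\sharp[2] \;\vee\; \kappa_2^* \nu_2^* L(\nu_2)_\sharp L(\kappa_2)_\sharp \;\vee\; \kappa_2^* L(\kappa_2)_\sharp[2] .
\]
Applying the symmetric analogue of Lemma \ref{lr5} (with roles of $1$ and $2$ interchanged) to the first summand of the right-hand side of \rref{tr5e1} yields, by the identical reasoning, exactly the same wedge, which establishes the desired isomorphism.

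The main obstacle is to ensure that the match obtained is realized by a single coherent isomorphism in the derived category rather than merely an abstract identification of wedge summands. As in the proof of Lemma \ref{lr5} (and of Theorems \ref{tr3} and \ref{tr4}), I would assemble the required natural transformation explicitly from the units and counits underlying Lemma \ref{lr5} and the base change $\nu_1\kappa_1 = \nu_2\kappa_2$, and then verify that its left derived functor is an equivalence by evaluating on generalized co-Verma modules. For this last check, the large-prime hypothesis of Subsection \ref{ssapprox} lets us reduce the calculation to the parallel characteristic-$0$ computation carried out in \cite{sussan}.
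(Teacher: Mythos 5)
Your proposal is correct and follows essentially the same route as the paper: apply Lemma \ref{lr5} to the middle four factors of the triple composite on each side, use $\nu_1\kappa_1=\nu_2\kappa_2$ to identify the resulting $\kappa_i^*\nu_i^*L(\nu_i)_\sharp L(\kappa_i)_\sharp$ terms, and match the remaining shifted summands. The paper's proof is exactly this chain of three isomorphisms (with the symmetric, index-swapped form of Lemma \ref{lr5} used implicitly for the second summand, as you make explicit), so no further comparison is needed.
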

\vspace{2mm}

Note that these compositions of functors begin and end in $\mathcal{O}_{\mathbf{p}, n, k}$, as 
$\mathbf{p}$ has the Levi factor $gl_1 \oplus gl_1 \oplus gl_1$. In the graphic representation, it corresponds to the 
fact that all parts of the figure start and end with three strands, each labeled by $1$.  

\begin{proof}
By Lemma \ref{lr5}, we have isomorphisms in the derived category
$$\begin{array}{l}
\kappa_1^*L(\kappa_1)_\sharp\kappa_2^*L(\kappa_2)_\sharp
\kappa_1^*L(\kappa_1)_\sharp \vee
\kappa_2^*L(\kappa_2)_\sharp[2]\cong\\
\kappa_1^*\nu_1^*L(\nu_1)_\sharp L(\kappa_1)_\sharp \vee
\kappa_1^*L(\kappa_1)_\sharp[2]\vee
\kappa_2^*L(\kappa_2)_\sharp[2]\cong\\
\kappa_2^*\nu_2^*L(\nu_2)_\sharp L(\kappa_2)_\sharp \vee
\kappa_1^*L(\kappa_1)_\sharp[2]\vee
\kappa_2^*L(\kappa_2)_\sharp[2]\cong\\
\kappa_2^*L(\kappa_2)_\sharp\kappa_1^*L(\kappa_1)_\sharp
\kappa_2^*L(\kappa_2)_\sharp \vee
\kappa_1^*L(\kappa_1)_\sharp[2]
\end{array}
$$
as claimed.
\end{proof}

\vspace{10mm}

\end{document}